\numberwithin{equation}{section}
\theoremstyle{plain}
\newtheorem{thm}{Theorem}[section]
\newtheorem{thm-defn}[thm]{Theorem/Definition}
\newtheorem{lemma}[thm]{Lemma}
\newtheorem{prop}[thm]{Proposition}
\newtheorem{cor}[thm]{Corollary}
\newtheorem{conj}[thm]{Conjecture}
\theoremstyle{definition}
\newtheorem{defn}[thm]{Definition}
\newtheorem{example}[thm]{Example}
\theoremstyle{remark}
\newtheorem{rmk}[thm]{Remark}
\newcommand{\A}{{\mathcal A}}
\newcommand{\C}{\mathbb C}
\newcommand{\E}{\mathcal E}
\newcommand{\J}{{\mathcal J}}
\renewcommand{\L}{{\mathcal L}}
\newcommand{\Q}{\mathbb Q}
\newcommand{\R}{\mathbb R}
\renewcommand{\S}{\mathcal S}
\newcommand{\X}{{\mathcal X}}
\newcommand{\Z}{\mathbb Z}
\newcommand{\oo}{\infty}
\begin{document}

\begin{abstract}
The rational ho\-mol\-ogy balls $B_n$ ap\-peared in Fin\-tu\-shel and Stern's ration\-al blow-down construction \cite{FSrbd}. Later, Symington \cite{Sym1}, defined this operation in the symplectic category. In \cite{Kh2}, the author defined the inverse procedure, the symplectic rational blow-up. In this paper, we study the obstructions to symplectically rationally blowing up a symplectic $4$-manifold, i.e. the obstructions to symplectically embedding the rational homology balls $B_n$ into a symplectic $4$-manifold. We prove a theorem and give additional examples which suggest that in order to symplectically embed the rational homology balls $B_n$, for high $n$, a symplectic 4-manifold must at least have a high enough $c_1^2$ as well.
\end{abstract}

\title[Bounds on Embeddings of $\mathbb{Q}$HB in Symplectic 4-manifolds]{Bounds on Embeddings of Rational Homology Balls in Symplectic 4-manifolds}
\author{Tatyana Khodorovskiy}

\maketitle

\section{Introduction}
In 1997, Fintushel and Stern \cite{FSrbd} defined the rational blow-down operation for smooth $4$-manifolds, a generalization of the standard blow-down operation. For smooth $4$-manifolds, the standard blow-down is performed by removing a neighborhood of a sphere with self-intersection $(-1)$ and replacing it with a standard $4$-ball $B^4$. The rational blow-down involves replacing a negative definite plumbing $4$-manifold with a rational homology ball. In order to define it, we first begin with a description of the negative definite plumbing $4$-manifold $C_n$, $n \geq 2$, as seen in Figure~\ref{f:cn}, where each dot represents a sphere, $S_i$, in the plumbing configuration. The integers above the dots are the self-intersection numbers of the plumbed spheres: $[S_1]^2 = -(n+2)$ and $[S_i]^2 = -2$ for $2 \leq i \leq n-1$.

\begin{figure}[ht!]
\labellist
\small\hair 2pt
\pinlabel $-(n+2)$ at 30 4.7
\pinlabel $-2$ at 60 4.7
\pinlabel $-2$ at 90 4.7
\pinlabel $-2$ at 173 4.7
\pinlabel $-2$ at 203 4.7
\pinlabel $S_1$ at 32 2.5 
\pinlabel $S_2$ at 62 2.5 
\pinlabel $S_3$ at 92 2.5 
\pinlabel $S_{n-2}$ at 175 2.5 
\pinlabel $S_{n-1}$ at 205 2.5 
\endlabellist
\centering
\includegraphics[height=30mm, width=120mm]{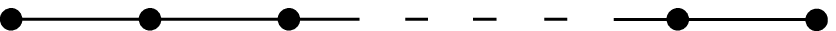}
\caption{{\bf Plumbing diagram of $C_n$, $n\geq2$}}
\label{f:cn}
\end{figure}

The boundary of $C_n$ is the lens space $L(n^2,n-1)$, thus $\pi_1(\partial C_n) \cong H_1(\partial C_n ; \Z) \cong \Z/n^2\Z$. (Note, when we write the lens space $L(p,q)$, we mean it is the $3$-manifold obtained by performing $-\frac{p}{q}$ surgery on the unknot.) This follows from the fact that $[-n-2, -2, \ldots -2]$, with $(n-2)$ many $(-2)$'s is the continued fraction expansion of $\frac{n^2}{1-n}$. (Note, we will often abuse notation and write $C_n$ both for the actual plumbing $4$-manifold and the plumbing configuration of spheres in that $4$-manifold.)

\begin{figure}[ht!]
\labellist
\small\hair 2pt
\pinlabel $n-1$ at -25 240
\pinlabel $n$ at 235 160
\endlabellist
\centering
\includegraphics[scale=0.25]{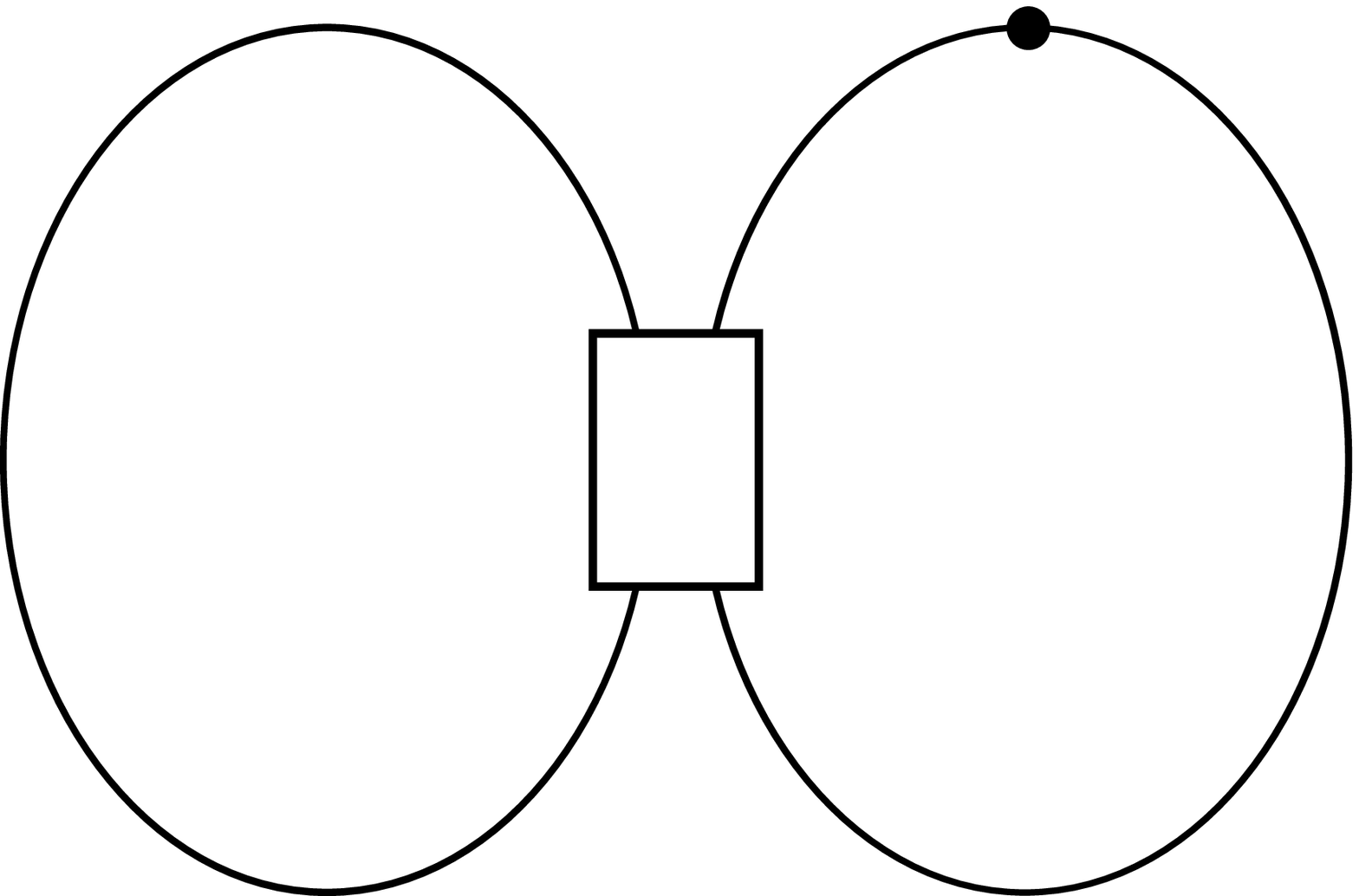}
\caption{{\bf Kirby diagram of $B_n$}}
\label{f:bn}
\end{figure}

Let $B_n$ be the $4$-manifold as defined by the Kirby diagram in Figure~\ref{f:bn} (for a more extensive description of $B_n$, see section~\ref{sec:bn}). The manifold $B_n$ is a rational homology ball, i.e. $H_*(B_n;\Q) \cong H_*(B^4;\Q)$. The boundary of $B_n$ is also the lens space $L(n^2,n-1)$ \cite{CaHa}. Moreover, any self-diffeomorphism of $\partial B_n$ extends to $B_n$ \cite{FSrbd}. Now, we can define the rational blow-down of a $4$-manifold $X$:

\begin{defn}
(\cite{FSrbd}, also see \cite{GS}) Let $X$ be a smooth $4$-manifold. Assume that $C_n$ embeds in $X$, so that $X = C_n \cup_{L(n^2,n-1)} X_0$. The 4-manifold $X_{(n)} = B_n \cup_{L(n^2,n-1)} X_0$ is by definition the \textit{rational blow-down} of $X$ along the given copy of $C_n$.
\end{defn}

Fintushel and Stern \cite{FSrbd} also showed how to compute Seiberg-Witten and Donaldson invariants of $X_{(n)}$ from the respective invariants of $X$. In 1998, Symington \cite{Sym1} proved that the rational blow-down operation can be performed in the symplectic category. More precisely, she showed that if in a symplectic 4-manifold $(M,\omega)$ there is a symplectic embedding of a configuration $C_n$ of symplectic spheres, then there exists a symplectic model for $B_n$ such that the \textit{rational blow-down} of $(M, \omega)$, along $C_n$ is also a symplectic $4$-manifold. In \cite{Kh2}, the author defined the \textit{symplectic rational blow-up} operation, where the symplectic structure of $B_n$ is presented as an entirely standard symplectic neighborhood of a certain Lagrangian 2-cell complex, enabling one to replace the $B_n$ with $C_n$ and obtain a new symplectic 4-manifold. 

The main goal of this paper is is to investigate the following question: \textbf{what are the obstructions to symplectically embedding the rational homology balls $B_n$ into a symplectic $4$-manifold?} Note, in \cite{Kh1}, the author showed that in the smooth category there is little obstruction to embedding a rational homology ball $B_n$:

\begin{thm}
\label{thm:smoothbn}
\cite{Kh1} Let $V_{-4}$ be a neighborhood of a sphere with self-intersec\-tion number $(-4)$. For all $n \geq 3$ odd, there exists an embedding of the rational homology balls $B_n \hookrightarrow V_{-4}$. For all $n \geq 2$ even, there exists an embedding of the rational homology balls $B_n \hookrightarrow B_2 \# \overline{\C P^2}$.
\end{thm}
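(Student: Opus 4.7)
The plan is to construct the embeddings by explicit Kirby calculus. The manifold $B_n$ is specified by Figure~\ref{f:bn} as one $1$-handle together with one $2$-handle whose attaching circle wraps $n$ times through the $1$-handle and carries framing $n-1$. Exhibiting an embedding $B_n \hookrightarrow X$ is equivalent to producing a handle decomposition of $X$ in which the two handles of $B_n$ appear as a subdiagram attached to the $0$-handle, with the remaining handles of $X$ attached on top.

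For odd $n \geq 3$, I would start with the standard handle diagram of $V_{-4}$ (a single $(-4)$-framed $2$-handle on $B^4$) and introduce a cancelling $1$-handle / $2$-handle pair. One then slides the newly introduced $2$-handle over the $(-4)$-framed handle in a controlled sequence of moves, each slide raising both the winding number through the new $1$-handle and the framing, until the new $1$-handle and $2$-handle together realise the $B_n$ subdiagram of Figure~\ref{f:bn}. The original $(-4)$-framed $2$-handle, possibly modified by these slides, then encodes the complement $V_{-4} \setminus \mathrm{int}(B_n)$, together with any leftover pieces of the cancelling setup.

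For even $n \geq 2$, I would take as starting point the standard diagram of $B_2 \# \overline{\C P^2}$, which already contains the $B_2$ diagram (Figure~\ref{f:bn} with $n=2$) plus a $(-1)$-framed unknotted $2$-handle coming from the $\overline{\C P^2}$ summand. I would then slide the $B_2$ $2$-handle over the $(-1)$-framed handle (and possibly over itself) to raise its winding number from $2$ to $n$ and to adjust its framing from $1$ to $n-1$. The parity dichotomy between the two cases of the theorem reflects the parity of the framing change induced by each type of slide: slides over the $(-4)$-handle preserve framing parity, matching the even framing $n-1$ required when $n$ is odd, while slides over the $(-1)$-handle flip parity, matching the odd framing $n-1$ required when $n$ is even.

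The main obstacle is a simultaneous bookkeeping problem. Each handle slide couples the change in winding number with the change in framing, so the construction requires producing a sequence of slides whose cumulative effect is to hit the exact pair $(n,\,n-1)$ prescribed by Figure~\ref{f:bn}. Verifying that such a sequence exists for every admissible $n$, and that the remaining handles faithfully reconstitute $V_{-4}$ or $B_2 \# \overline{\C P^2}$, is the combinatorial heart of the argument; once those are pinned down, the rest is routine handle cancellation and the verification that no extraneous $3$-handles are needed.
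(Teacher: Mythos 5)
This theorem is not proved in the present paper: it is quoted verbatim from \cite{Kh1}, so there is no in-paper argument to measure your proposal against, and it has to stand on its own. As it stands, it does not. What you have written is a plan whose ``combinatorial heart'' --- the explicit sequence of handle moves realizing the pair (winding number $n$, framing $n-1$) and the verification that the leftover handles reconstitute $V_{-4}$ or $B_2 \# \overline{\C P^2}$ --- is exactly the part you defer. That is the entire content of the theorem; without it there is no proof, only a restatement of what a Kirby-calculus proof would have to accomplish.

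More concretely, the mechanism you propose for raising the winding number does not work as described. If you introduce a cancelling $1$-handle/$2$-handle pair away from the $(-4)$-framed unknot, then the $(-4)$-framed attaching circle does not pass through the new $1$-handle; a handle slide of the new $2$-handle over it replaces the attaching circle by a band sum with a push-off of the $(-4)$-framed curve, and since that curve has winding number $0$ through the $1$-handle, the slide leaves the winding number equal to $1$. The same objection applies in the even case: the exceptional $(-1)$-framed unknot of the $\overline{\C P^2}$ summand is unlinked from the $1$-handle of $B_2$, so sliding over it cannot raise the winding number from $2$ to $n$; and ``sliding a handle over itself'' is not a legal Kirby move. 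So the one concrete device in your sketch fails, and only the parity bookkeeping (which is correct but vacuous without the moves themselves) survives. To repair this you must first arrange for the curve you slide over to actually run through the $1$-handle --- equivalently, you must locate inside $V_{-4}$ a disk whose boundary wraps $n$ times around an embedded circle, i.e.\ a copy of the core $L_n$ of Section~\ref{sec:bn}, and take its regular neighborhood. Producing that wrapped disk for every odd $n$ (and its analogue in $B_2 \# \overline{\C P^2}$ for even $n$) is precisely what \cite{Kh1} supplies and what your proposal omits.
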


\noindent Theorem~\ref{thm:smoothbn} above implies that if a smooth $4$-manifold $X$ contains a sphere with self-intersection $(-4)$, then one can smoothly embed the rational homology balls $B_n$ into $X$ for all odd $n \geq 3$. One of the implications of this is that for a given smooth $4$-manifold $X$, there does not exist an $N$, such that for all $n \geq N$ one cannot find a smooth embedding $B_n \hookrightarrow X$. In the setting of this sort in algebraic geometry, for rational homology ball smoothings of certain surface singularities, such a bound on $n$ does exist, in terms of $(c_1^2,\chi_h)$ invariants of an algebraic surface \cite{KoSB,Wa}. Therefore, for the case of symplectic embeddings of the rational homology balls $B_n$, if we model our symplectic manifold such that it resembles a surface of general type, we can make the following conjecture:

\begin{conj}
\label{conj:main}
Let $(X,\omega)$ be a symplectic $4$-manifold, such that:
\begin{itemize}
\item
$b_2^+(X) > 1$ and
\item
$\left[c_1(X,\omega)\right] = -\left[\omega \right]$ as cohomology classes,
\end{itemize}
then there exists an $N$, such that for all $n \geq N$ there does not exist a symplectic embedding $B_n \hookrightarrow (X,\omega)$.
\end{conj}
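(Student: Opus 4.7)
The plan is to suppose for contradiction that a symplectic embedding $B_n\hookrightarrow(X,\omega)$ exists, perform the symplectic rational blow-up of \cite{Kh2} to obtain a closed symplectic 4-manifold $(X',\omega')$ in which $B_n$ has been replaced by the plumbing $C_n$, and then play Seiberg-Witten-theoretic inequalities on $X'$ off against the hypothesis $[c_1(X,\omega)]=-[\omega]$.

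First I would observe that the hypothesis already forces $X$ itself to be symplectically minimal: any symplectic $(-1)$-sphere $E\subset X$ would satisfy $c_1\cdot [E] = 1$ by adjunction but also $c_1\cdot [E] = -\omega(E) < 0$, a contradiction. Together with $b_2^+(X)>1$ this forces $X$ to be neither rational nor ruled, so it is of symplectic Kodaira dimension $2$ with $c_1^2(X)=\omega^2>0$. A routine Novikov-additivity computation, using that $B_n$ is a rational homology ball and $C_n$ is negative definite of rank $n-1$, then yields
\[
b_2^+(X') = b_2^+(X), \qquad \chi_h(X') = \chi_h(X), \qquad c_1^2(X') = c_1^2(X) - (n-1).
\]

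Next, let $(Y,\omega_Y)$ be a symplectic minimal model of $X'$, obtained by symplectically blowing down $k\geq 0$ disjoint exceptional spheres $E_1,\ldots,E_k$. Then $c_1^2(Y)=c_1^2(X')+k$, $b_2^+(Y)>1$, and $Y$ is not rational or ruled. By Taubes, together with the subsequent work of Liu and Ohta-Ono, every such minimal symplectic 4-manifold satisfies $c_1^2(Y)\geq 0$, yielding
\[
k \;\geq\; n - 1 - c_1^2(X).
\]
Moreover, since $X$ is already minimal, no $E_j$ can lie entirely in $X_0 = X\setminus B_n$; every exceptional sphere of $X'$ must meet the glued-in plumbing $C_n$. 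Selecting an $\omega'$-tame $J$ making the $S_i$ and the $E_j$ all pseudo-holomorphic, positivity of intersections gives $[E_j]\cdot[S_i]\geq 0$, while adjunction fixes $c_1(X')\cdot[E_j]=1$, $c_1(X')\cdot[S_1]=-n$, and $c_1(X')\cdot[S_i]=0$ for $i\geq 2$.

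The hard part---and the reason this statement is a conjecture rather than a theorem---is converting these combinatorial constraints into an upper bound on $k$ that is independent of $n$, or at worst depends only on $c_1^2(X)$ and $\chi_h(X)$. Any bound of the form $k \leq F(c_1^2(X),\chi_h(X))$ would, via the displayed inequality, force $n \leq F(c_1^2(X),\chi_h(X)) + c_1^2(X) + 1$, and one could then take $N$ to be one more than that quantity. Plausible tools include a symplectic analog of the Bogomolov--Miyaoka--Yau inequality $c_1^2(Y) \leq 9\chi_h(Y) = 9\chi_h(X)$ (known in the K\"ahler setting, still conjectural symplectically), or a direct pseudo-holomorphic analysis bounding the number of disjoint exceptional spheres that can intersect a rigid $C_n$-plumbing while keeping $b_2^+$ fixed---essentially a symplectic analog of the Koll\'ar--Shepherd-Barron/Wahl bound alluded to in the introduction.
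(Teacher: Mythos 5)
The statement you are proving is Conjecture~\ref{conj:main}, which the paper does not prove; it only establishes the partial result Theorem~\ref{thm:main} under the additional hypothesis $\mathcal{B}as_X = \{\pm c_1(X,\omega)\}$, and even then only for embeddings of type $\A$ or type $\E_k$ with $k \geq c_1^2(X,\omega)+2$. Your proposal is honest about this: the reduction you carry out is correct as far as it goes. The minimality of $X$, the computation $c_1^2(X') = c_1^2(X)-(n-1)$, the inequality $k \geq n-1-c_1^2(X)$ from Taubes/Liu/Ohta--Ono, and the observation that no exceptional sphere of $X'$ can avoid $C_n$ all match steps that appear (in some form) in the paper's argument. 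The gap you name --- bounding the number of exceptional curves meeting $C_n$ independently of $n$ --- is exactly the open content of the conjecture, so there is nothing to ``fix'' there. One smaller technical point you gloss over: the $(-1)$-spheres supplied by Taubes are $J_\epsilon$-holomorphic for a \emph{generic} compatible $J_\epsilon$, not for the particular $J$ adapted to the $C_n$ configuration; the paper's entire Step 1 (Lemma~\ref{l:jholocn}, Gromov compactness, Proposition~\ref{p:grcomp1}, Lemma~\ref{l:lius}) exists to bridge this, extracting an embedded $(-1)$-sphere meeting the $S_i$ positively and transversally from the limit multicurve. You cannot simply ``select'' a tame $J$ making everything holomorphic at once.

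Where your route genuinely diverges from the paper's is in what you do with the exceptional spheres. You pass to a minimal model and try to bound their total number $k$; the paper instead fixes a single $(-1)$-sphere $\Sigma_{-1}$, classifies its intersection pattern $\langle \alpha_1,\dots,\alpha_{n-1}\rangle$ with the spheres of $C_n$ (types $\A$ and $\E_k$), builds the rational class $\gamma = nD + e^2$ in $X$, and derives contradictions two ways: when $c_1\cdot\gamma>0$, an almost-toric visible-surface area computation forces $\omega\cdot\gamma>0$, contradicting $[c_1]=-[\omega]$; when $c_1\cdot\gamma\leq 0$, reflection automorphisms, the generalized adjunction formula for immersed spheres, and the Fintushel--Stern rational blow-down formula manufacture extra Seiberg--Witten basic classes, contradicting $\mathcal{B}as_X=\{\pm c_1\}$. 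That second branch is why the paper needs the extra basic-class hypothesis that your sketch does not mention; without it even the partial result is not established. Finally, the paper's Section~\ref{sec:e2} examples (type $\E_2$ embeddings with $n < 3 + \tfrac{4}{3}c_1^2$) show that any eventual bound $N$ must grow with $c_1^2(X,\omega)$, which is consistent with, and gives some quantitative shape to, the $F(c_1^2,\chi_h)$ you posit.
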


\noindent The condition $\left[c_1(X,\omega)\right] = -\left[\omega \right]$, implies that $(X,\omega)$ does not contain any spheres of self-intersection $(-1)$ or $(-2)$ and $c_1^2(X,\omega) \geq 1$, resembling a surface of general type with an ample canonical divisor. 

We prove a result (Theorem~\ref{thm:main}) that is a first step in proving the above conjecture. We observe that if we impose the condition $n \geq c_1^2(X,\omega) + 2$ on $(X,\omega)$, then if we symplectically rationally blow up a $B_n \hookrightarrow (X,\omega)$, we would obtain a symplectic manifold $(X',\omega')$ for which $c_1^2(X',\omega') \leq -1$. As a consequence of a theorem of Taubes \cite{Ta1,Ta2,Ta4}, we would then obtain, for a generic $\omega$-compatible almost-complex structure $J_{\epsilon}$, a $J_{\epsilon}$-holomorphic embedded sphere $\Sigma_{-1}^{\epsilon}$ with self-intersection $(-1)$. The consequences of the existence of such a sphere in the symplectic rational blow-up $(X',\omega')$ leads to various contradictions of adjunction formulas and results on Seiberg-Witten invariants.

We show that if $(X,\omega)$ is such that $n \geq c_1^2(X,\omega) + 2$ (in addition to the two conditions on $(X,\omega)$ in Conjecture~\ref{conj:main}), then a symplectic embedding $B_n \hookrightarrow (X,\omega)$ will fall into two types: $\A$ and $\E_k$, $2 \leq k \leq n-1$, (see Definitions~\ref{d:bntype},~\ref{d:bntypeA}, ~\ref{d:bntypeEk}). The types $\A$ and $\E_k$ are determined by the intersection patterns of a sphere $\Sigma_{-1}$, with self-intersection $(-1)$ (obtained as consequence of the sphere $\Sigma_{-1}^{\epsilon}$), with the spheres of $C_n \subset (X',\omega')$. We then prove the following theorem:

\begin{thm}
\label{thm:main}
If $B_n \hookrightarrow (X,\omega)$ is a symplectic embedding, where  $(X,\omega)$ is a symplectic $4$-manifold, such that:
\begin{itemize}
\item
$b_2^+(X) > 1$,
\item
$\left[c_1(X,\omega)\right] = -\left[\omega \right]$ as cohomology classes,
\item
$n \geq c_1^2(X,\omega) + 2$ and
\item
$\mathcal{B}as_X = \left\{\pm c_1(X,\omega)\right\}$, ($\mathcal{B}as_X$ denotes the set of Seiberg-Witten basic classes of X,)
\end{itemize}
then it cannot be of type $\A$ or of type $\E_k$, $k \geq c_1^2(X,\omega) + 2$.
\end{thm}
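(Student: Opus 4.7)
The plan is to rationally blow up the given symplectic embedding $B_n\hookrightarrow(X,\omega)$ to produce a symplectic $4$-manifold $(X',\omega')$, apply Taubes' theorem to extract a symplectic $(-1)$-sphere $\Sigma_{-1}\subset X'$, and then exploit its intersection pattern with $C_n$ to derive a contradiction using the adjunction formula and the Seiberg--Witten basic classes hypothesis. Concretely, $X' = C_n \cup_{L(n^2,n-1)} X_0$ with $X_0 = X\setminus B_n$ and $\omega'|_{X_0} = \omega|_{X_0}$, and a standard Euler-characteristic/signature computation (using $\chi(B_n)-\chi(C_n)=1-n$, $\sigma(B_n)-\sigma(C_n)=n-1$) shows that $c_1^2$ decreases by exactly $n-1$ under symplectic rational blow-up. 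The hypothesis $n\geq c_1^2(X,\omega)+2$ thus gives $c_1^2(X',\omega')\leq -1$; combined with $b_2^+(X')=b_2^+(X)>1$, Taubes' $\mathrm{SW}=\mathrm{Gr}$ theorem \cite{Ta1,Ta2,Ta4} then produces, for a generic $\omega'$-compatible $J_\epsilon$, an embedded $J_\epsilon$-holomorphic $(-1)$-sphere $\Sigma_{-1}^{\epsilon}$, and a Gromov compactness argument as $\epsilon\to 0$ yields the symplectic sphere $\Sigma_{-1}$ whose intersection pattern with $C_n$ distinguishes the types $\A$ and $\E_k$.

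For type $\A$, the sphere $\Sigma_{-1}$ is by definition disjoint from $C_n$, hence lies in $X_0\subset X$ and is symplectic with respect to $\omega$ since $\omega'=\omega$ on $X_0$. The adjunction formula in $(X,\omega)$ gives $c_1(X,\omega)\cdot[\Sigma_{-1}]=2+[\Sigma_{-1}]^2=1$, whereas the hypothesis $[c_1(X,\omega)]=-[\omega]$ together with positivity of $[\omega]$ on symplectic curves forces $c_1(X,\omega)\cdot[\Sigma_{-1}]=-[\omega]\cdot[\Sigma_{-1}]<0$. This contradiction rules out type $\A$.

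For type $\E_k$ with $k\geq c_1^2(X,\omega)+2$, the sphere $\Sigma_{-1}$ meets $C_n$ in the pattern prescribed by $\E_k$. The plan is to amalgamate $\Sigma_{-1}$ with an appropriate sub-chain of $C_n$ and iterate symplectic blow-downs of the $(-1)$-spheres that appear at each stage; the bound $k\geq c_1^2(X,\omega)+2$ is precisely what guarantees that $c_1^2$ remains $\leq -1$ throughout, so Taubes' theorem can be reapplied if needed. The iteration is designed to terminate either by isolating a symplectic $(-1)$-sphere contained inside $X_0\subset X$---reducing to the type-$\A$ adjunction contradiction in $(X,\omega)$---or by assembling a new linear symplectic chain whose rational blow-down, via Fintushel--Stern's formula \cite{FSrbd} for the behavior of Seiberg--Witten basic classes, exhibits a basic class of $X$ distinct from $\pm c_1(X,\omega)$, contradicting the hypothesis $\mathcal{B}as_X = \{\pm c_1(X,\omega)\}$.

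The hard part will be the $\E_k$ case: one has to track the bookkeeping of self-intersection numbers through the sequence of blow-downs, verify that each emergent intersection is transverse with no spurious higher-genus components appearing in the Gromov compactification (which would spoil the adjunction identities used), and check that the resulting configuration is of the precise form needed to invoke Fintushel--Stern. The numerical threshold $k\geq c_1^2(X,\omega)+2$ is tuned exactly so that the iteration terminates with either a usable $(-1)$-sphere in $X_0$ or the requisite new rational blow-down configuration before the intermediate $c_1^2$ becomes nonnegative; weakening this bound would cause the argument to collide with a manifold of nonnegative $c_1^2$ where Taubes' theorem no longer produces the needed $(-1)$-sphere.
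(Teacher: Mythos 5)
Your overall setup (rationally blow up, note $c_1^2(X',\omega')=c_1^2(X,\omega)-(n-1)\leq -1$, invoke Taubes and Gromov compactness to get an embedded symplectic $(-1)$-sphere $\Sigma_{-1}$ meeting $C_n$ positively and transversally) matches the paper's Step 1, and your sketch for type $\E_k$ --- iterated blow-downs of successive $(-1)$-spheres, with the threshold $k\geq c_1^2(X,\omega)+2$ keeping $c_1^2\leq -1$ so Taubes can be reapplied, terminating in a configuration whose rational blow-down forces extra Seiberg--Witten basic classes --- is genuinely the same strategy as the paper's Proposition~\ref{p:bnembedek}, though you leave all of the bookkeeping unexecuted. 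However, there is a fatal error in your treatment of type $\A$: you assert that ``for type $\A$, the sphere $\Sigma_{-1}$ is by definition disjoint from $C_n$.'' This is not what Definition~\ref{d:bntypeA} says. Type $\A$ consists of the intersection patterns in which $\Sigma_{-1}$ meets at least one of the $(-2)$-spheres $S_2,\ldots,S_{n-1}$, or meets $S_1$ at least $n$ times, or meets $S_1$ exactly once and nothing else; the all-zero tuple is not in $\A$. Consequently your two-line adjunction argument (that a $(-1)$-sphere living in $X_0\subset X$ contradicts $[c_1(X,\omega)]=-[\omega]$) addresses a case that is not type $\A$ at all, and the actual content of the type $\A$ case is left entirely unproved.

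What is actually required for type $\A$ --- and what the paper spends Steps 2--4 on --- is the following. One constructs a rational class $\gamma=nD+e^2\in H_2(X;\Q)$, where $D=\Sigma_{-1}\cap(X'\setminus N(C_n))$ and $e^2$ is a $2$-cell in $B_n$ capping $n\,\partial D$, and computes $c_1(X,\omega)\cdot\gamma=n-I_{n-1}-2I_{n-2}-\cdots-(n-1)I_1$ from the inverse of the intersection matrix of $C_n$. When this quantity is positive (which, by a reflection-automorphism argument bounding intersections with $(-2)$-spheres, forces exactly one $I_j=1$), an almost-toric visible-surface computation shows $\omega\cdot\gamma>0$, contradicting $[c_1(X,\omega)]=-[\omega]$. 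When it is nonpositive, one needs the generalized adjunction formula for immersed spheres and the Fintushel--Stern descent of basic classes through the rational blow-down (via the order-$n$ condition on $\partial\delta$ in $H_1(L(n^2,n-1);\Z)$) to manufacture basic classes of $X$ beyond $\pm c_1(X,\omega)$. None of these ingredients appear in your proposal, so the type $\A$ half of the theorem is a genuine gap, not merely an unexpanded sketch.
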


\noindent Note, in Theorem~\ref{thm:main} above, the condition $\mathcal{B}as_X = \left\{\pm c_1(X,\omega)\right\}$ on $(X,\omega)$ is also true for surfaces of general type. 

We also describe a family of symplectic manifolds, $\X$, constructed from the elliptic surfaces $E(m)$, which contain an embedded $B_n$ of type $\E_2$ (not covered by Theorem~\ref{thm:main}), in such a way that 
\begin{equation*}
n < 3 + \frac{4}{3}c_1^2(X,\omega) \, ,  
\end{equation*}

\noindent for all $(X,\omega) \subset \X$. Thus, also providing evidence for Conjecture~\ref{conj:main}, that every symplectic manifold has a bound on $n$, above which one can no longer embed a rational homology ball $B_n$. Both Theorem~\ref{thm:main} and this family of examples suggest that in order for there to exist a symplectic embedding $B_n \hookrightarrow (X,\omega)$ for high $n$, the manifold $(X,\omega)$ needs to at least have a high enough $c_1^2(X,\omega)$.

It is worthwhile to note, that obstructions to symplectically embedding the rational homology balls $B_n$, was the subject of some recent research \cite{LeMa}. Their obstructions arose from nonvanishing symplectic cohomology, however, their results do not depend on $n$.

This paper is organized as follows. In section~\ref{sec:background}, we give some brief reviews: the structure of the rational homology balls $B_n$ and the symplectic rational blow-up construction appearing in \cite{Kh2}; Seiberg-Witten invariants and basic classes; and toric and almost-toric fibrations of symplectic 4-manifolds, which is used the proof of the main theorem.

In section~\ref{sec:sympemb}, after separating the symplectic embeddings of $B_n \hookrightarrow (X,\omega)$ into types $\A$ and $\E_k$, we prove the main theorem (Theorem~\ref{thm:main}). We prove Theorem~\ref{thm:main} in four steps, by assuming that there exists a symplectic embedding $B_n \hookrightarrow (X,\omega)$ and obtaining a contradiction. In \textit{Step 1}, section~\ref{sec:step1}, we show that symplectic embeddings of $B_n$ will indeed be of type $\A$ or $\E_k$. In \textit{Step 2}, section~\ref{sec:step2}, we construct a cycle $\gamma$ and compute $c_1(X,\omega) \cdot \gamma$. In \textit{Step 3}, section~\ref{sec:step3}, we show that if $c_1(X,\omega) \cdot \gamma > 0 $ then $\omega \cdot \gamma > 0$, contradicting the $\left[c_1(X,\omega)\right] = -\left[\omega \right]$ assumption. In \textit{Step 4}, section~\ref{sec:step4}, we show that if $c_1(X,\omega) \cdot \gamma \leq 0$, then the condition $\mathcal{B}as_X = \left\{\pm c_1(X,\omega)\right\}$ or the adjunction formula will be violated. Additionally, in section~\ref{sec:e2}, we provide explicit examples of symplectic embeddings of $B_n \hookrightarrow (X,\omega)$ of type $\E_2$, which adhere to Conjecture~\ref{conj:main}.

\section{Background}
\label{sec:background}
\subsection{Description of the rational homology balls $B_n$}
\label{sec:bn}
There are several ways to give a description of the rational homology balls $B_n$. One of them is a Kirby calculus diagram seen in Figure~\ref{f:bn}. This represents the following handle decomposition: Start with a 0-handle, a standard 4-disk $D^4$, attach to it a 1-handle $D^1 \times D^3$. Call the resultant space $X_1$, it is diffeomorphic to $S^1 \times D^3$ and has boundary $\partial X_1 = S^1 \times S^2$. Finally, we attach a 2-handle $D^2 \times D^2$. The boundary of the core disk of the 2-handle gets attached to the closed curve, $K$, in $\partial X_1$ which wraps $n$ times around the $S^1 \times \ast$ in $S^1 \times S^2$. We can also represent $B_n$ by a slightly different Kirby diagram, which is more cumbersome to manipulate but is more visually informative, as seen in Figure~\ref{f:bnsph}, where the 1-handle is represented by a pair of balls.

\begin{figure}[ht!]
\labellist
\small\hair 2pt
\pinlabel $n-1$ at 475 18
\pinlabel $\}n$ at 190 65
\endlabellist
\centering
\includegraphics[scale=0.5]{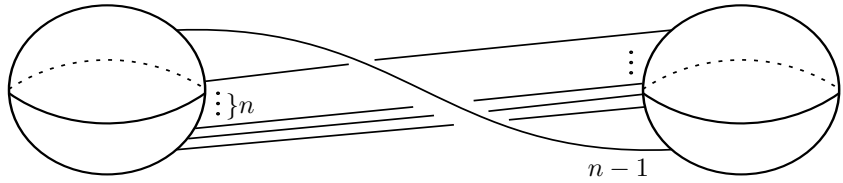}
\caption{{\bf Another Kirby diagram of $B_n$}}
\label{f:bnsph}
\end{figure}

\begin{figure}[ht]
\begin{minipage}[b]{0.45\linewidth}
\centering
\includegraphics[scale=0.30]{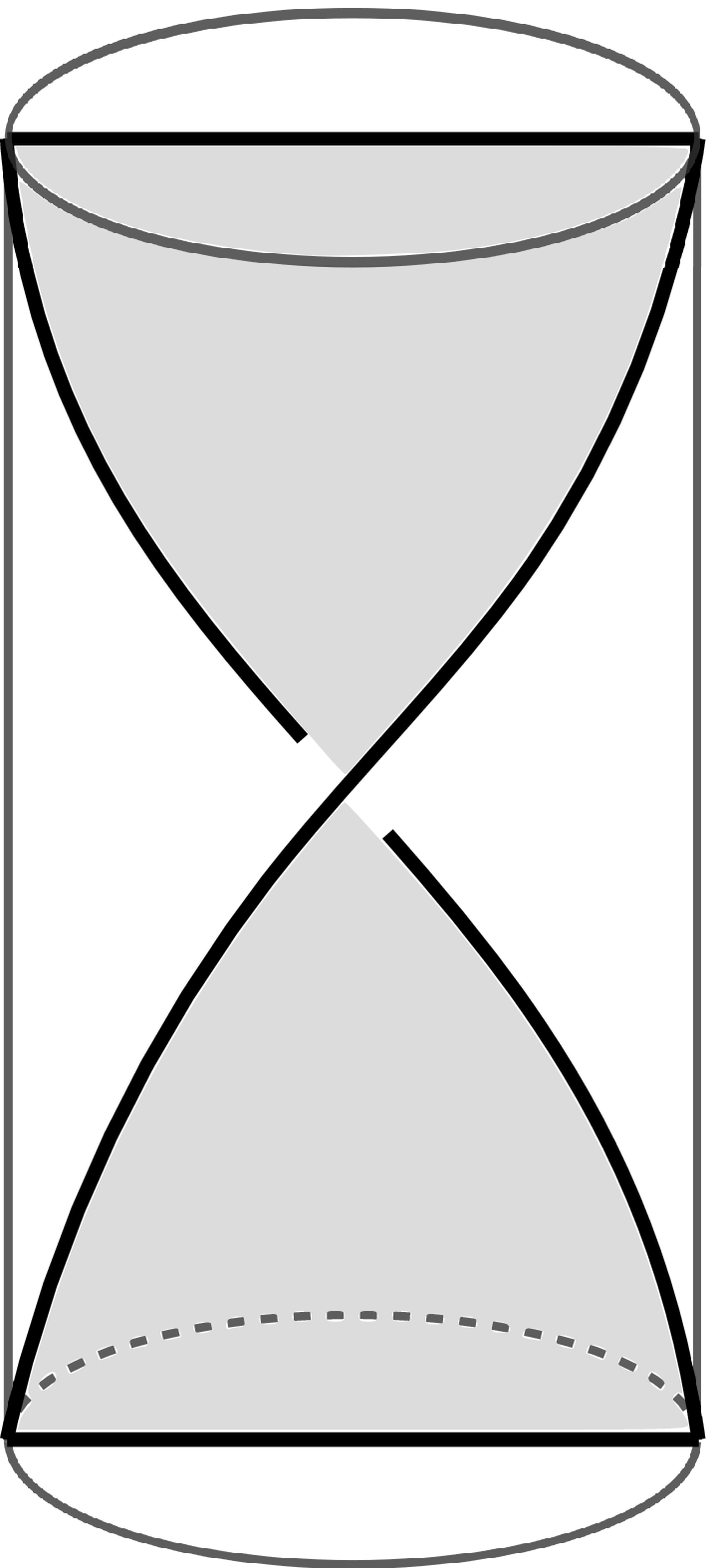}
\caption{$L'_2$}
\label{f:L2}
\end{minipage}
\hspace{0.5cm}
\begin{minipage}[b]{0.45\linewidth}
\centering
\includegraphics[scale=0.30]{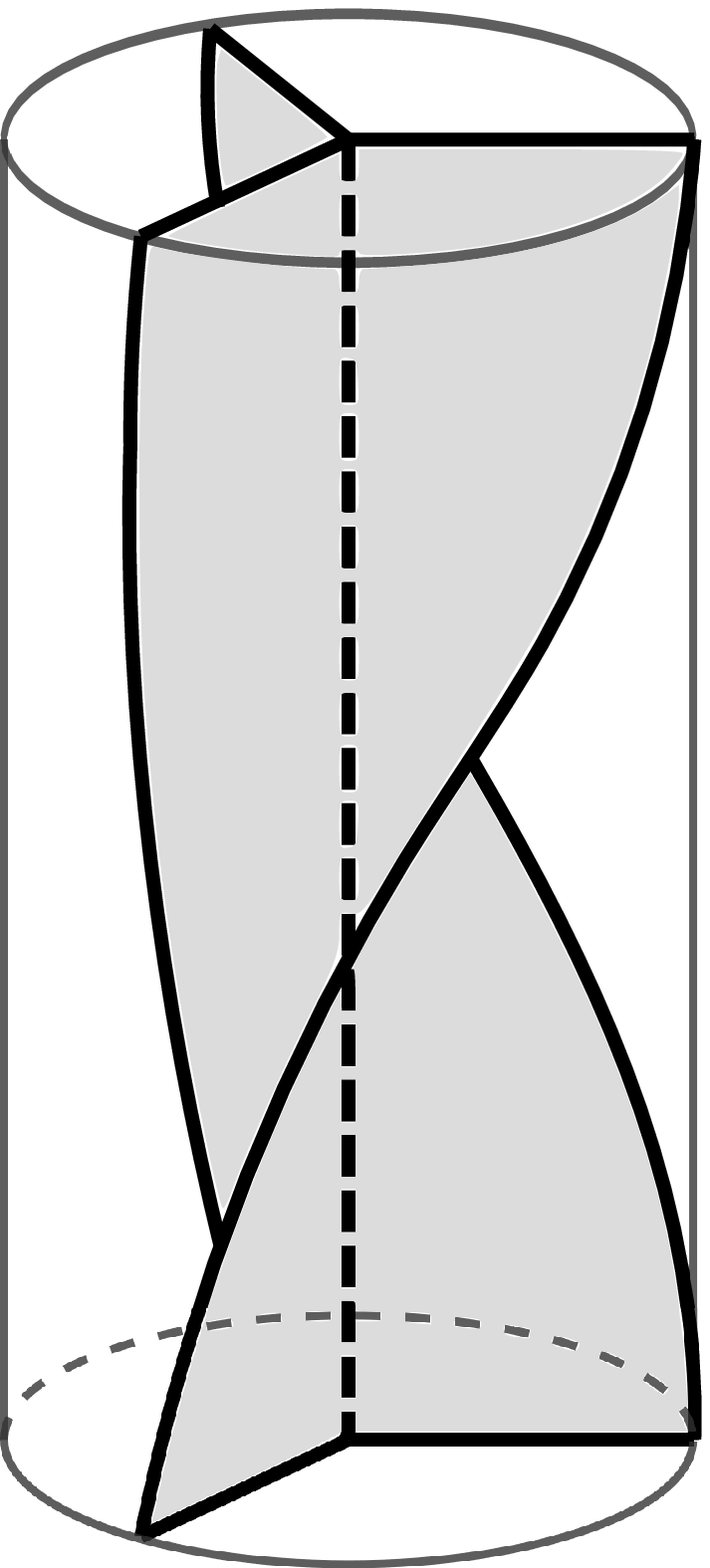}
\caption{$L'_3$}
\label{f:L3}
\end{minipage}
\end{figure}

The rational homology ball $B_2$ can also be described as an unoriented disk bundle over $\mathbb{R}P^2$. Since $\mathbb{R}P^2$ is the union of a Mobius band $M$ and a disk $D$, we can visualize $\mathbb{R}P^2$ sitting inside $B_2$, with the Mobius band and its boundary $(M, \partial M)$ embedded in $(X_1 \cong S^1 \times D^3, \partial X_1 \cong S^1 \times S^2)$ (Figure~\ref{f:L2}, with the ends of the cylinder identified), and the disk $D$ as the core disk of the attaching 2-handle. We will construct something similar for $n\geq 3$. Instead of the Mobius band sitting inside $X_1$, as for $n=2$, we have a ``$n$-Mobius band" (a Moore space), $L'_n$, sitting inside $X_1$. The case of $n=3$ is illustrated in Figure~\ref{f:L3}, again with the ends of the cylinder identified. In other words, $L'_n$ is a singular surface, homotopic to a circle, in $X_1 \cong S^1 \times D^3$, whose boundary is the closed curve $K$ in $\partial X_1 \cong S^1 \times S^2$, and it includes the circle, $S = S^1 \times 0$ in $S^1 \times D^3$. Let $L_n = L'_n \cup_K D$, where $D$ is the core disk of the attached 2-handle (along $K$). We will call $L_n$ the core of the rational homology ball $B_n$; observe, that $L_2 \cong \mathbb{R}P^2$.

These cores $L_n$ were used as geometrical motivation in the construction of a symplectic structure on the rational homology balls $B_n$, in the definition of the symplectic rational blow-up operation \cite{Kh2}. For $n=2$, if we have an embedded $\mathbb{R}P^2$ in $(X,\omega)$, such that $\omega |_{\mathbb{R}P^2} = 0$, (i.e. a Lagrangian $\mathbb{R}P^2$) then the $\mathbb{R}P^2$ will have a totally standard neighborhood, which will be symplectomorphic to the rational homology ball $B_2$. In this vein, for $n \geq 3$, we can define $\L_n$ (labeled $\L_{n,1}$ in \cite{Kh2}) as a cell complex consisting of an embedded $S^1$ and a 2-cell $D^2$, whose boundary ``wraps" $n$ times (winding number) around the embedded $S^1$ (the interior of the 2-cell $D^2$ is an embedding). Furthermore, the cell complex $\L_n$ is embedded in such a way that the 2-cell $D^2$ is Lagrangian. It is shown in \cite{Kh2}, by mirroring the Weinstein Lagrangian embedding theorem, that a symplectic neighborhood of such an $\L_n$ is entirely standard, and is a symplectic model for $B_n$. Therefore, given the existence of such an $\L_n$, we can replace the $B_n$ with $C_n$ and obtain a new symplectic 4-manifold $(X',\omega')$, the symplectic rational blow-up of $(X,\omega)$.

\subsection{Review of Seiberg-Witten invariants and basic classes}
\label{sec:sw}
Here we give a brief overview of Seiberg-Witten invariants and basic classes, and state some relevant results. For a full description of Seiberg-Witten invariants see \cite{Mo}, and for a short overview see \cite{GS}, section 2.4 (which this summary is based on). The Seiberg-Witten invariant is a powerful invariant of smooth manifolds. More precisely, these are invariants of a smooth $4$-manifold together with a $spin^c$ structure. 

We let $X$ be a smooth, closed, oriented 4-manifold, with $b_2^+(X)>1$ odd. Given a $spin^c$ structure $\mathfrak{s}$, we can associate to it a \textit{determinant line bundle} $L$. If $H^2(X;\Z)$ has no $2$-torsion, then the set of $spin^c$ structures of $X$, $\S^c(X)$, is in $1$-$1$ correspondence (via $c_1(L)$) with the set of characteristic elements of $X$, $\mathcal{C}_X$:

\begin{defn}
The set of \textit{characteristic elements} of $X$ (as above) is:
\begin{equation}
\mathcal{C}_X = \left\{K \in H^2(X;\Z) | K \equiv w_2(X) (\text{mod} 2)\right\} \, .
\end{equation}
\end{defn}

We will assume for simplicity of the exposition that $H^2(X;\Z)$ has no $2$-torsion. Let $\mathcal{M}_X^{\delta,g}(K)$ be the moduli space of solutions to certain perturbed monopole equations, where $K \in \mathcal{C}_X$, $g$ is a given metric on $X$ and $\delta \in \Omega^+(X)$ is a perturbation. The moduli space $\mathcal{M}_X^{\delta,g}(K)$ is itself a closed and orientable manifold (for a generic metric $g$) of dimension $\frac{1}{4}(K^2 - (3\sigma(X) + 2\chi(X)))$. In addition, $\mathcal{M}_X^{\delta,g}(K)$ is a subspace of an infinite-dimensional manifold $\mathcal{B}^{\ast}_K$, which is homotopy equivalent to $\C P^{\oo}$, in particular, implying that $H^{\ast}(\mathcal{B}^{\ast}_K ; \Z) \cong \Z[\mu]$ and $[\mathcal{M}_X^{\delta,g}(K)] \in H_{2m}(\mathcal{B}^{\ast}_K ; \Z)$ is a homology class. 

\begin{defn}
For $X$ as above, the \textit{Seiberg-Witten invariant} is $SW_X : \mathcal{C}_X \rightarrow \Z$ is defined by $SW_X(K) = \left\langle \mu^m,[\mathcal{M}_X^{\delta,g}(K)]\right\rangle$, where dim $\mathcal{M}_X^{\delta,g}(K) = 2m$ and if dim $\mathcal{M}_X^{\delta,g}(K) < 0$ then $SW_X(K) = 0$. (If dim $\mathcal{M}_X^{\delta,g}(K)$ is odd then $b^{+}_2(X)$ is even, and we are assuming $b^{+}_2(X)$ is odd.)
\end{defn}

 The \textit{Seiberg-Witten invariant} is $SW_X$ is indeed a diffeomorphism invariant: it does not depend on the choices made in its construction.

\begin{defn}
A cohomology class $K \in \mathcal{C}_X \subset H^2(X;\Z)$ is a \textit{Seiberg-Witten basic class} if $SW_X(K) \neq 0$, and the set of basic classes denoted by $\mathcal{B}as_X$.
\end{defn}

\begin{defn}
A simply connected $4$-manifold is said to be of \textit{simple type} if for each $K \in \mathcal{B}as_X$ we have $K^2 = c_1^2(X) = 3\sigma(X) + 2\chi(X)$ (implying that dim $\mathcal{M}_X^{\delta,g}(K) = 0$).
\end{defn}

Now we will state some useful results of Seiberg-Witten invariants:

The Seiberg-Witten invariants behave very well under blow-ups (\cite{FS2} for general case):

\begin{thm}{\textbf{The blow-up formula}}
\cite{GS}. Let $X$ be a simply connected $4$-manifold of simple type with $\mathcal{B}as_X = \left\{K_i | i = 1, \ldots , s\right\}$. If $X' = X \# \overline{\C P^2}$ is the blow-up of $X$ and $E \in H^2(X';\Z)$ denotes the Poincar\`{e} dual of the homology class $e \in H_2(X',\Z)$ of the exceptional sphere, then the set of basic classes of $X'$ equals $\left\{K_i \pm E | i = 1, \ldots, s\right\}$.
\end{thm}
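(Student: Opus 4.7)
The plan is to reduce the blow-up formula to a dimension count plus a neck-stretching/gluing argument.

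First I would identify the characteristic classes on $X'$. Since $H^2(X';\Z) \cong H^2(X;\Z) \oplus \Z\langle E\rangle$ splits orthogonally under the intersection form with $E^2 = -1$, and since $w_2(\overline{\C P^2}) \equiv E \pmod{2}$, every $K' \in \mathcal{C}_{X'}$ has the form $K'=K+aE$ with $K \in \mathcal{C}_X$ and $a$ odd. In particular $(K+aE)^2 = K^2 - a^2$.

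Next I would perform the dimension count. Using $\chi(X') = \chi(X)+1$ and $\sigma(X') = \sigma(X)-1$, together with the simple type hypothesis $K_i^2 = 3\sigma(X)+2\chi(X)$ for $K_i \in \mathcal{B}as_X$, the moduli space dimension formula gives
\begin{equation*}
\dim \mathcal{M}_{X'}^{\delta,g}(K_i + aE) = \frac{1}{4}\left((K_i+aE)^2 - (3\sigma(X')+2\chi(X'))\right) = \frac{1-a^2}{4}.
\end{equation*}
For $a$ odd this is nonnegative only when $a=\pm 1$, in which case it is zero; so $SW_{X'}(K')$ can only be nonzero for classes of the form $K+\varepsilon E$ with $K\in\mathcal{C}_X$ and $\varepsilon=\pm 1$. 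A parallel count shows that if $K\in\mathcal{C}_X\setminus\mathcal{B}as_X$, then either $\dim \mathcal{M}_X^{\delta,g}(K)<0$, or $SW_X(K)=0$, so only the $K_i\pm E$ can contribute.

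The third and main step is to prove that $SW_{X'}(K_i\pm E) = \pm\, SW_X(K_i)$. I would stretch a long cylindrical neck $S^3\times[-T,T]$ separating the $X$ and $\overline{\C P^2}$ summands in $X'=X\#\overline{\C P^2}$ and study the limits of solutions as $T\to\oo$. Compactness forces any sequence to converge to a pair of finite-energy solutions on $X\setminus B^4$ and $\overline{\C P^2}\setminus B^4$ with matching asymptotic values on $S^3$. The key geometric input is that $\overline{\C P^2}\setminus B^4$ admits a metric of positive scalar curvature, so the Weitzenb\"ock identity forces the spinor on that piece to vanish; the corresponding boundary value problem has essentially a unique solution, which identifies the gluing moduli space for $K_i\pm E$ with $\mathcal{M}_X^{\delta,g}(K_i)$ as oriented $0$-manifolds.

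The principal obstacle is this last step. One must choose the perturbation $\delta$ so as to rule out reducibles on $X'$ (possible since $b_2^+(X') = b_2^+(X) > 1$), establish transversality on both sides of the neck, verify that the gluing map is a diffeomorphism of oriented zero-dimensional moduli spaces, and track the signs attached to the two choices $+E$ and $-E$. Granting this gluing statement, the nonvanishing of $SW_{X'}(K_i\pm E)$ is equivalent to that of $SW_X(K_i)$, which combined with the dimension restriction above yields $\mathcal{B}as_{X'}=\{K_i\pm E \mid i=1,\ldots,s\}$, as claimed.
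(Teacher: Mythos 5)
This theorem is stated in the paper as quoted background (attributed to \cite{GS}, with \cite{FS2} for the general case) and is not proved there, so there is no in-paper argument to compare yours against; I can only assess your outline on its own terms. It is the standard proof and the outline is sound. Your identification of $\mathcal{C}_{X'}$ as $\left\{K+aE : K\in\mathcal{C}_X,\ a \text{ odd}\right\}$ is correct, and the dimension count $\dim\mathcal{M}_{X'}^{\delta,g}(K_i+aE)=\tfrac{1}{4}(1-a^2)$ correctly kills all $|a|\geq 3$ and leaves only the zero-dimensional cases $a=\pm1$. One sentence is slightly off: the claim that a ``parallel count'' rules out contributions from $K\in\mathcal{C}_X\setminus\mathcal{B}as_X$ is circular as written (those $K$ have $SW_X(K)=0$ by definition); what you actually need there is the gluing identity $SW_{X'}(K\pm E)=\pm SW_X(K)$ applied to such $K$, which is the same statement as your third step, not a separate dimension argument. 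The real mathematical content is, as you say, the neck-stretching/gluing step using the positive scalar curvature metric on $\overline{\C P^2}\setminus B^4$ and the Weitzenb\"ock vanishing to reduce the moduli space on the blown-up side to a point; you have correctly flagged transversality, exclusion of reducibles via $b_2^+(X')>1$, and orientation/sign bookkeeping as the places where work remains. Since that analysis is exactly what the cited references carry out, your proposal is an accurate sketch of the accepted proof rather than a complete self-contained one, which is an appropriate level of detail for a background theorem of this kind.
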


For Seiberg-Witten behavior under rational blow-downs, we have the following results, \cite{FSrbd}, also see \cite{GS}:

\begin{prop}
Let the sphere configuration $C_n \subset X$, and $X_{(n)} = X^{\circ} \cup B_n$ (where $X^{\circ} = X - C_n$) be the rational blow-down of $X$ along $C_n$. Then for every characteristic element $\overline{K} \in \mathcal{C}_{X_{(n)}}$ there is an element $K \in \mathcal{C}_X$ such that $\overline{K}_{|X^{\circ}} = K_{|X^{\circ}}$ and $K^2 - \overline{K}^2 = -(n-1)$. The class $K$ is called a \textit{lift} of $\overline{K}$.
\end{prop}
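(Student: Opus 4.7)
I would construct $K$ from $\overline K$ by keeping $\overline K|_{X^\circ}$ and replacing the extension of $\overline K$ over $B_n$ with a carefully chosen extension over $C_n$; both $B_n$ and $C_n$ bound $Y = L(n^2,n-1)$, so this is at least plausible at the level of the boundary data.

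\textbf{Producing a characteristic lift.} I would apply Mayer--Vietoris to $X = C_n \cup_Y X^\circ$. Since $C_n$ is simply connected, the long exact sequence of the pair $(C_n, Y)$ (with $H_1(C_n) = 0$) makes $H^2(C_n;\Z) \to H^2(Y;\Z) \cong \Z/n^2\Z$ surjective, hence the restriction $H^2(X;\Z) \to H^2(X^\circ;\Z)$ is also surjective. So some extension $K_0 \in H^2(X;\Z)$ of $\overline K|_{X^\circ}$ exists. Because $w_2(X)|_{X^\circ} = w_2(X^\circ) = w_2(X_{(n)})|_{X^\circ}$, the class $K_0$ automatically agrees with $w_2(X)$ mod~$2$ on $X^\circ$; I would then adjust $K_0$ by a class in the image $H^2(X, X^\circ) \to H^2(X)$ (supported in $C_n$) to enforce $K_0(S_i) \equiv S_i^2 \pmod 2$ for each plumbing sphere.

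\textbf{The square difference.} Because $Y$ is a rational homology sphere, the intersection form on $H_2(\cdot;\Q)$ splits orthogonally across $Y$. Using $H_2(B_n;\Q) = 0$ on one side and $K|_{X^\circ} = \overline K|_{X^\circ}$ on both, the $X^\circ$ contributions cancel and
\begin{equation*}
K^2 - \overline K^2 \;=\; K|_{C_n}^{\,2},
\end{equation*}
the right-hand side computed in the rational form dual to $Q_{C_n}$.

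\textbf{Main obstacle.} The final step is arranging $K|_{C_n}^{\,2} = -(n-1)$. The canonical characteristic element $k_0 \in H^2(C_n;\Z)$ with $k_0(S_1) = -n$ and $k_0(S_i) = 0$ for $i \geq 2$ satisfies $k_0^2 = -(n-1)$: solving $Q_{C_n} v = (-n, 0, \dots, 0)^\top$ recursively from the last row yields $v_i = (n-i)/n$, whence $k_0^{\top} v = -(n-1)$. The main obstacle is a boundary-matching check: $\overline K|_Y$ lies in the image of $H^2(B_n;\Z) \to H^2(Y;\Z)$, which is the order-$n$ subgroup $n\Z/n^2\Z$, and one must show that the characteristic classes on $C_n$ with square $-(n-1)$ restrict to fill exactly this subgroup. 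Granting this, for every $\overline K$ we obtain a characteristic $k \in H^2(C_n;\Z)$ with $k^2 = -(n-1)$ and $k|_Y = \overline K|_Y$; gluing $k$ with $\overline K|_{X^\circ}$ via Mayer--Vietoris produces the desired lift $K$.
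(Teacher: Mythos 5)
The paper does not prove this proposition; it is stated as background and attributed to \cite{FSrbd} (see also \cite{GS}), so there is no in-paper argument to compare against, and I am measuring your proposal against the standard proof in those references. Your skeleton agrees with that proof and the parts you carry out are correct: $H^1(Y;\Z)=0$ and the surjectivity of $H^2(C_n;\Z)\to H^2(Y;\Z)$ do give the Mayer--Vietoris gluing; the intersection form does split rationally across the rational homology sphere $Y$, and $H_2(B_n;\Q)=0$ does reduce everything to $K^2-\overline{K}^2=(K|_{C_n})^2$; and your computation $k_0^2=-(n-1)$ for $k_0=(-n,0,\dots,0)$ is right. But the step you label the ``main obstacle'' is not a peripheral check to be granted --- it is the entire content of the proposition, and you have not proved it. What must be shown is: for every boundary value $\overline{K}|_Y$, which lies in the order-$n$ subgroup $n\Z/n^2\Z \subset H^2(Y;\Z)$ and is the restriction of a characteristic element of $B_n$, there is a characteristic covector $k$ of $Q_{C_n}$ with $k|_Y=\overline{K}|_Y$ and $k^2$ \emph{exactly} $-(n-1)$. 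Your single covector $k_0$ realizes only the boundary values $\pm n\in\Z/n^2\Z$; its multiples $jk_0$ have square $-j^2(n-1)$, so the remaining values need genuinely different covectors, and existence is delicate: already for $n=2$ the characteristic covectors of $Q_{C_2}=(-4)$ are the even integers $2m$ with square $-m^2$, so square $-1$ forces the boundary value $2\in\Z/4\Z$ and is simply unavailable for the value $0$. The proposition is true only because the values that actually occur as restrictions of characteristic elements of $B_n$ are exactly the ones realized on the $C_n$ side, and that matching is the lemma you are missing.

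Two standard ways to close the gap. Either enumerate the characteristic covectors of $Q_{C_n}$ of square $-(n-1)$ and compute their boundary restrictions directly, as in \cite{GS}, Section 8.5; or use the fact from \cite{FSrbd} that $C_n$ embeds in $\#(n-1)\overline{\C P^2}$ with complement $B_n$: the characteristic elements $\sum_{i=1}^{n-1}\pm E_i$ of $\#(n-1)\overline{\C P^2}$ all have square $-(n-1)$, their restrictions to $C_n$ are characteristic covectors of square $-(n-1)$ (the $B_n$ side contributes zero square), and their restrictions to $B_n$ exhaust its characteristic elements, which produces the required family with matching boundary values. A secondary caution about your first step: adjusting $K_0$ by classes supported in $C_n$ changes the vector of values $(K_0(S_i))$ only by elements of $Q_{C_n}\Z^{n-1}$, and since $\det Q_{C_n}=n^2$ the reduction of $Q_{C_n}$ mod $2$ is singular when $n$ is even, so that adjustment need not reach a characteristic class; this is another reason to construct the characteristic covector on $C_n$ with the correct boundary value directly (as in either route above) rather than repairing an arbitrary extension.
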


\begin{thm}
\label{thm:FSrbdsw}
Suppose that $X$ and $X_{(n)}$ (as above) are simply connected $4$-mani\-folds. Choose $\overline{K} \in \mathcal{C}_{X_{(n)}}$, and fix a \text{lift} $K \in \mathcal{C}_X$ for it. If $K^2 \geq 3\sigma(X) + 2\chi(X)$, then $SW_{X_{(n)}}(\overline{K}) = SW_X(K)$. Consequently, the Seiberg-Witten invariants of $X$, $SW_X$, determine the Seiberg-Witten invariants of the rational blow-down of $X$, $SW_{X_{(n)}}$. 
\end{thm}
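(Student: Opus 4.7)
The plan is to compare $SW_X(K)$ and $SW_{X_{(n)}}(\overline{K})$ via a neck-stretching argument along the common boundary $L(n^2,n-1) = \partial C_n = \partial B_n$. Writing $X = C_n \cup X^\circ$ and $X_{(n)} = B_n \cup X^\circ$, I would introduce a family of metrics $g_T$ that insert a cylinder $L(n^2,n-1) \times [-T,T]$ across the separating lens space in both manifolds, and analyze the corresponding Seiberg--Witten moduli spaces as $T \to \infty$.

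Since $L(n^2,n-1)$ is a rational homology sphere, reducible Seiberg--Witten solutions on it form a finite set indexed by $spin^c$ structures, each a nondegenerate critical point of the Chern--Simons--Dirac functional. The first step is to check that the chosen characteristic element $\overline{K}$ on $X_{(n)}$ and its lift $K$ on $X$ both induce a common $spin^c$ structure $\mathfrak{s}_L$ on $L(n^2,n-1)$ to which any solution on $X^\circ$, $C_n$, or $B_n$ with the appropriate boundary behavior must asymptote; this is exactly what the phrase ``$\overline{K}|_{X^\circ} = K|_{X^\circ}$'' encodes. The dimension hypothesis $K^2 \geq 3\sigma(X) + 2\chi(X)$ guarantees that the expected dimension of $\mathcal{M}_X^{\delta,g}(K)$ is non-negative, so the moduli space is generically nonempty and the invariants are actually comparable in the nontrivial range.

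Next, I would invoke a standard Seiberg--Witten gluing theorem over rational homology sphere necks. In the long-neck limit, $SW_X(K)$ decomposes as a signed count of pairs (relative solution on $C_n$, relative solution on $X^\circ$) whose asymptotic limits at $L(n^2,n-1)$ coincide, and $SW_{X_{(n)}}(\overline{K})$ similarly decomposes with $B_n$ in place of $C_n$. Since the $X^\circ$-factor is identical in both gluings, it suffices to show that the relative contribution of $C_n$ (with $spin^c$ structure determined by $K|_{C_n}$) equals the relative contribution of $B_n$ (with $spin^c$ structure determined by $\overline{K}|_{B_n}$). Because $B_n$ is a rational homology ball, its relative moduli space is essentially concentrated at a single reducible solution and contributes $\pm 1$; for $C_n$, an explicit computation using its plumbing description, together with the dimensional shift $K^2 - \overline{K}^2 = -(n-1)$ which exactly compensates for the change in formal dimension, should yield the same contribution.

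The main obstacle is this relative-invariant matching step: one has to track orientations, signs, and the evaluation of the $\mu$-class under the gluing map sufficiently carefully to see that the $C_n$ and $B_n$ contributions actually agree on the nose, rather than merely up to an overall factor. Once this identification is pinned down for the $spin^c$ structure on $L(n^2,n-1)$ arising from $K$, the gluing formula produces $SW_{X_{(n)}}(\overline{K}) = SW_X(K)$, and the second conclusion of the theorem --- that $SW_X$ determines $SW_{X_{(n)}}$ --- follows by letting $\overline{K}$ range over $\mathcal{C}_{X_{(n)}}$ and collecting its compatible lifts in $\mathcal{C}_X$.
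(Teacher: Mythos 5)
This theorem is not proved in the paper at all: it is quoted as a known result of Fintushel and Stern (\cite{FSrbd}, see also \cite{GS}), so there is no in-paper argument to compare against. Your sketch does follow the architecture of the original Fintushel--Stern proof --- stretch a neck along the common lens-space boundary $L(n^2,n-1)$, let the moduli spaces on $X=C_n\cup X^{\circ}$ and $X_{(n)}=B_n\cup X^{\circ}$ degenerate into fiber products of relative moduli spaces over the finite set of reducibles on the lens space, and observe that the $X^{\circ}$ factor is common to both --- so the overall strategy is the right one.

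That said, the sketch omits the ingredients that actually make the argument close, and the step you flag as ``the main obstacle'' is the entire content of the theorem. First, the reason every solution limits onto a reducible on $L(n^2,n-1)$, and the reason no energy is lost down the neck, is that the lens space carries a metric of positive scalar curvature, which forces all critical points of the Chern--Simons--Dirac functional to be reducible and nondegenerate (after perturbation); you assert the conclusion but never invoke the hypothesis that delivers it. Second, the matching of the $C_n$ and $B_n$ contributions is not an ``orientation and $\mu$-class bookkeeping'' issue: it rests on the fact that both pieces are negative definite with $b_2^+=0$, so that for the given boundary spin$^c$ structure each relative moduli space consists of an isolated reducible contributing $\pm 1$, and on the index computation showing that the shift $K^2-\overline{K}^2=-(n-1)$ exactly cancels the change in $3\sigma+2\chi$, so the two closed-manifold moduli spaces have equal formal dimension. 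Without these two inputs the gluing formula gives you nothing. Finally, your parsing of the hypothesis $K^2\geq 3\sigma(X)+2\chi(X)$ is off: non-negative expected dimension does not make a moduli space ``generically nonempty''; the hypothesis is there because a characteristic element $\overline{K}$ has many lifts, the formal dimensions of $\mathcal{M}_X(K)$ and $\mathcal{M}_{X_{(n)}}(\overline{K})$ agree for every lift, and one needs that common dimension to be $\geq 0$ for $SW_X(K)$ to be defined by a count rather than declared zero. As written, the proposal is a correct table of contents for the Fintushel--Stern proof rather than a proof.
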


\begin{rmk}
The theorem above expresses the SW basic classes of $X_{(n)}$ in terms of the SW basic classes of $X$. Consequently, it tells us which SW basic classes $X$ ``pass down" to $X_{(n)}$. It does not, however, provide us a way to reconstruct the SW basic classes of $X$ from those of $X_{(n)}$. In fact, the only basic classes that can ``pass down" from $X$ to $X_{(n)}$, are those which when restricted to $\partial X^{\circ} \cong L(n^2,n-1)$, correspond to an element of order $n$ in $H^2(L(n^2,n-1),\Z) \cong \Z / n^2 \Z$. 
\end{rmk}

For complex surfaces $S$, and a smooth, nonsingular, connected, complex curve $C \subset S$, the standard adjunction formula says that:
\begin{equation*}
2g(C)-2 = [C]^2 - \left\langle c_1(S),C \right\rangle \, , 
\end{equation*}

\noindent where $g(C)$ is the genus of $C$ \cite{GS}. The Seiberg-Witten invariants give us the following adjunction formula result for smooth manifolds $X$: 

\begin{thm}{\textbf{Generalized adjunction formula}}
\label{thm:genadjfor}
\cite{KM,OzSz}, also see \cite{GS}. Assume that $\Sigma \subset X$ is an embedded, oriented, connected surface of genus $g(\Sigma)$ with self-intersection $[\Sigma]^2 \geq 0$ (and $[\Sigma] \neq 0$). Then for every Seiberg-Witten basic class $K \in \mathcal{B}as_X$ we have $2g(\Sigma) - 2 \geq [\Sigma]^2 + |K(\Sigma)|$. If $X$ is of simple type and $g(\Sigma) > 0$, the same inequality holds for $\Sigma \subset X$ with arbitrary square $[\Sigma]^2$.
\end{thm}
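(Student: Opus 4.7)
The plan is to prove the adjunction inequality via Seiberg-Witten theory through a neck-stretching argument, following the approach of Kronheimer-Mrowka. The key observation is that if $K \in \mathcal{B}as_X$, then $SW_X(K) \neq 0$ and, since $b_2^+(X) > 1$, the perturbed moduli space $\mathcal{M}_X^{\delta,g}(K)$ is non-empty for \emph{every} Riemannian metric $g$ and generic small perturbation $\delta$. One then exploits this freedom to choose a metric probing $\Sigma$, and reads off topological constraints from the resulting solutions.

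Concretely, since $[\Sigma]^2 \geq 0$, a tubular neighborhood $N$ of $\Sigma$ carries a compatible almost-K\"ahler structure. I would modify the metric on $X$ to contain a long cylindrical neck $\Sigma \times [-T,T]$ of product type inside $N$, and consider a sequence $T_k \to \infty$ with corresponding SW solutions $(A_k,\Phi_k)$. After passing to a subsequence and translating in the neck direction, these should converge (modulo gauge) to a finite-energy solution of the SW equations on the cylinder $\Sigma \times \R$. Such limit solutions are classified and correspond to pairs of vortices on $\Sigma$ for an associated holomorphic line bundle $\mathcal{L}$ with $\deg \mathcal{L} = \tfrac{1}{2}(K \cdot [\Sigma] + [\Sigma]^2)$. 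Vortex moduli spaces are non-empty only in a specific degree range bounded by $g(\Sigma) - 1$, and unpacking this range yields $2g(\Sigma) - 2 \geq [\Sigma]^2 + K \cdot [\Sigma]$. Applying the same argument to the conjugate spin$^c$ structure (whose first Chern class is $-K$, also a basic class) then produces the absolute value.

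The main obstacle I expect is the compactness analysis along the neck-stretching sequence: one must establish uniform $C^0$ bounds on the spinors $\Phi_k$, rule out bubbling of curvature, and prevent energy from escaping to infinity along the neck. These bounds should follow from the Weitzenb\"ock identity combined with the hypothesis $[\Sigma]^2 \geq 0$, which ensures that no regions of unboundedly negative scalar curvature develop on the growing neck. Additional work is needed to identify the limiting solution precisely and to match it with the vortex moduli space via the $U(1)$-equivariant decomposition of the spin$^c$ structure along $\Sigma$; here the Kodaira-type vanishing for line bundles of too large or too small degree is what feeds the genus bound.

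For the simple-type extension when $g(\Sigma) > 0$ and $[\Sigma]^2$ is arbitrary, a separate tubing argument is required. The plan is to blow up $X$ repeatedly to obtain $X \# r \overline{\C P^2}$, use the blow-up formula to describe its basic classes as $K \pm E_1 \pm \cdots \pm E_r$, and then tube $\Sigma$ with suitable exceptional spheres so as to produce a new embedded surface of the same genus whose self-intersection is non-negative. The already-proved inequality on the blow-up, with a judicious choice of signs maximizing $|(K \pm E_1 \pm \cdots \pm E_r) \cdot [\widetilde{\Sigma}]|$, then returns the desired bound on $X$ after simplification; the simple-type hypothesis is what guarantees that the relevant moduli spaces are zero-dimensional so that the tubing step closes without the appearance of spurious higher-dimensional contributions.
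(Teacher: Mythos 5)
The paper offers no proof of this statement: it is quoted as known background from \cite{KM,OzSz} (see also \cite{GS}), so there is nothing in-paper to compare against, and your sketch has to stand on its own. The first half is a recognizable outline of the Kronheimer--Mrowka neck-stretching strategy, but two corrections are needed. The neck must be $Y \times [-T,T]$ where $Y$ is the unit circle bundle over $\Sigma$ of Euler number $[\Sigma]^2$ --- a $3$-manifold --- not $\Sigma \times [-T,T]$; and the clean identification of translation-invariant limits with vortices on $\Sigma$ is available only when $[\Sigma]^2 = 0$, where $Y = S^1 \times \Sigma$. For $[\Sigma]^2 > 0$ one either runs the scalar-curvature/Weitzenb\"ock estimate directly on the nontrivial circle bundle (Kronheimer--Mrowka's original route), or, more simply, blows up at $[\Sigma]^2$ points of $\Sigma$ so that the proper transform $[\Sigma] - E_1 - \cdots - E_r$ has square zero and the same genus, applies the square-zero case there, and uses the blow-up formula with the signs in $K \pm E_1 \pm \cdots \pm E_r$ chosen to align with $K \cdot [\Sigma]$; this is the argument in \cite{GS}. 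In other words, the blow-up reduction belongs to the \emph{first} part of the theorem, not the second.

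The genuine gap is in your treatment of the simple-type case with arbitrary $[\Sigma]^2$. Tubing with exceptional spheres cannot raise the self-intersection: an exceptional sphere $E$ disjoint from $\Sigma$ gives $([\Sigma]+[E])^2 = [\Sigma]^2 - 1$, and if you blow up at a point of $\Sigma$ and resolve the proper transform with $E$ you merely recover the class $[\Sigma]$ with unchanged square and genus. So there is no choice of tubings that reduces the negative-square case to the non-negative one, and the simple-type hypothesis is not doing the work you assign to it --- zero-dimensionality of moduli spaces plays no role in a tubing construction. The actual mechanism, due to Ozsv\'ath--Szab\'o (and, for immersed spheres, Fintushel--Stern; compare Theorem~\ref{thm:genadjimm} in this paper), is a dimension-shifting relation of the form $SW_X(K) = \pm SW_X(K \pm 2\,PD([\Sigma]))$ extracted from the reducible solutions that appear when one stretches along the negatively-framed circle bundle over $\Sigma$; the simple-type hypothesis then forces the shifted invariant to vanish because its moduli space has the wrong formal dimension, and the adjunction inequality follows by dichotomy. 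Without an argument of this kind your final paragraph does not close.
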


There is also further generalization of this result for immersed spheres. We state here a simplified version, where dim $\mathcal{M}_X^{\delta,g}(K)=0$:

\begin{thm}
\label{thm:genadjimm}
\cite{FS2}. {\textbf{Generalized adjunction formula for }} {\textbf{ immersed spheres.}} Suppose that $X$ is an arbitrary smooth $4$-manifold with $b_2^+(X) > 1$ and that $K \in \mathcal{C}_X$ with $SW_X(K) \neq 0$ and dim $\mathcal{M}_X(K)=0$. If $x \neq 0 \in H_2(X;\Z)$ is represented by an immersed sphere with $p$ positive double points, then either
\begin{equation*}
2p-2 \geq x^2 + |x \cdot L| 
\end{equation*}

\noindent or
\begin{equation*}
 SW_X(K)=\begin{cases}
    SW_X(K+2x), & \text{if} \,\, x\cdot K \geq 0\\
    SW_X(K-2x), & \text{if} \,\, x\cdot K \leq 0.
  \end{cases}     
\end{equation*}
\end{thm}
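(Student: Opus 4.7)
The natural approach is to reduce the immersed case to the embedded case via blow-ups and then invoke the generalized adjunction inequality of Theorem~\ref{thm:genadjfor}. Concretely, I would blow up $X$ at each of the $p$ positive double points of the immersed sphere to obtain $\widetilde{X} = X \# p\,\overline{\C P^2}$ with exceptional classes $E_1,\ldots,E_p$. The proper transform is a smoothly embedded sphere $\widetilde{\Sigma} \subset \widetilde{X}$ representing $\widetilde{x} = x - 2(E_1 + \cdots + E_p)$, with self-intersection $\widetilde{x}^2 = x^2 - 4p$.

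The first alternative of the dichotomy is handled by adjunction on the blow-up. Assuming $\widetilde{x}^2 \geq 0$, the blow-up formula for Seiberg--Witten invariants identifies the basic classes of $\widetilde{X}$ as $\widetilde{K} = K + \sum_{i=1}^{p} \epsilon_i E_i$ with $\epsilon_i \in \{\pm 1\}$ and $K \in \mathcal{B}as_X$. Applying Theorem~\ref{thm:genadjfor} to the embedded genus-zero surface $\widetilde{\Sigma}$ yields $-2 \geq \widetilde{x}^2 + |\widetilde{K}\cdot\widetilde{x}|$. Since $\widetilde{K}\cdot\widetilde{x} = K\cdot x + 2\sum_i \epsilon_i$, choosing the signs to maximize the absolute value gives $|\widetilde{K}\cdot\widetilde{x}| = |K\cdot x| + 2p$, and the inequality collapses to exactly $2p - 2 \geq x^2 + |K\cdot x|$. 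The positivity hypothesis $\widetilde{x}^2 \geq 0$ is used only to legitimize invoking Theorem~\ref{thm:genadjfor}.

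The second, alternative conclusion has to come from a different mechanism, since adjunction is unavailable when $\widetilde{x}^2 < 0$. The natural device is a neck-stretching / wall-crossing argument on the boundary of a tubular neighborhood of $\widetilde{\Sigma}$ in $\widetilde{X}$: reflecting the $spin^c$ structure across $\widetilde{\Sigma}$ shifts its characteristic class by $\pm 2[\widetilde{\Sigma}]$, and a one-parameter family of neck-stretched metrics produces a cobordism between the Seiberg--Witten moduli spaces for $\widetilde{K}$ and for $\widetilde{K} \pm 2\widetilde{x}$. In the zero-dimensional case $\dim\mathcal{M}_X(K)=0$, this cobordism is one-dimensional, and counting its ends with sign gives $SW_{\widetilde{X}}(\widetilde{K}) = SW_{\widetilde{X}}(\widetilde{K} \pm 2\widetilde{x})$; pushing back down via the blow-up formula yields $SW_X(K) = SW_X(K \pm 2x)$, with the sign dictated by $\mathrm{sign}(x\cdot K)$.

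The hard part is the wall-crossing branch. The adjunction half is largely algorithmic once one has Theorem~\ref{thm:genadjfor} and the blow-up formula, but the SW-relation half hinges on carefully controlling the non-compactness of the neck-stretched moduli spaces, understanding the reducible or bubbling ends arising from the reflection across $\widetilde{\Sigma}$, and verifying that the orientations align so that the sign of the shift $K \pm 2x$ is genuinely tied to the sign of $x\cdot K$. It is precisely the combination of $x$ being represented by a \emph{sphere} (so the proper transform is again a sphere, not a higher-genus surface) and the zero-dimensionality of $\mathcal{M}_X(K)$ that makes this wall-crossing collapse into the clean equality of two integer SW-values, rather than a more elaborate formula.
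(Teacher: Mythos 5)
This theorem is quoted background: the paper states it with a citation to \cite{FS2} (Fintushel--Stern, \emph{Immersed spheres in $4$-manifolds and the immersed Thom conjecture}) and gives no proof of its own, so there is no in-paper argument to compare against; I am measuring your proposal against the actual proof in that source. Your opening move is the right one and is indeed how Fintushel and Stern begin: blow up the $p$ positive double points, pass to the embedded proper transform $\widetilde{\Sigma}$ with $[\widetilde{\Sigma}] = x - 2(E_1+\cdots+E_p)$ and square $x^2-4p$, and use the blow-up formula to transport basic classes; your arithmetic showing that $-2 \geq \widetilde{x}^2 + |\widetilde{K}\cdot\widetilde{x}|$ (with the $\epsilon_i$ aligned with the sign of $K\cdot x$) is equivalent to $2p-2 \geq x^2+|K\cdot x|$ is also correct.

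The gap is in how you split the two alternatives. You claim the inequality branch follows from Theorem~\ref{thm:genadjfor} ``assuming $\widetilde{x}^2\geq 0$.'' But for a genus-zero class of non-negative square, Theorem~\ref{thm:genadjfor} gives $-2 \geq \widetilde{x}^2 + |\widetilde{K}\cdot\widetilde{x}| \geq 0$, an outright contradiction; it shows that $\widetilde{X}$ has no basic classes at all, which is incompatible with $SW_X(K)\neq 0$ and the blow-up formula. It does not deliver the first alternative --- indeed the inequality $2p-2\geq x^2+|x\cdot K|$ forces $\widetilde{x}^2\leq -2$, so both horns of the dichotomy live entirely in the regime where $\widetilde{\Sigma}$ has negative square, precisely where Theorem~\ref{thm:genadjfor} for spheres is unavailable. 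Consequently the half you call ``largely algorithmic'' is not: the entire content of the theorem is the Fintushel--Stern dichotomy for an \emph{embedded} sphere of square $-k\leq -2$, namely that for a basic class $\widetilde{K}$ either $|\widetilde{K}\cdot\widetilde{\Sigma}|\leq k-2$ or $SW(\widetilde{K})=SW(\widetilde{K}\pm 2\,PD[\widetilde{\Sigma}])$. Your second paragraph names the right device (stretching the neck along $\partial N(\widetilde{\Sigma})$, a circle bundle over $S^2$ / lens space, which carries positive scalar curvature so that only reducibles survive on the neck, after which the two $spin^c$ structures agreeing on the complement are compared by a dimension count), but as written it is a description of what a proof would have to accomplish rather than an argument: you do not explain where the case division between ``inequality'' and ``SW relation'' actually comes from (it is governed by whether $|\widetilde{K}\cdot\widetilde{\Sigma}|$ exceeds $|\widetilde{\Sigma}^2|-2$, via the index computation for the moduli space on the disk-bundle side), nor how the hypothesis $\dim\mathcal{M}_X(K)=0$ enters. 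So the proposal correctly reproduces the reduction step but leaves the substantive part of the theorem unproved, and misattributes the inequality branch to an adjunction statement that does not apply there. (Minor point: the $|x\cdot L|$ in the statement is a typo for $|x\cdot K|$, which you silently and correctly fix.)
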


The Seiberg-Witten invariants also have interesting behavior if the $4$-manifold $X$ is equipped with a symplectic form $\omega$. For example, if a $4$-manifold has a symplectic structure then it must be of \textit{simple type}. Additionally, we have the following important results of Taubes:

\begin{thm}
\cite{Ta3} If $(X,\omega)$ is a simply connected symplectic mani\-fold with $b_2^+(X) > 1$, then $SW_X(\pm c_1(X,\omega)) = \pm 1$.
\end{thm}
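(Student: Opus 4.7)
The plan is to follow Taubes' strategy of using the perturbed Seiberg--Witten equations on a symplectic manifold and exploiting the presence of the symplectic form to produce an explicit solution for the canonical $spin^c$ structure. Fix an $\omega$-compatible almost-complex structure $J$. This determines a canonical $spin^c$ structure $\mathfrak{s}_{can}$ on $X$ whose positive spinor bundle decomposes as $W^+_{can} = \Lambda^{0,0} \oplus \Lambda^{0,2}$, with $c_1(\det W^+_{can}) = c_1(X,\omega)$. The idea is to show $|SW_X(c_1(X,\omega))| = 1$ by explicitly identifying the SW moduli space of $\mathfrak{s}_{can}$ with a single, transversely cut out point, and then get the other basic class via a symmetry.

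First, I would perturb the Seiberg--Witten equations by $-\tfrac{ir}{4}\omega$ (added to the self-dual curvature equation) for a large real parameter $r$. For the canonical $spin^c$ structure there is a distinguished candidate solution $(A_0,\Phi_0) = (A_{can},(1,0))$, where $A_{can}$ is built from the Chern connection on the anticanonical bundle and $\Phi_0$ is the constant section $1$ of $\Lambda^{0,0}$. A direct computation using the K\"ahler identities and the fact that $\omega$ is self-dual shows that this pair satisfies the perturbed Dirac equation and the perturbed curvature equation; here I would use that the $\Lambda^{0,2}$ component vanishes identically and that $F_{A_{can}}^+$ is proportional to $\omega$.

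Next comes the main analytic input, which is the hard step: showing that for $r$ gg 0, every solution of the perturbed SW equations on $\mathfrak{s}_{can}$ is gauge-equivalent to $(A_0,\Phi_0)$ and the linearization is surjective. The idea is a Weitzenb\"ock argument: if one writes $\Phi = (\alpha,\beta)$, the equations force $|\alpha|^2 \to 1$ and $|\beta|^2 \to 0$ pointwise as $r \to \infty$, and then integration by parts rules out any nontrivial zero locus of $\alpha$ in the class Poincar\'e dual to $0 \in H^2(X;\Z)$. This is precisely Taubes' concentration principle from \cite{Ta3}, and reproducing it in detail is far beyond a sketch; the core obstacle is obtaining the $C^0$ estimates uniformly in $r$ and handling transversality. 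Since $b_2^+(X)>1$, the invariant is independent of the perturbation, so this unique solution gives $SW_X(c_1(X,\omega)) = \pm 1$ with a sign determined by orientation conventions.

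Finally, for the value on $-c_1(X,\omega)$, I would invoke the charge-conjugation symmetry of the Seiberg--Witten equations: complex conjugation of spinors interchanges the $spin^c$ structures $\mathfrak{s}$ and $\bar{\mathfrak{s}}$ (which have opposite $c_1$), and compares the moduli spaces with a sign $(-1)^{(\chi(X)+\sigma(X))/4}$. In particular $|SW_X(-c_1(X,\omega))| = |SW_X(c_1(X,\omega))| = 1$. Combining the two computations yields $SW_X(\pm c_1(X,\omega)) = \pm 1$ as stated. I would cite \cite{Ta3} for the full analytic details, since the result is Taubes' theorem and the substance of the proof lies entirely in the concentration/uniqueness step sketched above.
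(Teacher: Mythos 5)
The paper does not prove this statement at all: it is quoted as background and attributed directly to Taubes \cite{Ta3}, with no argument given. Your sketch is a faithful outline of Taubes' original proof (the large $-\tfrac{ir}{4}\omega$ perturbation, the explicit solution $(A_{can},(1,0))$ on the canonical $spin^c$ structure, uniqueness and transversality for $r \gg 0$ via the concentration estimates, and charge conjugation to handle $-c_1(X,\omega)$), and since you defer the analytic core to the same reference the paper cites, your treatment is consistent with, and somewhat more informative than, the paper's.
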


\begin{thm}
\label{thm:Tmain}
\cite{Ta1,Ta2}, also see \cite{Ko1} (and \cite{GS}, chapter 10, for this simpler statement). Suppose that $(X,\omega)$ is a symplectic $4$-manifold with $b_2^+(X) > 1$ and $SW_X(K) \neq 0$ for a given $K \in \mathcal{C}_X$. Assume furthermore that the class $c = \frac{1}{2}(K - c_1(X,\omega))$ is nonzero in $H^2(X;\Z)$. Then for a generic compatible almost-complex structure $J$ on $X$, the class $PD(c) \in H_2(X;\Z)$ can be represented by a pseudo-holomorphic submanifold (not necessarily connected).
\end{thm}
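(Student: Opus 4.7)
The plan is to follow Taubes' strategy, which exploits the symplectic structure to reformulate the Seiberg--Witten equations in terms that force solutions to concentrate along a pseudo-holomorphic locus. Fix an $\omega$-compatible almost-complex structure $J$; this determines a canonical $spin^c$ structure $\mathfrak{s}_{\mathrm{can}}$ with $c_1(\mathfrak{s}_{\mathrm{can}}) = c_1(X,\omega)$, and every other $spin^c$ structure has the form $\mathfrak{s}_E = \mathfrak{s}_{\mathrm{can}} \otimes E$ for a complex line bundle $E$. Writing the hypothesis class as $K = c_1(\mathfrak{s}_E)$, one computes $c_1(E) = \frac{1}{2}(K - c_1(X,\omega)) = c$, so the goal is to represent $PD(c_1(E))$ by a $J$-holomorphic submanifold. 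Relative to the decomposition $W^+_E = E \oplus (K_X^{-1}\otimes E)$, a positive spinor splits as $(\alpha,\beta)$ with $\alpha \in \Gamma(E)$.

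The key device is to perturb the curvature equation by $-\tfrac{i r}{4}\omega$ for a large parameter $r > 0$, producing the family of equations schematically
\begin{equation*}
D_A(\alpha,\beta) = 0, \qquad F_A^+ \;=\; \tfrac{r}{4}\bigl(|\alpha|^2 - |\beta|^2 - 1\bigr)\omega + \text{quadratic in }(\alpha,\beta).
\end{equation*}
Since $b_2^+(X) > 1$ and $SW_X(K) \neq 0$, a solution $(A_r,\alpha_r,\beta_r)$ exists for every $r$. Applying the Weitzenb\"ock formula to $|\alpha|^2$ and a maximum-principle argument yields the pointwise bounds $|\alpha_r| \le 1 + O(r^{-1})$ and $|\beta_r|^2 = O(r^{-1})$, with $|\beta_r|$ decaying exponentially away from $\alpha_r^{-1}(0)$. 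Consequently, the curvature $\tfrac{i}{2\pi}F_{A_r}$, which is Poincar\'e dual to $c_1(E)$ for every $r$, is small away from $\alpha_r^{-1}(0)$ and concentrates along it.

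Next, interpret the normalized currents $\tfrac{i}{2\pi}F_{A_r}$ as a sequence of closed $2$-currents representing $PD(c_1(E))$; by the curvature bounds and standard compactness results for positive currents they converge, along a subsequence $r_k \to \infty$, to a closed positive $(1,1)$-current $T$ supported on a compact set $Z$. The hypothesis $c \neq 0$ rules out the model solution $\alpha \equiv 1$, $\beta \equiv 0$ (which would have trivial zero locus and cohomology class), so $T$ is non-trivial. A local blow-up analysis near points of $Z$ identifies the rescaled limit of $\alpha_{r_k}^{-1}(0)$ with a solution of the Cauchy-Riemann equation on $E|_Z$; combined with a monotonicity inequality and removable singularities for $J$-holomorphic curves, this upgrades $Z$ to (the support of) a $J$-holomorphic subvariety representing $PD(c_1(E))$, which is the desired pseudo-holomorphic submanifold (possibly with multiplicities on components).

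The main obstacle is the last step: producing an honest pseudo-holomorphic curve from the concentration set $Z$. One must control how $\alpha_{r_k}^{-1}(0)$ converges to $Z$ in Hausdorff distance, rule out boundary or dimension-zero strata in the limit, and compare the multiplicities of $T$ with the geometric multiplicities of the curve. The needed ingredients are the Weitzenb\"ock identity in a form isolating the $(0,2)$-obstruction to holomorphicity of $\alpha$, exponential decay estimates for $\beta$ on the complement of $\alpha^{-1}(0)$, and monotonicity for $J$-holomorphic curves. The role of the genericity of $J$ in the statement is only to ensure that the resulting subvariety is generically an embedded submanifold rather than a more singular current.
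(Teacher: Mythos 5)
The paper does not prove this statement: it is quoted directly from Taubes \cite{Ta1,Ta2} (with \cite{Ko1} and \cite{GS} as expositions), so there is no in-paper argument to compare yours against. Judged on its own terms, your outline is a faithful summary of Taubes' actual strategy: the identification of $spin^c$ structures with twists $\mathfrak{s}_{\mathrm{can}} \otimes E$ of the canonical one, the computation $c_1(E) = \tfrac{1}{2}(K - c_1(X,\omega)) = c$, the large-$r$ perturbation of the curvature equation by a multiple of $\omega$, the a priori bounds $|\alpha_r| \le 1 + O(r^{-1})$ and $|\beta_r|^2 = O(r^{-1})$ from the Weitzenb\"ock formula and the maximum principle, and the concentration of $\tfrac{i}{2\pi}F_{A_r}$ along $\alpha_r^{-1}(0)$ followed by a current-compactness and regularity argument. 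The use of $c \neq 0$ to exclude the reducible-to-trivial limit is also placed correctly.

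That said, what you have written is a proof \emph{plan}, not a proof: you yourself flag that the entire analytic content lies in the final step (Hausdorff convergence of $\alpha_{r_k}^{-1}(0)$, monotonicity, removable singularities, matching of current multiplicities with geometric multiplicities), and none of that is carried out. This is the part of Taubes' work that occupies several long papers, and no short argument is known to substitute for it. One further point deserves care: the conclusion of the theorem as stated is an embedded pseudo-holomorphic \emph{submanifold} for generic compatible $J$, whereas the limit construction a priori yields a $J$-holomorphic subvariety with multiplicities; passing from the latter to the former requires the genericity of $J$ together with adjunction-type and automatic-transversality arguments (this is where the ``not necessarily connected'' caveat and the possible multiply-covered components are dealt with), and your closing sentence compresses this into one clause. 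For the purposes of this paper the correct move is simply to cite Taubes, as the author does; if you intend your sketch as an actual proof, each of the deferred steps must be supplied.
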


From the above result, one can also conclude the following:

\begin{thm}
\label{thm:Tmin}
\cite{Ta2,Ta4,Ko1}, also see \cite{GS}. If $X$ is a minimal symplectic $4$-manifold (i.e. does not contain symplectic spheres with self-intersection $(-1)$) with $b_2^+(X) > 1$, then $c_1^2(X,\omega) \geq 0$.
\end{thm}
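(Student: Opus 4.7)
The plan is to apply Theorem~\ref{thm:Tmain} to the characteristic class $K := -c_1(X,\omega)$ and then exploit the adjunction formula for the resulting $J$-holomorphic representative. By the Taubes theorem stated just before Theorem~\ref{thm:Tmain}, $SW_X(-c_1(X,\omega)) = -1 \neq 0$. If $c_1(X,\omega) = 0 \in H^2(X;\Z)$, then $c_1^2(X,\omega) = 0$ and there is nothing to prove, so assume $c_1(X,\omega) \neq 0$. The class $c := \tfrac{1}{2}(K - c_1(X,\omega)) = -c_1(X,\omega)$ is then nonzero, and Theorem~\ref{thm:Tmain} produces, for a generic $\omega$-compatible almost-complex structure $J$, an embedded $J$-holomorphic submanifold $\Sigma \subset X$ with $[\Sigma] = PD(-c_1(X,\omega))$.

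Next, I would decompose $\Sigma = \bigsqcup_{i=1}^{r} \Sigma_i$ into its connected $J$-holomorphic components of genera $g_i$. Since the components are geometrically disjoint, $\sum_i [\Sigma_i]^2 = [\Sigma]^2 = c_1^2(X,\omega)$, and from $[\Sigma] = PD(-c_1(X,\omega))$ also $\sum_i c_1(X,\omega) \cdot [\Sigma_i] = c_1(X,\omega) \cdot [\Sigma] = -c_1^2(X,\omega)$. Applying the adjunction formula $2g_i - 2 = [\Sigma_i]^2 - c_1(X,\omega) \cdot [\Sigma_i]$ to each component and summing yields
\begin{equation*}
2\,c_1^2(X,\omega) \;=\; \sum_{i=1}^{r}(2g_i - 2), \qquad \text{i.e.,} \qquad c_1^2(X,\omega) \;=\; \sum_{i=1}^{r}(g_i - 1).
\end{equation*}
Thus the theorem reduces to showing that no component $\Sigma_i$ can be a sphere.

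Suppose for contradiction that some $\Sigma_i$ has $g_i = 0$. If $[\Sigma_i]^2 \geq 0$, the generalized adjunction inequality (Theorem~\ref{thm:genadjfor}), applied to the basic class $c_1(X,\omega)$, would force $-2 \geq [\Sigma_i]^2 + |c_1(X,\omega) \cdot [\Sigma_i]| \geq 0$, a contradiction. If $[\Sigma_i]^2 = -1$, then $\Sigma_i$ is an embedded $J$-holomorphic, hence symplectic, sphere of self-intersection $(-1)$, directly contradicting the minimality hypothesis. The case I expect to be the main obstacle is a sphere component with $[\Sigma_i]^2 \leq -2$: such a curve is not excluded by naive adjunction (one only gets $c_1(X,\omega) \cdot [\Sigma_i] \leq 0$) nor by minimality, and ruling it out requires a finer input from Taubes' $SW = Gr$ correspondence, namely that the embedded representative of $PD(-c_1(X,\omega))$ produced by Taubes on a minimal $(X,\omega)$ with $b_2^+ > 1$ has all sphere components of square $\geq -1$. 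Granting this, every $g_i \geq 1$ and the formula above gives $c_1^2(X,\omega) \geq 0$.
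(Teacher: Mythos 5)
The paper does not prove this statement at all --- it is quoted as a black box from Taubes, Kotschick and Gompf--Stipsicz --- so there is no internal proof to compare against; your proposal has to stand on its own. The formal skeleton you set up is correct and is indeed the standard route: taking $K=-c_1(X,\omega)$ in Theorem~\ref{thm:Tmain} gives $c=-c_1(X,\omega)$, so the canonical class is represented by an embedded pseudo-holomorphic submanifold $\Sigma=\bigsqcup\Sigma_i$; disjointness kills the cross terms, the embedded adjunction formula (Corollary~\ref{cor:adjformula} with equality) gives $2g_i-2=[\Sigma_i]^2-c_1\cdot[\Sigma_i]$, and summing yields $c_1^2(X,\omega)=\sum_i(g_i-1)$. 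Your disposal of sphere components with $[\Sigma_i]^2\geq 0$ (via Theorem~\ref{thm:genadjfor}) and with $[\Sigma_i]^2=-1$ (via minimality) is also fine.

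The genuine gap is exactly where you flag it: sphere components with $[\Sigma_i]^2\leq -2$. You ``grant'' the needed fact rather than proving it, and it cannot be extracted from Theorem~\ref{thm:Tmain} as stated, which only asserts existence of \emph{some} embedded representative for generic $J$ and says nothing about the squares of its components. The missing ingredient is the genericity/index bound from Taubes' structure theory: for a generic $\omega$-compatible $J$, every somewhere-injective (in particular every embedded) $J$-holomorphic curve in a class $A$ of genus $g$ has non-negative Fredholm index, $A^2 - K\cdot A \geq 0$ where $K=-c_1(X,\omega)$; combined with adjunction $2g-2=A^2+K\cdot A$ this gives $[\Sigma_i]^2\geq g_i-1$, hence $\geq -1$ for spheres, which is precisely the statement you assume. (One also has to be slightly careful that the Taubes representative can carry multiply covered square-zero tori, which escape this index count but are harmless here since they are not spheres.) Without supplying this input --- which is the actual mathematical content of the theorem, everything else being formal bookkeeping with adjunction --- the argument is an accurate reduction of the theorem to its hard step rather than a proof of it.
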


From the above two results and the generalized adjunction formula, we can further conclude the following:

\begin{cor}
\label{cor:taubes}
\cite{Ta4}, also see \cite{GS}. If $(X,\omega)$ is a symplectic $4$-manifold with $c_1^2(X,\omega) \leq -1$, then for a generic compatible almost-complex structure $J$ on $X$, there exists a $J$-holomorphic sphere of self-intersection $(-1)$.
\end{cor}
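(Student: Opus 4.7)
The plan is to combine Theorems~\ref{thm:Tmin} and~\ref{thm:Tmain} with the blow-up formula, assuming $b_2^+(X) > 1$ throughout (the setting in which the cited theorems apply). Since $c_1^2(X,\omega) \leq -1 < 0$, the contrapositive of Theorem~\ref{thm:Tmin} tells me that $(X,\omega)$ is not minimal, so there is a symplectically embedded sphere $\Sigma_0 \subset X$ with self-intersection $-1$. Setting $E = [\Sigma_0] \in H_2(X;\Z)$, we have $E^2 = -1$, and the ordinary adjunction formula gives $c_1(X,\omega) \cdot E = 1$.

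I would then upgrade $\Sigma_0$ to a $J$-holomorphic sphere for a generic $\omega$-compatible $J$. Since $\Sigma_0$ is an exceptional symplectic sphere, $(X,\omega)$ can be symplectically blown down to some closed symplectic manifold $(X_0,\omega_0)$, with $c_1(X_0,\omega_0) = c_1(X,\omega) + E$ under the natural identification of $H^2$'s. Taubes' theorem gives $\pm c_1(X_0,\omega_0)$ as basic classes of $X_0$, and the blow-up formula promotes $K := c_1(X,\omega) + 2E = c_1(X_0,\omega_0) + E$ to a basic class of $X$. Applying Theorem~\ref{thm:Tmain} with this $K$, the class $c = \tfrac{1}{2}(K - c_1(X,\omega)) = E$ is nonzero (since $E^2 = -1$), so for generic $J$ the class $PD(E) \in H_2(X;\Z)$ is represented by a nonempty $J$-holomorphic submanifold $\Sigma$.

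The last step is to identify $\Sigma$ as a single embedded $J$-holomorphic $(-1)$-sphere. Decomposing $\Sigma = \bigcup_i n_i \Sigma_i$ into irreducible $J$-holomorphic components with multiplicities $n_i \geq 1$, I would use $\omega$-positivity ($\omega \cdot [\Sigma_i] > 0$), positivity of intersections for distinct $J$-holomorphic curves, and the adjunction inequality $c_1(X,\omega) \cdot [\Sigma_i] \geq [\Sigma_i]^2 + 2 - 2g(\Sigma_i)$ (with equality for embedded curves), combined with the numerical constraints $\sum n_i [\Sigma_i] = E$, $E^2 = -1$, and $c_1(X,\omega) \cdot E = 1$, to rule out all reducible and multiply-covered possibilities and force $\Sigma$ to be a single embedded sphere representing $E$.

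The main obstacle will be this final identification: ensuring that the pseudo-holomorphic representative of $E$ produced by Theorem~\ref{thm:Tmain} is a genuine embedded $(-1)$-sphere rather than some more complicated cycle in the class $E$. The primitivity of $E$ (forced by $c_1(X,\omega) \cdot E = 1$) together with positivity of intersections is where the essential work lies; alternatively, one could bypass the combinatorial bookkeeping by appealing directly to Taubes' $\mathrm{SW} = \mathrm{Gr}$ correspondence, which computes the Gromov invariant of $E$ as $\pm 1$, counted by exactly one embedded $J$-holomorphic $(-1)$-sphere for generic $J$.
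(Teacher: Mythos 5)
Your proposal follows essentially the same route as the paper's proof: use Theorem~\ref{thm:Tmin} to produce a symplectic $(-1)$-sphere, observe that $c_1(X,\omega)+2PD([\Sigma])$ is a Seiberg--Witten basic class, apply Theorem~\ref{thm:Tmain} with this class to represent $PD([\Sigma])$ pseudo-holomorphically for generic $J$, and then invoke the adjunction formula to force the representative to be an embedded sphere. Your elaborations (justifying the basic class via blow-down plus the blow-up formula, and spelling out the component-by-component adjunction argument in the last step) simply fill in details the paper leaves terse, so this is a correct proof along the paper's own lines.
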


\begin{proof}
From Theorem~\ref{thm:Tmin} it follows that if $c_1^2(X,\omega) \leq -1$, then there exists a symplectic sphere, $\Sigma \in X$, with $[\Sigma]^2 = -1$. However, since for the homology class $\pm [\Sigma]$ we have $c_1(X,\omega) \cdot PD([\Sigma]) = 1$ and $SW_X(c_1(X,\omega) + 2PD([\Sigma])) \neq 0$, meaning that $c_1(X,\omega) + 2PD([\Sigma]) \in \mathcal{B}as_X$, then from Theorem~\ref{thm:Tmain} we have that the homology class $[\Sigma]$ can be represented by a pseudo-holomorphic submanifold. Finally, the generalized adjunction formula forces the pseudo-holomorphic submanifold to be a sphere. \end{proof}

\subsection{Toric and almost-toric fibrations of symplectic 4-manifolds}
\label{sec:toric}

In this section we introduce toric and almost-toric models of symplectic 4-manifolds, which will be used in \textit{Step 3} (section~\ref{sec:step3}) of the proof of the main theorem. The goal is to introduce enough terminology, so that we can present the almost-toric models of manifolds $C_n$ and $B_n$, as well as illustrate how to see the ``core" $\L_n$ of $B_n$ (see section~\ref{sec:bn}) in these models.

In \cite{Sym1}, Symington showed that the rational blow-down construction can be performed in the symplectic category. She did this by describing the symplectic structure of $C_n$ and a collar neighborhood of $\partial B_n$ with the help of toric fibrations. In \cite{Sym2}, she generalized this construction to show that the generalized rational blow-down can also be performed in the symplectic category. In \cite{Sym3}, she presented a way of describing symplectic $4$-manifolds through almost-toric fibrations and used this to prove the existence of the  symplectic rational blow-down in a less cumbersome manner than using just toric fibrations. 

The goal of toric and almost-toric fibrations of symplectic $4$-manifolds is to be able to depict various topological and symplectic properties of these manifolds with polytopes and curves in plane. The basis for doing this comes from a theorem of Delzant:

\begin{thm}
\label{thm:delzant}
\cite{De} If a closed symplectic manifold $(M^{2n},\omega)$ is equipped with an effective Hamiltonian $n$-torus action, then the image of the moment map $\Delta$ determines the manifold $M$, its symplectic structure $\omega$ and the torus action.
\end{thm}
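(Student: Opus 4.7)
The plan is to prove Delzant's theorem in two main parts, existence and uniqueness, after first extracting the combinatorial structure of the moment image $\Delta = \mu(M) \subset \R^n$. First I would appeal to the Atiyah--Guillemin--Sternberg convexity theorem to conclude that $\Delta$ is a convex polytope, equal to the convex hull of the images of the $T^n$-fixed points. Applying the equivariant Darboux theorem at a fixed point $p$, I would show that in a neighborhood of $p$ the action is equivariantly symplectomorphic to a standard linear $T^n$-action on $\C^n$, and that the isotropy weights at $p$ are precisely the primitive integer vectors along the edges of $\Delta$ emanating from $\mu(p)$. This shows $\Delta$ is \emph{Delzant}: at every vertex exactly $n$ edges meet, and their primitive integer directions form a $\Z$-basis of the integer lattice in $\R^n$. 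This Delzant condition is the combinatorial datum that drives the rest of the argument.

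For existence, given a Delzant polytope $\Delta = \bigcap_{i=1}^d \{x \in \R^n : \langle x, v_i\rangle \geq \lambda_i\}$ with primitive inward normals $v_i \in \Z^n$, I would define the surjective linear map $\pi : \R^d \to \R^n$ sending the standard basis vectors to $v_1, \ldots, v_d$. The Delzant condition guarantees that the subtorus $N \subset T^d$ whose Lie algebra is $\ker \pi$ is closed. Performing symplectic reduction of $(\C^d, \omega_{\mathrm{std}})$ by the standard Hamiltonian $N$-action at level $-\lambda = (\lambda_1, \ldots, \lambda_d)$ then produces a smooth closed symplectic manifold $M_\Delta$ equipped with a residual Hamiltonian $T^d/N \cong T^n$-action, and a direct computation shows its moment polytope is exactly $\Delta$.

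For uniqueness, suppose $(M,\omega)$ is any closed symplectic toric manifold with moment polytope $\Delta$. Using the local equivariant Darboux models at each vertex, I would construct equivariant symplectomorphisms between neighborhoods of the $T^n$-fixed points in $M$ and in $M_\Delta$, then extend outward across the edges and faces of $\Delta$, matching on overlaps using the freeness of the $T^n$-action over the interior of $\Delta$, to patch these into a global equivariant diffeomorphism $\varphi : M \to M_\Delta$ satisfying $\mu_{M_\Delta} \circ \varphi = \mu_M$. The two symplectic forms $\varphi^* \omega_{M_\Delta}$ and $\omega$ on $M$ then share the same moment map, and an equivariant Moser argument promotes $\varphi$ to a genuine equivariant symplectomorphism. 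The main obstacle is precisely this patching step: controlling how the equivariant local symplectic models glue along the edges and faces of the orbit-type stratification, and arranging the subsequent Moser isotopy to remain equivariant throughout.
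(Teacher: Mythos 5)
The paper does not prove this statement: it is Delzant's classification theorem, quoted from \cite{De} as background for Symington's (almost-)toric machinery, so there is no in-paper argument to compare against. Judged against the known proof, your outline is essentially Delzant's original strategy and is correct in structure: convexity via Atiyah--Guillemin--Sternberg, the equivariant Darboux/local normal form analysis at the fixed points yielding the Delzant (smoothness) condition on $\Delta$, existence by symplectic reduction of $\C^d$ by the subtorus $N$ with Lie algebra $\ker\pi$, and uniqueness by globalizing the local models. Two points deserve more care. In the existence step, what the Delzant condition actually buys is that $N$ acts \emph{freely} on the level set $\mu_N^{-1}(-\lambda)$, so that the reduced space is a smooth manifold; the closedness of $N$ only requires $\pi(\Z^d)=\Z^n$. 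More substantially, the uniqueness ``patching'' step you flag is exactly where the real content lies: the local equivariant identifications over the open stars of the faces of $\Delta$ differ on overlaps by automorphisms generated by functions pulled back from the base, and the obstruction to choosing them coherently is a degree-one \v{C}ech class over $\Delta$, which vanishes precisely because $\Delta$ is convex, hence contractible. Without invoking this (or Delzant's equivalent sheaf-theoretic argument), ``extend outward across the edges and faces'' is an assertion rather than a proof; similarly, the concluding equivariant Moser isotopy needs the two forms to be cohomologous, which you should extract from the equality of the moment images (the symplectic areas of the invariant spheres lying over the edges are determined by the edge lengths). With those points filled in, your proposal is a faithful reconstruction of the standard proof.
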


\noindent Additionally, we have the following key result on Hamiltonian torus actions:

\begin{thm}
\label{thm:polytope}
\cite{At,GuSt} The moment map image $\Delta$ for a Hamiltonian $k$-torus action on a closed symplectic manifold $(M,\omega)$ is a convex polytope.
\end{thm}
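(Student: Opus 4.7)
The plan is to mimic the classical Atiyah--Guillemin--Sternberg argument, proceeding by induction on $k = \dim T$ and proving two statements in tandem: (i) every fiber $\mu^{-1}(c)$ of the moment map is connected, and (ii) the image $\Delta := \mu(M) \subset \mathfrak{t}^\ast$ is convex. Once convexity is in hand, I will upgrade to polytope-ness by identifying $\Delta$ with the convex hull of the finitely many images of the connected components of the fixed point set $M^T$.

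For the base case $k=1$, the moment map $\mu : M \to \R$ of a Hamiltonian circle action is a Morse--Bott function whose critical set coincides with the fixed set $M^{S^1}$. Linearizing the action at a fixed point, a neighborhood decomposes symplectically into a sum of two-dimensional weight spaces, and in these coordinates the Hessian of $\mu$ is a sum of $2 \times 2$ blocks whose two eigenvalues have equal sign. Hence all Morse--Bott indices and coindices are even, so standard Morse theory yields that every sublevel set and every level set of $\mu$ is connected; in particular $\Delta = [\min \mu, \max \mu]$ is an interval.

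For the inductive step, I first verify that for every $v \in \mathfrak{t}$ the function $\mu_v := \langle \mu, v \rangle$ has connected level sets. If the one-parameter subgroup generated by $v$ closes up to a subcircle, this follows from the $k=1$ case applied to that subcircle; for irrational $v$, I approximate by rational $v_n \to v$ and pass to the limit using compactness of $M$ together with the fact that a Hausdorff limit of connected compact sets is connected. To obtain convexity of $\Delta$ and connectedness of fibers of the full $\mu$, I apply the inductive hypothesis to a codimension-one subtorus $T' \subset T$: the $T'$-moment map is a linear projection of $\mu$ with connected fibers by induction, and on each such fiber the residual $S^1 = T/T'$ action has connected-fiber moment map by the base case. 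Slicing $\mu$ through these projections and chaining the two connectedness statements assembles into simultaneous connectedness of $\mu^{-1}(c)$ and convexity of $\Delta$ (if $p, q \in \Delta$, the preimage $\mu_v^{-1}([\langle p, v \rangle, \langle q, v \rangle])$ is connected for every $v$, forcing the segment $[p,q]$ into $\Delta$).

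The final step promotes convexity to polytope-ness. Compactness of $M$ forces $M^T$ to have only finitely many connected components $F_1, \ldots, F_N$, and $T$-invariance of $\mu$ together with local freeness away from $M^T$ forces $\mu$ to be constant on each $F_i$, with value $p_i \in \mathfrak{t}^\ast$. The equivariant local normal form (Marle--Guillemin--Sternberg) at a fixed point shows that near $p_i$ the image $\Delta$ locally coincides with a rational polyhedral cone with apex $p_i$. Combined with global convexity, the extreme points of $\Delta$ must lie in $\{p_1, \ldots, p_N\}$, so $\Delta = \mathrm{Conv}\{p_1, \ldots, p_N\}$ is a convex polytope. I expect the main obstacle to be the bookkeeping of the joint induction, particularly the connectivity of $\mu_v^{-1}(c)$ for irrational $v$ and the way it must be packaged to deduce convexity of $\Delta$; the local-model step for polytope-ness is mostly mechanical once the equivariant Darboux normal form is available.
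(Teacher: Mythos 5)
The paper does not prove this statement at all: it is quoted as background (Theorem~\ref{thm:polytope}) with a citation to Atiyah and to Guillemin--Sternberg, so there is no in-paper argument to compare against. Your sketch is essentially the classical Atiyah proof from the cited sources --- base case via the even-index Morse--Bott connectedness lemma, joint induction on fiber connectedness and convexity through codimension-one rational subtori, and the upgrade to a polytope via the images of the fixed-point components and the equivariant local normal form --- and as a proof outline it is sound.

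Two soft spots are worth tightening. First, the irrational-$v$ step: a Hausdorff limit of compact connected sets is indeed connected, but you have not argued that $\mu_{v_n}^{-1}(c_n)$ converges in Hausdorff distance to $\mu_v^{-1}(c)$ (level sets can jump under perturbation of $v$), so that step is not justified as written; fortunately it is also unnecessary, since convexity only requires rational directions together with a limiting argument on the segments themselves. Second, your final parenthetical is not the right mechanism: connectedness of $\mu_v^{-1}\bigl(\left[\langle p,v\rangle,\langle q,v\rangle\right]\bigr)$ does not by itself force $[p,q]\subset\Delta$. The correct deduction is that for a rational projection $\pi\colon\mathfrak{t}^\ast\to\mathfrak{t}'^\ast$ along a line direction, the fiber $(\pi\circ\mu)^{-1}(x)$ is connected by the inductive hypothesis, hence its $\mu$-image $\Delta\cap\pi^{-1}(x)$ is a connected subset of a line, i.e.\ a segment; one then approximates the direction of $q-p$ by rational directions and uses compactness of $\Delta$ to conclude $[p,q]\subset\Delta$. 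You gesture at this in the preceding sentence, but the stated justification should be replaced by this one.
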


\noindent When $k=n$, the manifold $(M^{2n},\omega)$ is called toric. For our purposes we will only be dealing with the case $n=2$, and while several of the following results hold in any even dimension, we will only state them for $n=2$. The main goal of \cite{Sym3}, with the almost-toric fibrations is to extend the above two theorems to work for a larger class of symplectic $4$-manifolds, and generalize the class of moment-map images. 

Since the symplectic form vanishes on the fibers of a moment map, implying that the regular fibers are Lagrangian submanifolds, the moment map actually provides us with a Lagrangian fibration:

\begin{defn}
\cite{Sym3} A projection $\pi : (M^4,\omega) \rightarrow B^2$ is a \textit{Lagrangian fibration} if it restricts to a regular Lagrangian fibration (locally trivial fibration where the fibers are Lagrangian) over an open dense set $B_0 \subset B$.
\end{defn}

\noindent The most basic example is $\pi:(\R^2 \times T^2,\omega_0) \rightarrow \R^2$, with $\omega_0$ the standard symplectic structure, which serves as a model for all other examples. The goal is to make use of the standard lattice $\Lambda_0$ on the tangent bundle $T\R^2$, spanned by $\left\{\frac{\partial}{\partial p_i}\right\}$ and $\left\{\frac{\partial}{\partial q_i}\right\}$, where $(p,q)$ are the standard coordinates on $\R^2 \times T^2$. In relation to this, Symington shows the following:

\begin{thm}
\cite{Sym3} If $\pi: (M,\omega) \rightarrow B$ is a regular Lagrangian fibration then there are lattices $\Lambda \subset TB$, $\Lambda^{\ast} \subset T^{\ast}B$ and $\Lambda^{vert}$ in the vertical bundle of $TM$ (induced by $\pi$) that, with respect to standard local coordinates, are the standard lattice, its dual, and the standard vertical lattice.
\end{thm}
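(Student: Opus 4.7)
The plan is to construct the lattice $\Lambda^{vert}$ first, then transport it to $T^*B$ via the symplectic form to obtain $\Lambda^*$, and finally define $\Lambda \subset TB$ as the dual of $\Lambda^*$. The guiding principle is the Arnold--Liouville action--angle construction: on a regular Lagrangian fibration the natural integer structure comes from the periods of Hamiltonian flows generated by functions pulled back from the base.

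First, I would work locally over a small contractible open set $U \subset B_0$ and observe the following key fact. For any smooth function $f : U \to \R$, the Hamiltonian vector field $X_{f \circ \pi}$ is vertical: if $g$ is any other such function, $\omega(X_{f\circ\pi}, X_{g\circ\pi}) = \{f\circ\pi, g\circ\pi\} = 0$ because the fibers are Lagrangian, so $X_{f\circ\pi}$ annihilates $\pi^*T^*B$ and is therefore tangent to the fibers. Moreover, contraction with $\omega$ gives an isomorphism $\iota : T^*_bB \to (\ker d\pi)_x$ for any $x \in \pi^{-1}(b)$, sending $df(b)$ to $X_{f\circ\pi}(x)$; this map depends only on $df(b)$, not on $f$. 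Now, for each $b \in U$, the vector fields $\{X_{f\circ\pi}\}_{df(b) = \xi}$ as $\xi$ ranges over a basis of $T^*_bB$ commute and span the vertical tangent space, so their joint flow defines a transitive local $\R^2$-action on $\pi^{-1}(b)$. Since the fiber is compact (a regular fibration with Lagrangian fibers has torus fibers by Arnold--Liouville), the isotropy is a rank-$2$ discrete subgroup. I define $\Lambda^{vert}_x \subset (\ker d\pi)_x$ to be the set of vertical vectors whose time-$1$ flow fixes $x$; this is a smoothly varying lattice in the vertical bundle.

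Next I would define $\Lambda^*_b \subset T^*_bB$ to be $\iota^{-1}(\Lambda^{vert}_x)$ for any $x \in \pi^{-1}(b)$, and verify independence of $x$: this follows because the flows generated by the pulled-back Hamiltonians commute with one another, so the isotropy subgroup is the same at every point of the fiber. Then $\Lambda \subset TB$ is defined by duality, $v \in \Lambda_b \iff \langle v, \xi \rangle \in \Z$ for all $\xi \in \Lambda^*_b$. Smoothness of $\Lambda^*$ (and hence of $\Lambda$) comes from the smoothness of the period functions of the commuting Hamiltonian flows, which varies smoothly with the base point over $B_0$.

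Finally, to match the standard lattices in standard local coordinates, I would invoke action--angle coordinates: near any regular fiber there exist coordinates $(p,q)$ on $\pi^{-1}(U) \cong U \times T^2$ in which $\omega = \sum dp_i \wedge dq_i$, the projection is $(p,q) \mapsto p$, and each $q_i$ is $\Z$-periodic. In these coordinates $X_{p_i \circ \pi} = -\partial/\partial q_i$, so the time-$1$ flow of $\partial/\partial q_i$ is the identity; thus $\Lambda^{vert}$ is generated by $\{\partial/\partial q_i\}$, $\Lambda^*$ by $\{dp_i\}$ (because $\iota(dp_i) = -\partial/\partial q_i$), and $\Lambda$ by $\{\partial/\partial p_i\}$. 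These are precisely the standard lattices.

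The one substantive step that requires care is the existence of action--angle coordinates (equivalently, that $\Lambda^{vert}$ is genuinely a lattice and varies smoothly). This is essentially the Arnold--Liouville theorem in the fibered setting, and is the only place where compactness of the fibers and the commuting nature of the pulled-back Hamiltonians must be combined; the rest of the proof is a formal bookkeeping of the duality $TB \leftrightarrow T^*B \leftrightarrow \ker d\pi$ induced by $\omega$.
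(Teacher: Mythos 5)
The paper does not prove this statement; it is quoted verbatim as background from Symington's paper \cite{Sym3}, so there is no in-paper argument to compare against. Your proposal is correct and follows the same route as the cited source: it is the standard Arnold--Liouville/Duistermaat construction, defining $\Lambda^{vert}$ as the isotropy lattice of the fiberwise $\R^n$-action generated by Hamiltonians pulled back from the base, transporting it to $\Lambda^{\ast} \subset T^{\ast}B$ via the isomorphism $T^{\ast}_bB \cong \ker d\pi_x$ induced by $\omega$ (using that the fibers are Lagrangian), and taking $\Lambda \subset TB$ to be the dual lattice, with action--angle coordinates exhibiting all three as the standard lattices. The only quibble is logical order in your first paragraph: verticality of $X_{f\circ\pi}$ follows directly from $\omega(X_{f\circ\pi},v)=df(d\pi(v))=0$ for $v\in\ker d\pi$ together with $(\ker d\pi)^{\omega}=\ker d\pi$, and the vanishing of the Poisson brackets is then a consequence rather than the premise; this is cosmetic and does not affect the argument.
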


\noindent This induced lattice $\Lambda$ on the tangent bundle of the base $B$, as above, gives $B$ an \textit{integral affine structure} $\A$.

\begin{prop}
\cite{Sym3} An $n$-manifold $B$ admits an integral affine structure if and only if it can be covered by coordinate charts $\left\{U_i,h_i \right\}$, $h_i : U_i \rightarrow \R^n$ such that the map $h_j \circ h_i^{-1}$, wherever defined, is an element of $AGL(n,\Z)$, i.e. a map of the form $\Phi(x) = Ax + b$ where $A \in GL(n,\Z)$ and $b \in \R^n$.
\end{prop}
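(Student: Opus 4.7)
The plan is to prove both directions locally, using that $GL(n,\Z)$ is a discrete subgroup of $GL(n,\R)$.

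For the forward direction, assume $B$ carries an integral affine structure, i.e., a lattice $\Lambda \subset TB$ arising as in the previous theorem. I would first produce, around each point $p \in B$, a coordinate chart $h_i: U_i \to \R^n$ in which $\Lambda|_{U_i}$ is spanned by the coordinate vector fields $\partial/\partial x_1, \ldots, \partial/\partial x_n$. This is the step that uses the actual content of the theorem preceding the proposition: the lattice generators can be written locally as $\partial/\partial a_k$ for action coordinates $a_k$ coming from integration of period $1$-forms along cycles in the Lagrangian fiber, so the generating sections commute as vector fields and can be straightened simultaneously. Call such a chart a \emph{lattice chart}.

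Given two overlapping lattice charts $h_i, h_j$, consider the transition $\phi = h_j \circ h_i^{-1}$. At every point $x$ of the overlap, $d\phi_x$ sends the standard lattice $\Z^n \subset T_x \R^n$ to itself, because both $h_i$ and $h_j$ push $\Lambda$ forward to the standard lattice. Hence $d\phi_x \in GL(n,\Z)$ for all $x$. Since $GL(n,\Z)$ is discrete inside $GL(n,\R)$ and $x \mapsto d\phi_x$ is continuous, $d\phi$ is locally constant, hence constant on each connected component of the overlap. Integrating, $\phi(x) = Ax + b$ for some $A \in GL(n,\Z)$ and $b \in \R^n$, i.e., $\phi \in AGL(n,\Z)$, as required.

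For the reverse direction, suppose $B$ admits an atlas $\{U_i, h_i\}$ with transitions in $AGL(n,\Z)$. On each $U_i$, define $\Lambda_i \subset TU_i$ by pulling back the standard lattice $\Z^n \subset T\R^n$ through $h_i$. On an overlap $U_i \cap U_j$, since $d(h_j \circ h_i^{-1}) \in GL(n,\Z)$ preserves the standard lattice, the pullback lattices $\Lambda_i$ and $\Lambda_j$ agree. Therefore the local lattices glue to a well-defined global lattice $\Lambda \subset TB$, giving $B$ an integral affine structure.

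The main obstacle, and the only step that is not purely formal, is the existence of the lattice charts in the forward direction; everything else follows from the discreteness of $GL(n,\Z)$ and a gluing argument. I would handle that existence either by quoting the Arnold--Liouville construction of action coordinates (in the Lagrangian fibration setting, which is how $\Lambda$ was defined in the previous theorem) or, if the integral affine structure is taken axiomatically to be a lattice locally isomorphic to the standard one, by direct invocation of the definition.
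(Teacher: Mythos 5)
The paper does not prove this proposition; it is quoted verbatim from Symington's work \cite{Sym3} as background for the almost-toric machinery, so there is no in-paper argument to compare yours against. On its own merits your proof is correct and is the standard one. The reverse direction (pulling back the standard lattice through each chart and gluing, using that the linear parts lie in $GL(n,\Z)$) is purely formal and complete as written. The forward direction is also sound: once you have lattice charts, the pointwise containment $d\phi_x \in GL(n,\Z)$ together with the discreteness of $GL(n,\Z)$ in $GL(n,\R)$ and the connectedness of each component of the overlap forces $d\phi$ to be constant, and integrating gives $\phi(x) = Ax + b$. You are right to flag the existence of lattice charts as the only substantive step; in the convention of this paper (and of Symington), the integral affine structure \emph{is} by definition a lattice $\Lambda \subset TB$ that is locally the standard lattice in suitable coordinates --- arising from the action coordinates of the Lagrangian fibration, whose generating sections are closed $1$-forms and hence locally exact and commuting --- so that step is available either by definition or by the Arnold--Liouville construction, exactly as you propose. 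The one cosmetic caveat is that the transition map is affine only on each connected component of $U_i \cap U_j$ (the constants $A$ and $b$ may differ between components), which you note and which is the intended reading of ``wherever defined.''
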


Symington denotes the toric (and almost-toric) bases with $(B,\A,\S)$, where $B$ is the polytope base in $\R^n$ (see Theorem~\ref{thm:polytope}), $\A$ is an \textit{integral affine structure}, and $\S$ is a natural stratification of the base $B$: the $l$-stratum is the set of points $b \in B$ such that $\pi^{-1}(b)$ is a torus of dimension $l$. Additionally, $\partial_R B$  denotes the collection of all the $k$-strata, with $k < n$, which is the \textit{reduced boundary} of the base $(B,\A,\S)$. Symington gives the following definition of the \textit{toric fibration and base}:

\begin{defn}
\cite{Sym3} A Lagrangian fibration $\pi: (M^4,\omega) \rightarrow (B,\A,\S)$ is a \textit{toric fibration} if there is a Hamiltonian $2$-torus action and an immersion $\Phi: (B,\A) \rightarrow (\R^2,\A_0)$ such that $\Phi \circ \pi$ is the corresponding moment map and $\S$ is the induced stratification. In this case we call $(B,\A, \S)$ a \textit{toric base}.
\end{defn}

Since we are looking to represent symplectic $4$-manifolds, we will be working with bases of dimension $2$, and with $2$, $1$ and $0$-strata. In other words, the $1$-stratum are the edges of our polytope $B$ in the plane, and the $0$-stratum are its vertices. Consequently, Symington's goal was to put the appropriate conditions on the base $(B,\A,\S)$ to ensure that it determines a unique symplectic $4$-manifold. To reconstruct a symplectic $4$-manifold from a toric base $(B,\A,\S)$, one can start with a regular Lagrangian fibration over $(B,\A)$ and collapse certain fibers to get the desired stratification $\S$. Symington does this with the help of \textit{symplectic boundary reduction}, introduced in \cite{Sym1}, which is defined in the proposition below:

\begin{prop}
Let $(M,\omega)$ be a symplectic manifold with boundary such that a smooth component $Y$ of $\partial M$ is a circle bundle over a manifold $\Sigma$. Suppose also that the tangent vectors to the circle fibers lie in the kernel of $\omega|_{Y}$. Then there is a projection $\rho : (M,\omega) \rightarrow (M',\omega')$ and an embedding $\phi: \Sigma \rightarrow M'$ such that $\rho(Y) = \phi(\Sigma)$, $\rho|_{M-Y}$ is a symplectomorphism onto $M' - \phi(\Sigma)$ and $\phi(\Sigma)$ is a symplectic submanifold. The manifold $(M',\omega') = \rho(M,\omega)$ is the \textbf{symplectic boundary reduction} of $(M,\omega)$ along $Y$.
\end{prop}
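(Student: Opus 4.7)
The plan is to realize $(M',\omega')$ as a Marsden--Weinstein-style reduction of a collar neighborhood of $Y$ by the natural $S^1$-action along the circle fibers, and to glue the result to $M \setminus Y$ across that collar.

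First I would verify that the hypothesis $\ker(\omega|_Y) = T\mathcal{F}$, where $\mathcal{F}$ denotes the circle fibration $\pi_Y : Y \to \Sigma$, forces $\omega|_Y$ to descend to a closed, non-degenerate $2$-form $\omega_\Sigma$ on $\Sigma$. Closedness follows from $d\omega = 0$, and non-degeneracy is automatic because the kernel of $\omega|_Y$ is exactly the vertical distribution. This sets up the symplectic base $(\Sigma,\omega_\Sigma)$ and will ensure $\phi^*\omega' = \omega_\Sigma$ once $\omega'$ is constructed.

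Next I would establish a normal form in a collar $Y \times [0,\epsilon) \subset M$ by a one-sided Moser argument. Picking a principal $S^1$-connection $\alpha \in \Omega^1(Y)$ on $Y \to \Sigma$ and extending the $S^1$-action to the collar trivially in the collar coordinate $r$, I want to show that $\omega$ is symplectomorphic, fixing $Y$, to the model
\begin{equation*}
\omega_{\mathrm{mod}} = \pi_Y^*\omega_\Sigma + d(r\,\alpha).
\end{equation*}
Both forms agree on $TM|_Y$, a primitive of $\omega_{\mathrm{mod}} - \omega$ can be chosen to vanish on $Y$, and so the Moser vector field vanishes on $Y$ and its flow is well defined on the one-sided collar. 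In this model $r$ plays the role of a moment map for the $S^1$-action and $r=0$ recovers $Y$, which is what makes the reduction legitimate.

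With the model in hand, I would define $M'$ set-theoretically as $(M \setminus Y) \sqcup \Sigma$ with the projection $\rho$ equal to the identity off $Y$ and equal to $\pi_Y$ on $Y$. The smooth structure near $\phi(\Sigma)$ comes from identifying the quotient of the collar, via the collapse map
\begin{equation*}
Y \times [0,\epsilon) \longrightarrow D_\epsilon(L), \qquad (y,r) \longmapsto [y,r],
\end{equation*}
with the $\epsilon$-disk bundle $D_\epsilon(L)$ of the associated line bundle $L = Y \times_{S^1} \C \to \Sigma$. Pushing $\omega_{\mathrm{mod}}$ through this collapse yields a smooth symplectic form on $D_\epsilon(L)$ that restricts to $\omega_\Sigma$ on the zero section; gluing it to $\omega$ on $M \setminus Y$ across the collar produces a global symplectic form $\omega'$ on $M'$, and by construction $\rho|_{M - Y}$ is a symplectomorphism onto $M' - \phi(\Sigma)$. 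The main obstacle throughout is the relative Moser step producing the model near $Y$: the hypothesis that the circle fibers span $\ker(\omega|_Y)$ is precisely what identifies the infinitesimal generator of the $S^1$-action with the characteristic direction of $\omega|_Y$ and forces $\omega$ and $\omega_{\mathrm{mod}}$ to agree on $TM|_Y$; once that is in place, the smoothness of $M'$ and the smooth extension of $\omega'$ across $\phi(\Sigma)$ are routine verifications read off from the explicit model.
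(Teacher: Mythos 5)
A preliminary remark: the paper does not prove this proposition at all --- it is quoted as background from Symington [Sy1] --- so there is no in-paper argument to compare yours against. Judged on its own terms, your architecture is the standard (and essentially Symington's) one: use the fact that the circle fibers span the characteristic line field of the hypersurface $Y$ to descend $\omega|_Y$ to a symplectic form $\omega_\Sigma$ on $\Sigma$, put $\omega$ into the normal form $\pi_Y^*\omega_\Sigma + d(r\alpha)$ on a one-sided collar by a relative Moser argument, and then collapse the fibers at $r=0$ to the zero section of the associated line bundle $L = Y\times_{S^1}\C$. All of that is the right plan.

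One step fails as literally written. The collapse map $(y,r)\mapsto [y,r]$ does not push $\omega_{\mathrm{mod}}$ forward to a smooth form on $D_\epsilon(L)$: in a local trivialization with $\alpha = d\theta + \pi^*\beta$, the term $d(r\alpha)$ contains $dr\wedge d\theta = \tfrac{1}{r}\,dx\wedge dy$, which blows up along the zero section, so with your identification $\omega'$ would not extend across $\phi(\Sigma)$. The moment-map coordinate $r$ must be matched to the fiber radius $\rho$ of $L$ by $r = \tfrac{1}{2}\rho^2$, i.e.\ the collapse map should be $(y,r)\mapsto [y,\sqrt{2r}\,]$; then $d(r\alpha) = d\bigl(\tfrac{1}{2}\rho^2 d\theta\bigr) + \cdots = dx\wedge dy + \cdots$ is smooth and nondegenerate at $\rho = 0$ and vanishes on the tangent space of the zero section, so $\omega'|_{\phi(\Sigma)} = \omega_\Sigma$ and $\phi(\Sigma)$ is indeed symplectic. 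Equivalently, the smooth structure on $M'$ near $\phi(\Sigma)$ is the one produced by Lerman's symplectic cut of the collar at the level $r=0$, not the naive polar-coordinate quotient. A smaller gloss: $\omega$ and $\omega_{\mathrm{mod}}$ do not agree on $TM|_Y$ automatically; you must first build the collar so that $\partial_r$ corresponds to the unique $Y$-transverse vector field $N$ with $\iota_N\omega|_{TY} = \alpha$ (such $N$ exists and is transverse because $\alpha(V)=1$ while $V$ spans $(TY)^{\omega}$). With these two repairs the argument goes through.
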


\noindent Connecting the above proposition to the toric bases $(B,\A,\S)$, Symington gives the following definition:

\begin{defn}
Given a toric fibration $\pi: (M^4,\omega) \rightarrow (B,\A,\S)$, the \textit{boundary recovery} is the unique Lagrangian fibered manifold $(B \times T^2, \omega_0)$ that yields $(M,\omega)$ via boundary reduction.
\end{defn}

\begin{figure}[ht!]
\centering
\includegraphics[scale = 1.2]{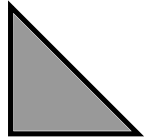}
\caption{Toric model of $\C P^2$}
\label{f:toriccp2}
\end{figure}

\begin{example}
A basic example is the toric base for a symplectic $4$-manifold diffeomorphic to $\C P^2$, which is a simple triangle with vertices on $(0,0)$, $(1,0)$ and $(0,1)$, as depicted in Figure~\ref{f:toriccp2}, with the bold edges representing the $1$-stratum. This base represents $\C P^2$ as the boundary reduction of $(B^4,\omega_0)$, where the circles of the Hopf fibration are collapsed.
\end{example}

In reading such diagrams, it is important to remember that the pre-image of each interior point in the diagram, is a torus, $S^1 \times S^1$, the pre-image of each point on the thick edges of the diagram (the 1-stratum) is a circle $S^1$, and the pre-image of each vertex in the diagram is just a point. Before describing the toric model of $C_n$, we first introduce the following important element of toric bases: 

\begin{defn}
\cite{Sym3} Let $\pi: (M,\omega) \rightarrow (B,\A,S)$ be a toric fibration and $\gamma$ a compact embedded curve with one endpoint $b_1$ in the $1$-stratum of $\partial_R B$ (both the $1$ and $0$-stratum in this case) and such that $\gamma - \left\{b_1\right\} \subset B_0 = B - \partial_R B$. Let $b_0$ be the other endpoint of $\gamma$. The \textit{collapsing class}, with respect to $\gamma$, for the smooth component of $\partial_R B$ containing $b_1$ is the primitive class $\mathbf{a} \in H_1(F_{b_0}; \Z)$ that spans the kernel of $\iota_{\ast} : H_1(F_{b_0}; \Z) \rightarrow H_1(\pi^{-1}(\gamma);\Z)$, where $\iota$ is the inclusion map. Corresponding to the \textit{collapsing class} is the \textit{collapsing covector}, with respect to $\gamma$, which is the primitive covector $v^{\ast} \in T^{\ast}_{b_0} B$ that determines vectors $v(x) \in T^{vert}_x M$ for each $x \in \pi^{-1} b_0$ such that the integral curves of this vector field represent $\mathbf{a}$.
\end{defn} 

\begin{figure}[ht!]
\labellist
\small\hair 2pt
\pinlabel $S_1$ at 27 3
\pinlabel $S_2$ at 50 7
\pinlabel $S_3$ at 62 10
\pinlabel $S_{n-1}$ at 77 23
\pinlabel $L(n^2,n-1)$ at 48 61
\endlabellist
\centering
\includegraphics[height = 60mm, width = 130mm]{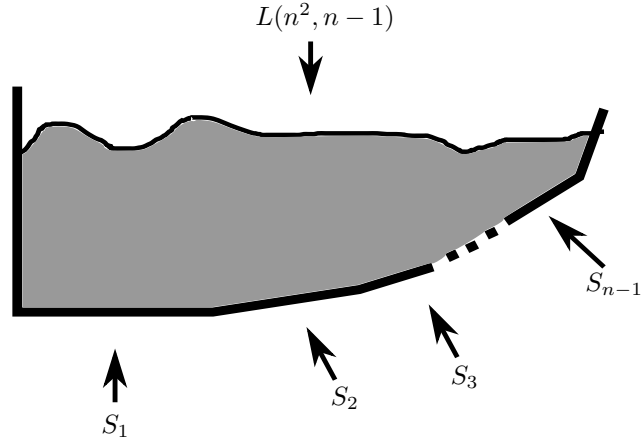}
\caption{Toric model for $C_n$}
\label{f:toriccn}
\end{figure}

\begin{example}
\label{ex:toriccn}
We can construct a toric fibration of the $C_n$ configuration of spheres, (see Figure~\ref{f:toriccn}). In this diagram, the slopes of the edges are $0, \frac{1}{n+2}, \frac{2}{2n+3}, \frac{3}{3n+4}, \ldots, \frac{n-1}{n^2}$, thus the corresponding collapsing covectors are: $v_1 = \left[\begin{array}{c}0\\1\end{array}\right], v_2 = \left[\begin{array}{c}-1\\n+2\end{array}\right], v_3 = \left[\begin{array}{c}-2\\2n+3\end{array}\right],  v_4 = \left[\begin{array}{c}-3\\3n+4\end{array}\right], \ldots,  v_n = \left[\begin{array}{c}1-n\\n^2\end{array}\right]$. Consequently, we have $v_{i+1} \times v_{i-1} = [S_i]^2$, giving us the desired self-intersection numbers of the spheres $S_i$.  

The pre-image of the (thin) curve on the top of the diagram is the boundary $\partial C_n = L(n^2,n-1)$, since the collapsing covectors on both endpoints of the curve are $v_0 = \left[\begin{array}{c}1\\0\end{array}\right]$ and $v_n = \left[\begin{array}{c}1-n\\n^2\end{array}\right]$.
\end{example}

Symington proves (Theorem 3.19, \cite{Sym3}) that such diagrams of toric bases $(B,\A,\S)$, determine unique toric manifolds, presented as the boundary reduction of $(B \times T^2,\omega_0)$ (assuming certain technical conditions, see \cite{Sym3} for details). The uniqueness of the toric manifold fibering over the base $(B,\A,\S)$, was shown earlier by \cite{BoMo}.

One can push these diagrams further, to depict Lagrangian fibrations with (nodal) singularities:

\begin{defn}
\cite{Sym3} A nondegenerate Lagrangian fibration $\pi: (M,\omega) \rightarrow B$ of a symplectic $4$-manifold is an \textit{almost-toric fibration} if it is a nondegenerate topologically stable fibration with no hyperbolic singularities (e.g. a fibration with a nodal singularity). A triple $(B,\A,\S)$ is an \textit{almost-toric base} if it is the base of such a fibration. A symplectic $4$-manifold equipped with such a fibration is an \textit{almost-toric manifold}.
\end{defn}

\noindent Thus if $\left\{s_i\right\} \subset B$ are the images of such singularities, then $\A$ is the affine structure on $B - \left\{s_i\right\}$. Generally, a Lagrangian fibration can be arranged such that nodal singularities occur in distinct fibers. Also, a nodal fiber is the singular fiber of a Lefschetz fibration, and its neighborhood is diffeomorphic to $T^2 \times D^2$ with a $(-1)$-framed two-handle attached along a simple closed curve in $T^2 \times \left\{x\right\}$. One can also compute the topological monodromy around the nodal fiber with respect to the basis $\left\{[\gamma_1],[\gamma_2]\right\} \in H_1(F_b;\Z)$:

$$
\begin{array}{c} \Psi(\gamma) \end{array}  = \begin{array}{c} A_{(1,0)} \end{array} = \left(\begin{array}{cc} 1 & 1 \\ 0 & 1\end{array}\right) \, . 
$$

\noindent If we choose a different basis for $H_1(F_b;\Z)$, then we conjugate the matrix $A_{(1,0)}$, giving us the following monodromy matrix with eigenvector $(a,c)$:

$$
\begin{array}{c} A_{(a,c)} \end{array} = \left(\begin{array}{cc} 1-ac & a^2 \\ -c^2 & 1+ac\end{array}\right) \, .
$$ 

\noindent This leads us to the following definition and lemma:

\begin{defn}
\cite{Sym3} Let $\pi:(M,\omega) \rightarrow B$ be an almost-toric fibration with a node at $s$. Let $\eta$ be an embedded curve with endpoints at $s$ and a point $b \in B_0 = B - \partial_R B$ such that $\eta - \left\{s\right\} \subset B_0$ contains no other nodes. A \textit{vanishing class}  in $H_1(F_b;\Z)$, associated to $s$ and $\eta$, is the class whose representatives bound a disk in $\pi^{-1}(\eta)$. The \textit{vanishing covector} $w^{\ast} \in T^{\ast}_b B$ is the primitive covector that determines vectors $w(x) \in T^{vert}_x M$ for each $x \in \pi^{-1}b$ such that the integral curves of this vector field represent the vanishing class. 
\end{defn}

\begin{lemma}
\cite{Sym3} Suppose $\gamma$ is a positively oriented loop based at $b$ that is the boundary of a closed neighborhood of $s$ containing $\eta$. Then the vanishing class is the unique class (up to scale) that is preserved by the monodromy along $\gamma$. With respect to the basis for $H_1(F_b;\Z)$ for which the monodromy matrix is $A_{(a,c)}$, the vanishing class is the class $(a,c)$.
\end{lemma}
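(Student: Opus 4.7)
The plan is to reduce to a local model near the nodal singularity, use the fact that the monodromy around such a singularity is a right-handed Dehn twist along the vanishing cycle, and then extract the two claims (uniqueness and the explicit form of the fixed class) by elementary linear algebra.

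First, I would localize the problem. As recalled just before the lemma, the preimage of a small disk neighborhood of $s$ is diffeomorphic to $T^2 \times D^2$ with a $(-1)$-framed two-handle attached along a simple closed curve $\alpha$ in a reference fiber $T^2 \times \{x\}$. Parallel-transporting along the arc $\eta$ identifies $\alpha$ with a representative of the vanishing class in $H_1(F_b;\Z)$: indeed, $\alpha$ bounds the core disk of the two-handle, and this core disk is a disk in $\pi^{-1}(\eta)$, which is the defining property of the vanishing class. Choose an oriented basis $\{[\gamma_1],[\gamma_2]\}$ of $H_1(F_b;\Z)$ in which $[\alpha] = (1,0)$.

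Second, I would invoke the identification of the monodromy around a nodal Lefschetz-type fiber with a positive Dehn twist along the vanishing cycle $\alpha$. The action of a right-handed Dehn twist on $H_1(T^2;\Z)$ is given by $x \mapsto x + \langle x, [\alpha]\rangle [\alpha]$, where $\langle\cdot,\cdot\rangle$ is the intersection pairing; in the chosen basis this is precisely $A_{(1,0)} = \left(\begin{smallmatrix} 1 & 1 \\ 0 & 1\end{smallmatrix}\right)$, matching the excerpt. Changing to any basis in which the vanishing class is a primitive $(a,c) \in \Z^2$ amounts to conjugating $A_{(1,0)}$ by a matrix of $\mathrm{SL}(2,\Z)$ sending $(1,0)$ to $(a,c)$; a direct computation gives $A_{(a,c)}$ as stated, and plugging in confirms $A_{(a,c)} (a,c)^T = (a,c)^T$, so the vanishing class is fixed.

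For uniqueness, the matrix $A_{(a,c)}$ has trace $2$ and determinant $1$, so its only eigenvalue is $1$; since $(a,c)$ is primitive, hence nonzero, $A_{(a,c)} \neq I$, and its Jordan normal form is $A_{(1,0)}$. Thus the fixed subspace of $A_{(a,c)}$ acting on $\R^2$ is exactly one-dimensional, spanned by $(a,c)$, and the only primitive elements of $H_1(F_b;\Z) \cong \Z^2$ preserved by the monodromy are $\pm(a,c)$. This simultaneously yields the identification of the vanishing class and the uniqueness (up to scale). The one step requiring care is matching orientation conventions: Symington's hypothesis that $\gamma$ is \emph{positively oriented} is precisely what pins down the right-handed Dehn twist (rather than its inverse $A_{(1,0)}^{-1}$), so I would verify that her orientation convention on the base agrees with the complex orientation used in the Picard--Lefschetz local model.
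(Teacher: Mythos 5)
Your argument is correct. Note that the paper itself gives no proof of this lemma --- it is quoted from Symington [Sym3] as background --- so there is nothing internal to compare against; but your route (identify the vanishing class with the attaching circle of the $(-1)$-framed two-handle via the thimble over $\eta$, invoke Picard--Lefschetz to get the right-handed Dehn twist $A_{(1,0)}$, conjugate to $A_{(a,c)}$, and use trace $2$, determinant $1$, $A_{(a,c)} \neq I$ to see the fixed space is exactly the line spanned by $(a,c)$) is the standard one and is exactly consistent with the local model and the monodromy matrices the paper records in the paragraphs immediately preceding the lemma. Your closing remark about pinning down the orientation convention (positively oriented $\gamma$ versus the inverse twist) is the right point to flag as the only place where care is needed.
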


\begin{figure}[ht!]
\centering
\includegraphics[height = 50mm, width = 50mm]{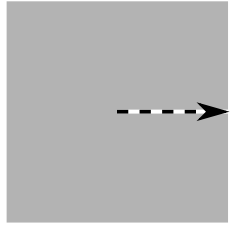}
\caption{Almost toric base}
\label{f:toricray}
\end{figure}

Notice, that given such an almost-toric fibration $\pi: (M,\omega) \rightarrow B$ with a node at $s$, the fibration over $B- \left\{s\right\}$ is regular and has an induced affine structure $\A$. However, there is non-trivial monodromy around the node $s$, thus there is no affine immersion of $(B - \left\{s\right\},\A)$ into $(\R^2,\A_0)$. To salvage this, we can remove a ray $R$, based at the node $s$, from the base $B$, giving us an immersion of $(B-R,\A)$ into $(\R^2,\A_0)$. An example of such a base $B$ with a removed ray $R$ is seen in Figure~\ref{f:toricray}. This ray (or eigenray) is an eigenvector of the monodromy matrix of the node. Symington shows, that any integral affine punctured plane $(V,\A)$ will be isomorphic to the one depicted in Figure~\ref{f:toricray}, where the ray $R$ is the eigenray $(1,0)$. Consequently, we can model an almost-toric manifold with bases $B$ containing nodes $s_i$:

\begin{defn}
\cite{Sym3} An \textit{integral affine manifold with nodes} $(B,\A)$ is a two-manifold $B$ equipped with an integral affine structure on $B - \left\{s_i\right\}$ such that each $s_i$ has a neighborhood $U_i$ such that $(U_i - s_i, \A)$ is affine isomorphic to a neighborhood of the puncture in $(V^k,\A^k)$ (if the node has multiplicity $k$).
\end{defn}

\begin{thm}
\cite{Sym3} Consider a triple $(B,\A,\S)$ such that $(B,\A)$ is an integral affine manifold with nodes $\left\{s_i\right\}^N_{i=1}$. Then $(B,\A,\S)$ is an almost-toric base if and only if every point in $B - \left\{s_i\right\}^N_{i=1}$ has a neighborhood that is a toric base.
\end{thm}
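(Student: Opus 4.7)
The plan is to establish the biconditional by proving each direction separately, relying on local normal-form theorems for singularities of Lagrangian fibrations and a cocycle-style gluing argument controlled by the integral affine structure with nodes.

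For the forward direction, I would take an almost-toric fibration $\pi:(M,\omega) \to (B,\A,\S)$ and a point $b \in B - \{s_i\}$. Because hyperbolic and nodal (focus-focus) singularities are excluded in a sufficiently small neighborhood $U$ of $b$, the only possible critical points of $\pi$ over $U$ are of elliptic type, with generic fibers regular Lagrangian tori. I would then invoke the Arnold--Liouville theorem for regular fibers together with Eliasson's local normal form for elliptic critical points to produce a local Hamiltonian $T^2$-action on $\pi^{-1}(U)$ whose moment map coincides with the affine immersion $U \hookrightarrow (\R^2,\A_0)$ induced by $\A$. By the definition of a toric fibration, this exhibits $U$ as a toric base.

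For the reverse direction, I would build an almost-toric fibration over $(B,\A,\S)$ by gluing local models. First, cover $B - \{s_i\}$ by toric patches $U_\alpha$; by the uniqueness statement recalled from \cite{Sym3} (and earlier \cite{BoMo}) after Example~\ref{ex:toriccn}, each $U_\alpha$ determines a symplectic toric fibration $(M_\alpha,\omega_\alpha) \to U_\alpha$ via boundary reduction of $(U_\alpha \times T^2,\omega_0)$, unique up to isomorphism. The overlaps $U_\alpha \cap U_\beta$ are again toric bases, and the same uniqueness gives canonical fiber-preserving symplectomorphisms that satisfy the cocycle condition on triple overlaps, assembling into a global fibration $(M_0,\omega_0) \to B - \{s_i\}$. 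Near each node $s_i$, the hypothesis that $(B,\A)$ is an integral affine manifold with nodes provides a punctured neighborhood $(U_i - s_i,\A)$ affinely isomorphic to $(V^k,\A^k)$; I would then glue in the standard almost-toric nodal model over $U_i$ (a neighborhood of $k$ focus-focus fibers with monodromy conjugate to $A_{(a,c)}$), using the matching of vanishing covectors along the overlap to define the gluing symplectomorphism.

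The main obstacle will be the gluing in the reverse direction, which rests on two naturality/uniqueness inputs. First, I need that a neighborhood of a nodal fiber is determined up to fiber-preserving symplectomorphism by its punctured affine base and multiplicity; this is the semi-global classification of focus-focus singularities, the almost-toric analog of Delzant's theorem (Theorem~\ref{thm:delzant}). Second, I need that the transition symplectomorphisms between toric patches, between nodal patches, and between a toric and a nodal patch all satisfy the cocycle condition; this reduces to verifying that the collapsing covectors along the $1$-stratum and the vanishing covector at each node are intrinsic to $(B,\A,\S)$, which is precisely what the integral affine structure with nodes records. Once these two inputs are in place, the gluing produces a well-defined almost-toric manifold $(M,\omega) \to (B,\A,\S)$, completing the proof.
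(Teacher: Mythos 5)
The first thing to note is that the paper does not prove this statement at all: it is quoted as background from Symington's article \cite{Sym3}, so there is no proof in the paper against which to compare your argument. Judged on its own terms, your forward direction is fine and is the standard argument: over a neighborhood $U$ of a point of $B - \left\{s_i\right\}$ there are no nodal or hyperbolic singularities, so only regular and elliptic points occur, and the Arnold--Liouville/Eliasson normal forms assemble into a local Hamiltonian $T^2$-action whose moment map is the affine immersion of $U$.

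The reverse direction, however, rests on two ``uniqueness inputs'' that are not true as stated, and this is a genuine gap. First, a fibered neighborhood of a focus-focus fiber is \emph{not} determined up to fiber-preserving symplectomorphism by its punctured affine base and multiplicity: by Vu Ngoc's semi-global classification, such germs carry an infinite-dimensional invariant (a formal power series), so there is no canonical nodal model and no ``almost-toric Delzant theorem'' of the kind you invoke. Second, the fiber-preserving symplectomorphisms between toric patches furnished by the Delzant/Boucetta--Molino uniqueness are only unique up to translation by Lagrangian sections, so the cocycle condition on triple overlaps is a genuine obstruction (Duistermaat's classification of Lagrangian fibrations over a fixed integral affine base); it does not follow from the collapsing and vanishing covectors being intrinsic to $(B,\A,\S)$. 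Both defects are repairable, because the theorem only asserts \emph{existence} of an almost-toric fibration: one needs only \emph{some} nodal model realizing the prescribed monodromy, and one must argue that the relevant gluing obstruction vanishes --- for instance by first cutting $B$ along the eigenrays emanating from the nodes so that the remaining base immerses affinely into $(\R^2,\A_0)$, pulling back the standard fibration, and regluing along the rays via the monodromy, which is essentially how Symington proceeds. As written, though, your proposal asserts rigidity statements that are false, and the cocycle verification it defers to is exactly where the content of the theorem lies.
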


Symington also defined various operations, like the \textit{nodal slide} and the \textit{nodal trade} to get from an almost toric base $(B,\A,\S)$ to another $(B',\A ', \S ')$, with both representing the same manifold with isotopic symplectic structures. Now we are ready to describe the almost-toric base for the rational homology balls $B_n$:

\begin{figure}[ht!]
\labellist
\small\hair 2pt
\pinlabel $s$ at 28 42
\pinlabel $\text{slope}\,\frac{1}{n}$ at 33 8
\pinlabel $\text{slope}\,\frac{n-1}{n^2}$ at 80 27
\endlabellist
\centering
\includegraphics[height = 60mm, width = 80mm]{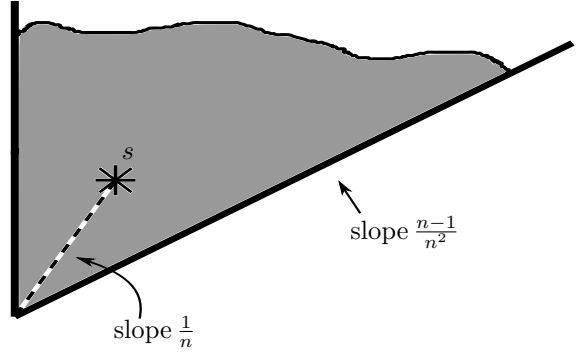}
\caption{Almost toric base for $B_n$}
\label{f:toricbn}
\end{figure}

\begin{example}
Figure~\ref{f:toricbn} depicts an almost-toric base for the rational homology balls $B_n$, in this diagram, the ray $R$ has a of slope of $\frac{1}{n}$, corresponding to the eigenvector $(n,1)$ of the monodromy 

$$
\begin{array}{c} A_{(n,1)} \end{array} = \left(\begin{array}{cc} 1-n & n^2 \\ -1 & 1+n\end{array}\right) \, .
$$

\noindent thus making $\left[\begin{array}{c} -1 \\ n\end{array}\right]$ be the vanishing covector of the node $s$. The slope of the line on the right is $\frac{n-1}{n^2}$, therefore the preimage of the thin line on the top of the diagram is $L(n^2,n-1)$ as was the case for the toric diagram for $C_n$.
\end{example}

Symington then proves that the symplectic rational blow-down can be performed in the symplectic category, by simply removing the images of the neighborhoods of the symplectic spheres from the toric model  of $C_n$ (Figure~\ref{f:toriccn}) and gluing below it, the almost-toric model for $B_n$ (Figure~\ref{f:toricbn}). They match up, since the slopes of the right-most edge is $\frac{n-1}{n^2}$, as illustrated in Figure~\ref{f:toricrbd}.

\begin{figure}[ht!]
\labellist
\small\hair 2pt
\pinlabel $L(n^2,n-1)$ at 60 100
\pinlabel $S_1$ at 33 56
\pinlabel $S_2$ at 55 58
\pinlabel $S_{n-1}$ at 103 60
\pinlabel $\text{remove}$ at 80 30
\pinlabel $L(n^2,n-1)$ at 220 145
\pinlabel $s$ at 190 45
\pinlabel $\text{slope}\,\frac{n-1}{n^2}$ at 285 110
\pinlabel $\text{slope}\,\frac{n-1}{n^2}$ at 240 30
\pinlabel $glue$ at 225 95
\pinlabel $L(n^2,n-1)$ at 370 120
\pinlabel $\text{slope}\,\frac{n-1}{n^2}$ at 400 50
\pinlabel $s$ at 348 65
\endlabellist
\centering
\includegraphics[height = 50mm, width = 130mm]{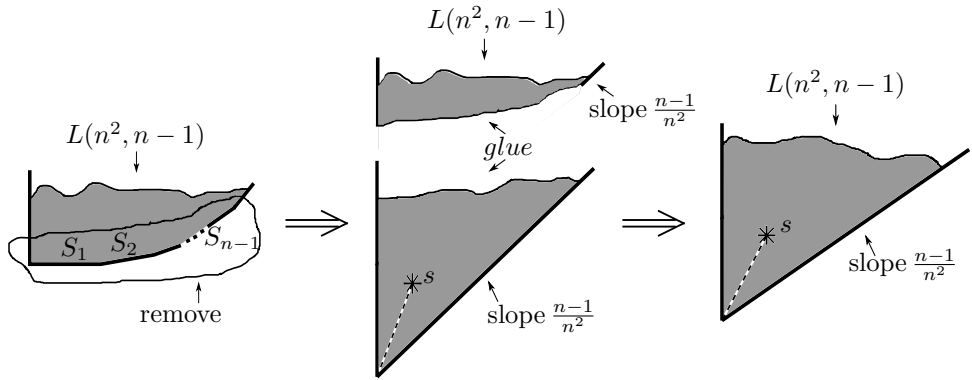}
\caption{Rational blow-down in almost-toric diagrams}
\label{f:toricrbd}
\end{figure} 

It is useful for our purposes to illustrate where on this almost-toric model of $B_n$ can we ``see" the image of the ``Lagrangian cores" $\L_n$ of the rational homology $B_n$ (see section~\ref{sec:bn} and \cite{Kh2}). Before we do this, we must first introduce the concept of \textit{visible surfaces} in these almost-toric fibrations, as was done in \cite{Sym3}.

If one draws a curve $\nu$ in an (almost)-toric base $B$, then the pre-image of every point $b \subset B_0$ in the curve will be a torus $F_b \cong S^1 \times S^1$. However, if for every point $b$ in the curve we choose a closed curve in $F_b \cong S^1 \times S^1$, then the entire collection of those closed curves over all points in the curve $\nu$ could potentially be a surface in the original $4$-manifold. This is precisely what \textit{visible surfaces} are, they are a coherent collection of such closed curves, in the pre-images of the points in a toric (almost-toric) base. Here is a more precise definition that Symington gives:

\begin{defn}
\label{def:vissurf}
\cite{Sym3} A \textit{visible surface} $\Sigma_{\nu}$ in an almost-toric fibered manifold $\pi: (M,\omega) \rightarrow (B,\A, \S)$ is an immersed surface whose image is an immersed (connected) curve $\nu$ with transverse self-intersections such that $\pi |_{\Sigma_{\nu} \cap \pi_{-1}(B_0)}$ is a submersion onto $\nu \cap B_0$, any non-empty intersection of $\Sigma_{\nu}$ with a regular fiber is a union of affine circles, and no component of $\partial \Sigma_{\nu}$ projects to a node. 
\end{defn}

The following are the conditions on curves in the base to represent a \textit{visible surface} and for a curve $\nu$ to represent a unique surface $\Sigma_{\nu}$:

\begin{defn}
\label{d:vissurf}
\cite{Sym3} Given an immersed curve $\nu : I \rightarrow (B,\A, S)$, let $\left\{\nu_i\right\}^k_{i=1}$ be the continuous (and connected) components of $\nu |_{\nu^{-1}(B-\partial_R B)}$. A primitive class $\mathbf{a}_i$ in $H_1(\pi^{-1}(\nu_i);\Z)$ (such that $\pi_{\ast}\mathbf{a}_i = 0$ if $\nu_i$ is a loop) is \textit{compatible} with $\nu$ if all of the following are satisfied:
\begin{enumerate}
\item
$\mathbf{a}_i$ is the \textit{vanishing class} of every node in $\nu$,

\item
$|\mathbf{a}_i \cdot \mathbf{c}| \in \left\{0,1\right\}$ for each $\mathbf{c}$ that is the collapsing class, with respect to $\overline{\nu_i}$, for a component of the $1$-stratum of $\partial_R B$ that intersects $\overline{\nu_i}$,

\item
$|\mathbf{a}_i \cdot \mathbf{c}|=1$ if $\overline{\nu_i}$ intersects the $1$-stratum non-transversally,

\item
$|\mathbf{a}_i \cdot \mathbf{d}|=1$ for each $\mathbf{d}$ that is one of the two collapsing classes at a vertex contained in the closure of $\nu_i$. (Here, $\cdot$ is the intersection pairing in $H_1(\pi^{-1}(\nu_i), \Z)$ and $\overline{\nu_i}$ is the closure of $\nu_i$.)
\end{enumerate}
\end{defn}

\begin{thm}
\cite{Sym3} Suppose $(B,\A,\S)$ is an almost-toric base such that each node has multiplicity one. An immersed curve $\nu: I \rightarrow (B,\A,\S)$ with transverse self-intersections and a set of compatible classes $\left\{\mathbf{a}_i\right\}^k_{i=1}$ together determine a visible surface $\Sigma_{\nu}$ such that for each $b \in \nu_i$, 
\begin{equation}
\label{eq:iota}
\iota_{\ast}[\Sigma_{\nu} \cap F_b] = \mathbf{a}_i
\end{equation}

\noindent where $\iota : F_b \rightarrow \pi^{-1}(\nu_i)$ is the inclusion map. (Note, we will not define the ``multiplicity" of a node here; all of the nodes that we will work with have ``mutliplicity" one, for details see \cite{Sym3}.) The surface $\Sigma_{\nu}$ is unique up to isotopy among visible surfaces in the preimage of $\nu$ that satisfy equation~\ref{eq:iota}. Furthermore, no such surface exists if the classes $\mathbf{a}_i$ are not compatible with $\nu$.
\end{thm}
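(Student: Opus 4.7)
The plan is to build $\Sigma_{\nu}$ stratum by stratum and then check uniqueness and necessity. Over each open arc $\nu_i \subset \nu \cap B_0$ the projection $\pi : \pi^{-1}(\nu_i) \to \nu_i$ is a regular Lagrangian torus bundle, so the primitive class $\mathbf{a}_i \in H_1(\pi^{-1}(\nu_i);\Z)$ specifies a primitive direction in the vertical lattice $\Lambda^{\mathrm{vert}}_b$ at each fiber $F_b$. The integral affine structure on the vertical tangent bundle then determines, up to translation, a unique affine circle in each $F_b$ representing this direction, and propagating these circles along $\nu_i$ sweeps out a smooth embedded circle bundle over $\nu_i$ inside $\pi^{-1}(\nu_i)$. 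When $\nu_i$ is a loop, global consistency is automatic because $\mathbf{a}_i$ is given as a single class in $H_1(\pi^{-1}(\nu_i);\Z)$ and is therefore monodromy-invariant.

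The heart of the argument is extending the surface across the singular points of the stratification, where one works in a standard local model of the fibration. At a point of the $1$-stratum of $\partial_R B$ with collapsing class $\mathbf{c}$: if $|\mathbf{a}_i \cdot \mathbf{c}| = 1$, the local picture is a solid torus in which $\mathbf{c}$ is the meridian, and the $\mathbf{a}_i$-circle is isotopic to a $(\pm 1, k)$ curve which bounds an embedded disk, producing a smooth cap; if instead $\mathbf{a}_i = \pm \mathbf{c}$ (so $|\mathbf{a}_i \cdot \mathbf{c}| = 0$) the $\mathbf{a}_i$-circles themselves collapse with the meridian, again closing off $\Sigma_\nu$ in a disk. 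Condition (3) excludes exactly the non-transverse configuration in which neither type of cap is available. At a vertex with collapsing classes $\mathbf{d}_1, \mathbf{d}_2$ the local model is $\C^2$ fibered over a quadrant, and $|\mathbf{a}_i \cdot \mathbf{d}_j| = 1$ for $j = 1, 2$ is exactly the condition that the two approaching circle bundles glue to a single smoothly embedded disk at the vertex. At a node the local model is a neighborhood of a Lefschetz-type singular fiber; here only multiples of the vanishing class bound disks, so condition (1) is both needed and sufficient. Transverse self-intersections of $\nu$ in the base produce transverse self-intersections of $\Sigma_\nu$ upstairs after a generic choice of translation parameter in the fibers.

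For uniqueness, note that the fiberwise condition $\iota_{\ast}[\Sigma_{\nu} \cap F_b] = \mathbf{a}_i$ pins down $\Sigma_{\nu} \cap F_b$ as a primitive affine $1$-cycle in $F_b$ up to translation, so any two visible surfaces satisfying (\ref{eq:iota}) differ by a fiberwise translation that can be smoothed out along each $\nu_i$ and, by the local models, continued across the singular points — yielding an ambient isotopy supported in the vertical bundle. Necessity is obtained by reversing each local cap argument: at any singular point of $\nu$, failure of the corresponding combinatorial condition on $\mathbf{a}_i$ leaves the family of $\mathbf{a}_i$-circles with no homological filling inside the local model, so no closed smooth extension to a visible surface exists. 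The main obstacle is the case-by-case local analysis at vertices and nodes, where one must verify that the intersection-number bounds $|\mathbf{a}_i \cdot \mathbf{c}|, |\mathbf{a}_i \cdot \mathbf{d}| \in \{0,1\}$ and the vanishing-class condition emerge as precisely the necessary and sufficient compatibility constraints.
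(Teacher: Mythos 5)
First, a point of reference: the paper does not prove this statement at all --- it is quoted from Symington \cite{Sym3} as part of the background on almost-toric fibrations used in Step 3 --- so there is no in-paper argument to compare yours against. Judged on its own, your outline has the architecture one would expect (and that Symington's proof in fact has): sweep out a circle bundle over each arc $\nu_i$ from the affine circles determined by $\mathbf{a}_i$, verify in standard local models that conditions (1)--(4) of Definition~\ref{d:vissurf} are exactly what is needed to extend across nodes, the $1$-stratum and the vertices, and get uniqueness from the fact that the affine circles in a fixed primitive class in a fixed fiber form a connected family. Your treatment of the vertex case (the class $\mathbf{a}_i$ pairing to $\pm 1$ with both collapsing classes yields a smooth disk through the fixed point, as for $\left\{w=c\bar{z}\right\}$ in the model $\C^2$) and of the node case (only the vanishing class bounds a thimble) is essentially right.

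However, the local analysis at the $1$-stratum --- which you yourself call the heart of the argument --- is wrong in one of its two cases, and the error propagates into your necessity and uniqueness arguments. In the solid torus $\pi^{-1}(\overline{\gamma})$ over a transversal arc $\gamma$ ending on the $1$-stratum, the collapsing class $\mathbf{c}$ is the meridian. A curve in a class $\mathbf{a}_i$ with $|\mathbf{a}_i\cdot\mathbf{c}|=1$ has the form $\pm[\mathrm{core}]+k[\mathrm{meridian}]$; it generates $H_1$ of the solid torus and therefore bounds no embedded disk there, contrary to your claim that it ``bounds an embedded disk, producing a smooth cap.'' It is the other case, $\mathbf{a}_i=\pm\mathbf{c}$ (so $|\mathbf{a}_i\cdot\mathbf{c}|=0$), in which the circles are meridians and the surface caps off with a meridian disk, meeting the preimage of the edge transversally in a single point. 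When $|\mathbf{a}_i\cdot\mathbf{c}|=1$ the family of affine circles instead limits onto the degenerate core fiber: the surface either acquires a boundary circle lying in the preimage of the $1$-stratum or continues through it as a fold meeting each nearby regular fiber in two affine circles --- which is also why condition (3) requires $|\mathbf{a}_i\cdot\mathbf{c}|=1$ precisely when $\overline{\nu_i}$ meets the $1$-stratum non-transversally. Relatedly, for $|\mathbf{a}_i\cdot\mathbf{c}|\geq 2$ the obstruction is not the absence of a ``homological filling'' as you assert, but that the circles converge to a multiple cover of the core, so the closure of the circle family fails to be an immersed surface over the degenerate fiber. Since both your uniqueness and your necessity steps are phrased as ``reverse the local cap argument,'' the miscast local model needs to be redone before the outline can be called a proof.
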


To each primitive class $\mathbf{a}_i \in H_1(\pi^{-1}(\nu_i);\Z)$ there is corresponding \textit{compatible vector} $v_i \in \R^2$ such that the integral curves of the vector field $v_i \frac{\partial}{\partial q} \subset \Lambda^{vert}$ represent $\mathbf{a}_i$. If $v$ and $w$ are compatible vectors for primitive classes $\mathbf{a}$ and $\mathbf{b}$ respectively, then $|\mathbf{a} \cdot \mathbf{b} | = |v \times w| = |det(vw)|$. Symington also shows that if curves $\nu_1$ and $\nu_2$ intersect transversally at a point $b \in B_0$ and $\Sigma_{\nu_1}$ and $\Sigma_{\nu_2}$ intersect transversally in $F_b$, then $\Sigma_{\nu_1}$ intersects $\Sigma_{\nu_2}$ in $|v_1 \times v_2|$ points where the signs of all intersections is $det(u_1 u_2) det(v_1 v_2)$. Here, $v_i$ are the compatible vectors of $\nu_i$ and the $u_i$ are the tangent vectors of $\nu_i$ at the point $b$.

In \cite{Sym3}, it is proved that one can compute the symplectic area of the \text{visible surfaces} as follows:

\begin{prop}
\label{p:vissurfarea}
Let $\nu : I \rightarrow (B,\A,\S)$ be a parameterized immersed curve and $\left\{v_i\right\}^N_{i=1}$ a set of co-oriented compatible vectors in a base diagram that define an oriented surface $\Sigma_{\nu}$. The the (signed) area of $\Sigma_{\nu}$ is:
\begin{equation}
\label{eq:vissurfarea}
\text{Area}(\Sigma_{\nu}) = \int_{\Sigma_{\nu}} \omega = 2\pi \int^1_0 \nu'(t) \cdot v(t)dt
\end{equation}

\noindent where $v(t) = v_i$, if $\nu(t) \in \nu_i$ and for other values of $t$ (when $\nu \subset \partial_R B$) $v(t)$ is an integral vector such that $u(t) \times v(t) = 1$ for some integral vector $u(t) = \lambda\nu'(t)$, $\lambda > 0$.
\end{prop}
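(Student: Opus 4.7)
The plan is to reduce to a local computation in action--angle (Arnold--Liouville) coordinates, where the formula becomes an almost trivial Fubini calculation, and then handle the reduced boundary and node points by continuity.

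First, I would work on the open set where $\nu(t) \in B_0 = B - \partial_R B$ and away from nodes. There, the induced integral affine structure $\A$ on $B$ together with the lattices $\Lambda, \Lambda^{\ast}, \Lambda^{vert}$ furnished by the earlier theorem give local Darboux coordinates $(p_1,p_2,q_1,q_2)$ on $M$ in which $\omega = dp_1 \wedge dq_1 + dp_2 \wedge dq_2$, the projection $\pi$ is $(p,q)\mapsto p$, and the vertical lattice $\Lambda^{vert}$ is spanned by $\partial/\partial q_1, \partial/\partial q_2$. By definition of the compatible vector $v_i = (v_i^{(1)}, v_i^{(2)})$ attached to the primitive class $\mathbf{a}_i \in H_1(\pi^{-1}(\nu_i);\Z)$, the integral curves of the vector field $v_i^{(1)}\partial/\partial q_1 + v_i^{(2)}\partial/\partial q_2$ in each torus fiber represent $\mathbf{a}_i$. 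So $\Sigma_{\nu}$ can be parametrized by
\begin{equation*}
\Phi : (t,\theta) \longmapsto \bigl(\nu(t),\, v(t)\,\theta\bigr), \qquad t \in I,\ \theta \in [0,2\pi],
\end{equation*}
with $v(t)=v_i$ on $\nu_i$.

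Second, I would pull $\omega$ back along $\Phi$. Since $p_j\circ\Phi = \nu_j(t)$ and $q_j\circ\Phi = v^{(j)}(t)\,\theta$, one has $dp_j = \nu_j'(t)\,dt$ and $dq_j = v^{(j)}(t)\,d\theta + \theta \,(v^{(j)})'(t)\,dt$ along $\Phi$. The $dt\wedge dt$ terms vanish, and only the wedges involving $d\theta$ contribute to the pullback of $\omega$:
\begin{equation*}
\Phi^{\ast}\omega \;=\; \bigl(\nu_1'(t)\,v^{(1)}(t) + \nu_2'(t)\,v^{(2)}(t)\bigr)\, dt\wedge d\theta \;=\; \bigl(\nu'(t)\cdot v(t)\bigr)\, dt\wedge d\theta.
\end{equation*}
Integrating over $\theta \in [0,2\pi]$ and $t\in I$ yields the claimed identity $\int_{\Sigma_\nu}\omega = 2\pi \int_0^1 \nu'(t)\cdot v(t)\,dt$ on this open set.

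Third, I would extend the formula across the exceptional values of $t$, namely the finitely many $t$ for which $\nu(t)$ lies in the reduced boundary $\partial_R B$ or at a node. At a node, the compatibility condition (1) of Definition~\ref{d:vissurf} forces $\mathbf{a}_i$ to be the vanishing class, so the circle $\Sigma_\nu \cap F_{\nu(t)}$ collapses to a point and the integrand $\nu'(t)\cdot v(t)$ is bounded, so such isolated $t$ contribute nothing and both sides are continuous through them. At points where $\nu(t)$ lies in the $1$- or $0$-stratum, one of the collapsing covectors at $\nu(t)$ determines how the fiber shrinks; the prescription $u(t)\times v(t)=1$ with $u(t)=\lambda\nu'(t)$ picks out the unique integral $v(t)$ extending the compatible vectors on either side so that the parametrization $\Phi$ extends continuously to a smooth map onto the actual visible surface $\Sigma_\nu$ (the circle collapses but its area contribution, governed by $\nu'(t)\cdot v(t)$, remains the correct boundary term). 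Since both sides of the equality are continuous in $t$ and agree on the complement of a finite set, they agree on all of $I$.

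The main obstacle I anticipate is precisely this last paragraph: checking that the ansatz $\Phi$ really extends to a \emph{smooth} (and orientation-preserving) parametrization across $\partial_R B$ and across nodes, and that the definition $u(t)\times v(t)=1$ on the boundary is forced rather than an additional choice. This is essentially a local model computation at each collapsing stratum and at each nodal fiber, using the symplectic boundary reduction model and the $A_{(a,c)}$ local model for nodes; once those normal forms are in place, the resulting contributions to $\int \Phi^{\ast}\omega$ are handled by continuity as above, so the formula in the interior extends to the stated integral over all of $I$.
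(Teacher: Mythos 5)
This proposition is quoted by the paper from Symington \cite{Sym3} and no proof is given in the paper itself, so there is no in-paper argument to compare against; your proposal has to be judged on its own terms (and against Symington's, which is the same action--angle computation you carry out). Your first two steps are correct and complete for the portion of $\nu$ lying in $B_0$ away from nodes: a visible surface meets each regular fiber in affine circles, so it is parameterized by $(t,\theta)\mapsto(\nu(t),\theta v(t)+q_0(t))$, the translate $q_0(t)$ drops out of the pullback, and $\Phi^{\ast}\omega=(\nu'(t)\cdot v(t))\,dt\wedge d\theta$ integrates to the stated formula. The node case is also fine: those are isolated $t$-values where the fiber circle (in the vanishing class) degenerates, and continuity handles them.

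The genuine gap is in your treatment of $\partial_R B$. You assume $\nu^{-1}(\partial_R B)$ is a finite set of parameter values and conclude that both sides ``agree on the complement of a finite set,'' but the clause in the statement reading ``for other values of $t$ (when $\nu\subset\partial_R B$)'' is there precisely because $\nu$ is allowed to run along the $1$-stratum for whole subintervals of $I$. Over such a subinterval the visible surface is a piece of the reduced symplectic surface produced by symplectic boundary reduction, and its area is a strictly positive contribution that cannot be absorbed by continuity from the interior; it must be computed in the local model for boundary reduction, where collapsing the circles in the collapsing class $\mathbf{c}$ leaves a surface whose fiber direction is only well defined modulo $\mathbf{c}$, and the normalization $u(t)\times v(t)=1$ with $u(t)=\lambda\nu'(t)$ is exactly the condition that selects the integral vector measuring one full residual circle, so that $\nu'(t)\cdot v(t)$ again gives the correct area density. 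You correctly identify this local-model verification as the main obstacle in your final paragraph, but the preceding claim that these $t$ form a null set (and hence ``contribute nothing'') contradicts it and is false in general; the boundary segments carry area and the computation there is a separate, if routine, step rather than a limit of the interior one.
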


\begin{figure}[ht!]
\labellist
\small\hair 2pt
\pinlabel $s$ at 28 42
\pinlabel $\upsilon$ at 20 43
\endlabellist
\centering
\includegraphics[height = 60mm, width = 80mm]{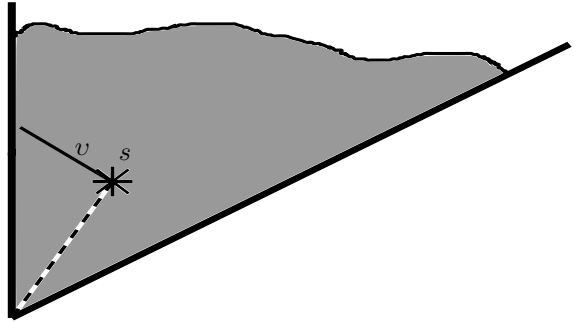}
\caption{``visible" $\L_n$ in almost toric base for $B_n$}
\label{f:toriccore}
\end{figure}

\begin{rmk}
\label{r:toriccore}
Note, that given such conditions for a visible surface, $\Sigma_{\nu}$ must be a sphere, disk, cylinder or torus. Therefore, if we want to ``see" a Lagrangian core $\L_n$ in the almost-toric base for $B_n$, we can only really ``see" where $\L_n$ is an embedding, in other words, a Lagrangian disk in $\L_n$, right before the edge of the disk hits the singular part of $\L_n$. Proposition~\ref{p:vissurfarea} implies that in order for a visible surface $\Sigma_{\upsilon}$ to be Lagrangian, we must have that $\upsilon$ is a straight line. The line $\upsilon$ in Figure~\ref{f:toriccore}, extending from the node and (almost) hitting the left edge of the $1$-stratum, represents a Lagrangian visible surface $\Sigma_{\upsilon}$. Since the line $\upsilon$ hits a node, its compatible covector must correspond to the vanishing covector of the node, which is $v = \left[\begin{array}{c} -1 \\ n\end{array}\right]$. Notice, if $\upsilon$ were to actually hit the left edge of the $1$-stratum, then this would violate condition $(2)$ of Definition~\ref{d:vissurf}, since $|v \times c| = n$, where $c$ is the collapsing covector of the left edge of the $1$-stratum. As a result, $\Sigma_{\upsilon}$ can represent the Lagrangian core $\L_n$ of $B_n$, as introduced in section~\ref{sec:bn}, since the boundary of the 2-cell $D^2$ in $\L_n$ wraps around $n$ times the $S^1$ in $\L_n$.
\end{rmk}

\section{Proof of Main Theorem}
\label{sec:sympemb}

We now again present the statement of the main theorem on symplectic embeddings of $B_n$, which appears in the introduction, preceded by the following crucial proposition and some definitions and terminology.

\begin{prop}
\label{p:sympemb}
Let $(X,\omega)$ be a symplectic $4$-manifold, such that:
\begin{itemize}
\item
$b_2^+(X) > 1$,
\item
$\left[c_1(X,\omega)\right] = -\left[\omega \right]$ as cohomology classes and
\item
$n \geq c_1^2(X,\omega) + 2$.
\end{itemize}

\noindent If there exists a symplectic embedding $B_n \hookrightarrow (X,\omega)$ and $(X',\omega')$ is the symplectic rational blow-up of $(X,\omega)$, then there exists an embedded symplectic sphere $\Sigma_{-1} \subset (X',\omega')$, and a linear plumbing configuration $C_n \subset (X',\omega')$ of symplectic spheres $S_j$, $1 \leq j \leq n-1$, such that:
\begin{itemize}
\item
$[\Sigma_{-1}]^2= -1$,
\item
$[S_1]^2 = -n-2$ and $[S_j]^2 = -2$ for $ 2 \leq j \leq n-1$ (see Figure~\ref{f:cn}) and
\item
$\Sigma_{-1}$ intersects the spheres $S_j$, $1 \leq j \leq n-1$ positively and transversally.
\end{itemize}

\end{prop}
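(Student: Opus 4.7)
The plan is to show that the symplectic rational blow-up $(X',\omega')$ satisfies $c_1^2(X',\omega')\leq -1$, to invoke Corollary~\ref{cor:taubes} in order to extract an embedded symplectic $(-1)$-sphere, and finally to use positivity of intersections for pseudoholomorphic curves to guarantee the required transverse positive intersection pattern with the plumbing configuration $C_n\subset X'$ produced by the blow-up.

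First I would compute $c_1^2(X',\omega')$ via the identity $c_1^2=2\chi+3\sigma$, valid for every symplectic $4$-manifold. Passing from $(X,\omega)$ to $(X',\omega')$ excises $B_n$ and glues in $C_n$; using $\chi(B_n)=1$, $\sigma(B_n)=0$, together with $\chi(C_n)=n$ (one $0$-handle and $n-1$ $2$-handles) and $\sigma(C_n)=-(n-1)$ (since $C_n$ is negative definite of rank $n-1$), I get $c_1^2(X',\omega')-c_1^2(X,\omega)=2(n-1)-3(n-1)=-(n-1)$. The hypothesis $n\geq c_1^2(X,\omega)+2$ therefore forces $c_1^2(X',\omega')\leq -1$. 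Moreover $b_2^+(X')=b_2^+(X)>1$, since neither the negative-definite $C_n$ nor the rational homology ball $B_n$ contributes to $b_2^+$.

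Next, by Corollary~\ref{cor:taubes} applied to $(X',\omega')$, for a generic $\omega'$-compatible almost-complex structure $J_\varepsilon$ there is a $J_\varepsilon$-holomorphic embedded sphere $\Sigma_{-1}^{\varepsilon}\subset X'$ with $[\Sigma_{-1}^{\varepsilon}]^2=-1$. To arrange the intersection pattern with $C_n$, I would restrict $J_\varepsilon$ to the subspace $\mathcal{J}_{C_n}$ of $\omega'$-compatible almost-complex structures making every $S_j$ holomorphic; this subspace is nonempty because $C_n$ is a symplectic plumbing, and a standard transversality argument ensures that the genericity conditions underlying Theorems~\ref{thm:Tmin} and~\ref{thm:Tmain} still cut out a residual subset of $\mathcal{J}_{C_n}$. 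With such a $J_\varepsilon$, both $\Sigma_{-1}^{\varepsilon}$ and all the $S_j$ are $J_\varepsilon$-holomorphic; the inequality $[\Sigma_{-1}^{\varepsilon}]^2=-1\neq[S_j]^2\in\{-n-2,-2\}$ prevents $\Sigma_{-1}^{\varepsilon}$ from sharing a component with any $S_j$, so positivity of intersections for distinct pseudoholomorphic curves in a $4$-manifold forces every intersection $\Sigma_{-1}^{\varepsilon}\cap S_j$ to be positive, while genericity of $J_\varepsilon$ makes these intersections transverse. Setting $\Sigma_{-1}:=\Sigma_{-1}^{\varepsilon}$, together with the $C_n$ supplied by the blow-up construction, then yields the claimed configuration.

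The main obstacle I foresee is the simultaneous choice of $J_\varepsilon$ in the previous paragraph: one must verify that restricting to $\mathcal{J}_{C_n}$ does not destroy the Baire-residual conditions that feed into Corollary~\ref{cor:taubes}, so that the Taubes-theoretic sphere can be produced by a $J_\varepsilon$ compatible with the entire plumbing. Beyond this, the argument is essentially bookkeeping: additivity of $\chi$ and $\sigma$ across the gluing locus together with the already-cited results of Taubes do all the substantive work.
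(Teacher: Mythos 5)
Your reduction to $c_1^2(X',\omega')\leq -1$ and the appeal to Corollary~\ref{cor:taubes} match the paper exactly, and you have correctly identified where the real difficulty lies. But the resolution you propose for that difficulty does not work as stated, and it is precisely the point where the paper has to do several pages of work. Taubes' results (Theorems~\ref{thm:Tmain} and~\ref{thm:Tmin}, hence Corollary~\ref{cor:taubes}) produce the pseudo-holomorphic $(-1)$-sphere only for a \emph{generic} compatible $J$ in the full space of compatible almost-complex structures; there is no ``standard transversality argument'' showing that the conclusion survives when one restricts to the subspace $\mathcal{J}_{C_n}$ of structures preserving the plumbing. The issue is not merely transversality of a single moduli space: for a non-generic $J$ the class that Taubes represents can degenerate into a multiply-covered or reducible cusp-curve, and nothing in the SW$\Rightarrow$Gr machinery guarantees an embedded $(-1)$-sphere for generic $J\in\mathcal{J}_{C_n}$. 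Asserting residuality inside $\mathcal{J}_{C_n}$ is exactly the gap.

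The paper's route around this is different and is the substance of Step 1. It first constructs (Lemma~\ref{l:jholocn}) a single $\omega$-compatible $J$ making all the $S_j$ holomorphic --- using the McDuff--Polterovich isotopy to orthogonal intersections plus a linear local lemma, with compatibility (not just tameness) needed for the next step. It then takes a sequence of generic $J_{\epsilon}\rightarrow J$, applies Gromov compactness to the Taubes spheres $\Sigma^{-1}_{\epsilon}$, and proves by a lengthy inductive adjunction argument (Proposition~\ref{p:grcomp1}, using Corollary~\ref{cor:adjformula}, Lemma~\ref{l:ss} and the generalized adjunction formula) that at least one component of the limiting multicurve is an embedded $J$-holomorphic $(-1)$-sphere. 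Finally, transversality of the intersections is not obtained from genericity but from the Li--Usher perturbation lemma (Lemma~\ref{l:lius}); positivity of intersections alone permits tangencies of higher local multiplicity, so this perturbation step is also needed in your argument. To repair your proof you would either have to justify the residuality claim in $\mathcal{J}_{C_n}$ (which is not available in the literature cited) or follow the degeneration-and-limit analysis that the paper carries out.
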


\begin{defn}
\label{d:bntype}
We call a symplectic embedding of $B_n \hookrightarrow (X,\omega)$ to be of \textit{type} $\langle \alpha_1, \alpha_2, \alpha_3, \ldots , \alpha_{n-1} \rangle$, where $\alpha_j \in \Z_{\geq 0}$, if there exists an embedded symplectic sphere, $\Sigma \subset X'$, with $[\Sigma]^2 = -1$, such that it intersects positively and transversally with the spheres $S_j$, $1 \leq j \leq n-1$, of the $C_n$ configuration in $X'$ and $\alpha_j$ is the number of those positive transverse intersections. 
\end{defn}

\begin{defn}
\label{d:bntypeA}
Let $\A$ be the set of $(n-1)$-tuples $\langle \alpha_1, \alpha_2, \alpha_3, \ldots , \alpha_{n-1} \rangle$ such that:
\begin{enumerate}
\item
$\alpha_j \neq 0$ for at least one $j$, where $2 \leq j \leq n-1$, or

\item
$\alpha_1 \geq n$, or

\item
$\alpha_1 = 1$ and $\alpha_j =0$ for $2 \leq j \leq n-1$.
\end{enumerate}
We will call a symplectic embedding $B_n \hookrightarrow X$ to be of \textit{type} $\A$ if it is of type $\langle \alpha_1, \alpha_2, \alpha_3, \ldots, \alpha_{n-1} \rangle \subset \A$.
\end{defn}

\begin{defn}
\label{d:bntypeEk}
Let $\E_k$ denote the $(n-1)$-tuple $\langle k, 0, 0, \ldots, 0 \rangle$ for $2 \leq k \leq n-1$. 
\end{defn}

We note that Proposition~\ref{p:sympemb} implies that a symplectic embedding $B_n \hookrightarrow (X,\omega)$ (for $b_2^+(X) > 1$, $\left[c_1(X,\omega)\right] = -\left[\omega \right]$ and $n \geq c_1^2(X,\omega) + 2$) will always be of type $\langle \alpha_1, \alpha_2, \alpha_3, \ldots, \alpha_{n-1} \rangle$, for some $(n-1)$-tuple $\langle \alpha_1, \alpha_2, \alpha_3, \ldots, \alpha_{n-1} \rangle$ with $\alpha_j \in \Z_{\geq 0}$ . Moreover, any $(n-1)$-tuple $\langle \alpha_1, \alpha_2, \alpha_3, \ldots , \alpha_{n-1} \rangle$ with $\alpha_j \in \Z_{\geq 0}$ will be in at least one of the sets $\A, \E_k$, $2 \leq k \leq n-1$.

\begin{thm} 
\label{thm:sympemb}
If $B_n \hookrightarrow (X,\omega)$ is a symplectic embedding, where  $(X,\omega)$ is a symplectic $4$-manifold, such that:
\begin{itemize}
\item
$b_2^+(X) > 1$,
\item
$\left[c_1(X,\omega)\right] = -\left[\omega \right]$ as cohomology classes,
\item
$n \geq c_1^2(X,\omega) + 2$ and
\item
$\mathcal{B}as_X = \left\{\pm c_1(X,\omega)\right\}$, ($\mathcal{B}as_X$ denotes the set of Seiberg-Witten basic classes of X,)
\end{itemize}
then it cannot be of type $\A$ or of type $\E_k$, $k \geq c_1^2(X,\omega) + 2$.
\end{thm}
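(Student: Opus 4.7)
The plan is to follow the four-step outline advertised in the introduction and reach a contradiction from the hypothesis that a symplectic embedding $B_n \hookrightarrow (X,\omega)$ of type $\A$ or of type $\E_k$ with $k\ge c_1^2(X,\omega)+2$ exists. Step~1 is immediate from Proposition~\ref{p:sympemb}: after symplectic rational blow-up, $(X',\omega')$ contains the plumbing $C_n$ of symplectic spheres $S_1,\dots,S_{n-1}$ together with an embedded symplectic $(-1)$-sphere $\Sigma_{-1}$ meeting each $S_j$ positively and transversally in $\alpha_j$ points, where $(\alpha_1,\dots,\alpha_{n-1})$ is the tuple defining the type.

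For Step~2 I would exploit the fact that $H_2(B_n;\Q)=0$: Mayer--Vietoris decomposes $H_2(X';\Q)\cong H_2(C_n;\Q)\oplus H_2(X_0;\Q)$ and identifies $H_2(X;\Q)$ with the $C_n$-orthogonal summand. Since the intersection matrix $Q$ of $C_n$ has determinant $\pm n^2$, the system $Q\mathbf{t}=\boldsymbol\alpha$ has a unique rational solution $\mathbf{t}=(t_1,\dots,t_{n-1})$, and
\[
\gamma \;=\; [\Sigma_{-1}] - \sum_{j=1}^{n-1} t_j[S_j] \;\in\; H_2(X';\Q)
\]
pairs trivially with every $[S_j]$, hence corresponds to a class $\gamma\in H_2(X;\Q)$. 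Adjunction applied to the symplectic spheres gives $c_1(X',\omega')\cdot[S_1]=-n$, $c_1(X',\omega')\cdot[S_j]=0$ for $2\le j\le n-1$, and $c_1(X',\omega')\cdot[\Sigma_{-1}]=1$. Since $c_1(X',\omega')$ restricts to $c_1(X,\omega)$ on $X_0$, one obtains the clean formula $c_1(X,\omega)\cdot\gamma=1+n\,t_1$, whose sign is dictated by $(\alpha_1,\dots,\alpha_{n-1})$; solving $Q\mathbf{t}=\boldsymbol\alpha$ using the continued fraction $[-n-2,-2,\dots,-2]$ lets me read $t_1$ off for each type.

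For Step~3, assuming $c_1(X,\omega)\cdot\gamma>0$, I would realize $\gamma$ by a surface in $X$ built from $\Sigma_{-1}$ by excising small disks at its intersections with $C_n$ and capping off the resulting boundary curves inside the $B_n$ region using visible surfaces in the almost-toric base of Figure~\ref{f:toricbn} modeled on the Lagrangian core $\L_n$. The symplectic area of such capping pieces can be bounded using Proposition~\ref{p:vissurfarea}, and combined with the positivity of $\int_{\Sigma_{-1}}\omega'$ and each $\int_{S_j}\omega'$ this should force $\omega\cdot\gamma>0$. Since the hypothesis $[c_1(X,\omega)]=-[\omega]$ forces $c_1(X,\omega)\cdot\gamma=-\omega\cdot\gamma$, having both positive is a direct contradiction.

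For Step~4, when $c_1(X,\omega)\cdot\gamma\le 0$, I would use the hypothesis $\mathcal{B}as_X=\{\pm c_1(X,\omega)\}$. The surface representative of $\gamma$ constructed above has computable genus and self-intersection: when it is embedded, the generalized adjunction inequality (Theorem~\ref{thm:genadjfor}) reads $2g(\gamma)-2\ge \gamma^2+|c_1(X,\omega)\cdot\gamma|$, which the type hypotheses should make fail; when $\gamma$ is represented only by an immersed sphere with positive double points, the refined Theorem~\ref{thm:genadjimm} instead forces $c_1(X,\omega)\pm 2\gamma$ to be a Seiberg--Witten basic class distinct from $\pm c_1(X,\omega)$, again a contradiction. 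The principal obstacle I expect is the controlled construction of this surface representative of $\gamma$ inside $X$: capping the boundary curves of $\Sigma_{-1}\setminus\nu(C_n)$, which are explicit multi-meridians on $L(n^2,n-1)$, with surfaces in $B_n$ whose genus and double-point count can be estimated, separately for the subcases (1)--(3) of Definition~\ref{d:bntypeA} and for each $\E_k$ with $k\ge c_1^2(X,\omega)+2$. This is where the almost-toric visible surface technology of Section~\ref{sec:toric}, together with the description of $\L_n$ from Section~\ref{sec:bn}, is essential.
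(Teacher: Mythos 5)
Your Step 1 matches the paper, and your Step 2 is essentially the paper's computation in different packaging: your class $[\Sigma_{-1}]-\sum t_j[S_j]$ is $\frac{1}{n}$ times the paper's $\gamma=nD+e^2$, and your formula $1+nt_1$ agrees (up to that scale) with the paper's $c_1(X,\omega)\cdot\gamma=n-I_{n-1}-2I_{n-2}-\cdots-(n-1)I_1$. The gap is in Steps 3 and 4. For Step 3, the paper does \emph{not} attempt to cap off $\Sigma_{-1}\setminus\nu(C_n)$ for an arbitrary tuple with $c_1(X,\omega)\cdot\gamma>0$; note that tuples such as $I_{n-1}=2$ or $I_{n-1}=I_{n-2}=1$ satisfy that inequality, and there the boundary of $\Sigma_{-1}\cap X_0$ is a multi-component link in $L(n^2,n-1)$ for which your visible-surface capping has no evident control. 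The paper first proves Lemma~\ref{l:int12} --- using the Friedman--Morgan reflection automorphisms $R_\alpha$ to show that an embedded $(-1)$-sphere and an embedded $(-2)$-sphere in a manifold with $b_2^+>1$ meet at most once, else one manufactures infinitely many basic classes --- and applies it to the smoothed union $S_2^{n-1}$ to get $I_2+\cdots+I_{n-1}\le 1$ (Corollary~\ref{cor:I2In1}). Only after this reduction is the case $c_1(X,\omega)\cdot\gamma>0$ forced into ``exactly one $I_j=1$'' (Corollary~\ref{cor:c1pos}), and only in that case is the visible-surface argument of Proposition~\ref{p:omegapos} carried out. This lemma is the missing idea without which your Step 3 does not close.

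Your Step 4 is also too coarse. The class $\gamma$ is only a rational homology class; it is not represented by a sphere or by an embedded surface of computable genus, so neither Theorem~\ref{thm:genadjfor} nor Theorem~\ref{thm:genadjimm} can be applied to it directly, and ``$c_1(X,\omega)\pm 2\gamma$ is a basic class'' is not a conclusion those theorems give for such a class. What the paper actually does is: (i) bound $I_1\le n$ via adjunction applied to $S_1$ with the basic class $c_1(X',\omega')+2[\Sigma_{-1}]$ (Lemma~\ref{l:I1}); (ii) for the configurations $I_1=n$ and $I_1=I_{n-1}=1$, show the relative class $-n\delta$ is supported in the interior of $C_n$, so that $\pm(c_1(X',\omega')+2[\Sigma_{-1}])$ descends under the rational blow-down and forces four basic classes on $X$ (Lemma~\ref{l:I1n}); (iii) for the remaining mixed configurations, build the chain of smoothed $(-1)$-spheres in (\ref{eq:1spheres}) to produce a basic class $K'$ whose pairing with $[S_1]$ violates adjunction and simple type (Lemma~\ref{l:Ij1}); and (iv) for type $\E_k$ with $k\ge c_1^2(X,\omega)+2$, run the iterated blow-down of Proposition~\ref{p:bnembedek}, which needs Lemma~\ref{l:crn} at every stage and uses the hypothesis on $k$ precisely to keep $c_1^2\le -1$ throughout so that Taubes' theorem keeps supplying new $(-1)$-spheres. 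None of this is recoverable from a single adjunction inequality applied to $\gamma$, so as written your Step 4 would fail.
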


\begin{rmk}
The condition $\left[c_1(X,\omega)\right] = -\left[\omega \right]$ holds for surfaces of general type $X$, with the canonical class $K_X$ ample. The ampleness implies that for all curves $C$ in $X$, we have that $c_1(X,\omega) \cdot [C] < 0$, implying that there are no $(-1)$ or $(-2)$ curves in $X$. For a symplectic $4$-manifold $X$, the condition $\left[c_1(X,\omega)\right] = -\left[\omega \right]$ implies that there are no symplectic spheres $S$ with self-intersection $(-1)$ or $(-2)$: since for a symplectic sphere $S$, we have $\int_S \omega > 0$ which implies $c_1(X) \cdot [S] < 0 \Rightarrow [S]^2 < -2$ by the adjunction inequality. Additionally, the condition of $(X,\omega)$ having only one Seiberg-Witten basic class (up to sign), is also true of all surfaces of general type. Consequently, these symplectic $4$-manifolds are meant to mimic surfaces of general type as much as they can.
\end{rmk}

We will prove this theorem in four steps. In \textit{\textbf{Step 1}}, section~\ref{sec:step1}, we will prove Proposition~\ref{p:sympemb}. In \textit{\textbf{Step 2}}, section~\ref{sec:step2}, using the existence of the sphere $\Sigma_{-1}$ from Proposition~\ref{p:sympemb}, we construct a specific homology cycle $\gamma$, and compute $c_1(X,\omega) \cdot \gamma$ in terms of the intersection pattern of $\Sigma_{-1}$ with the spheres of the $C_n$ configuration. In \textit{\textbf{Step 3}}, section~\ref{sec:step3}, we show that if $c_1(X,\omega) \cdot \gamma > 0$, then $\omega \cdot \gamma > 0$, thus contradicting the $\left[c_1(X,\omega)\right] = -\left[\omega \right]$ assumption on $(X,\omega)$. As a result, we will show show that $B_n \hookrightarrow (X,\omega)$ cannot be of type $\A_1 \subset \A$, where $\A_1$ is the set of $(n-1)$-tuples corresponding to $c_1(X,\omega) \cdot \gamma > 0$. In \textit{\textbf{Step 4}}, section~\ref{sec:step4}, we show that if $c_1(X,\omega) \cdot \gamma \leq 0$, then this violates certain adjunction inequalities or forces $X$ to have additional Seiberg-Witten basic classes, thus preventing $B_n \hookrightarrow (X,\omega)$ to be of type $(\A - \A_1)$ and $\E_k$, $k \geq c_1^2(X,\omega) + 2$.

In section~\ref{sec:e2}, we give explicit examples of symplectic embeddings of $B_n \hookrightarrow (X,\omega)$ of type $\E_2$ for $n$ odd. In these examples, we always have $n < 3 + \frac{4}{3}c_1^2(X,\omega)$.

\subsection{Step 1}
\label{sec:step1}
As a first step in proving Theorem~\ref{thm:sympemb}, we will prove Proposition~\ref{p:sympemb}, that is, we will show that there exists a sphere, $\Sigma_{-1}$ of self-intersection $(-1)$ which intersects the spheres of the $C_n$ configuration, positively and transversally, in the rational blow-up of $X$. 

We begin by assuming that for a given symplectic 4-manifold $(X,\omega)$, with conditions as stated in the Proposition~\ref{p:sympemb}, there is a symplectic embedding $B_n \hookrightarrow (X,\omega)$. This embedding is in the sense of the symplectic rational blow-up theorem (Theorem 3.2 in \cite{Kh2}), meaning there is a Lagrangian core $\L_n$ in $(X,\omega)$, whose neighborhood is the rational homology ball $B_n$. It follows, according to this theorem that we can perform the symplectic rational blow-up procedure, replacing $B_n$ with $C_n$, and obtain a new symplectic manifold $(X',\omega')$ which contains a symplectic copy of a $C_n$ configuration of symplectic spheres. Since we assumed that $n \geq c_1^2(X,\omega) + 2$, and since $c_1^2(X',\omega') = c_1^2(X,\omega) - (n-1)$, we have $c_1^2(X',\omega') \leq -1$. As a consequence of Corollary~\ref{cor:taubes}, for a generic compatible almost-complex structure $J_{\epsilon}$ on $X'$, there exists a $J_{\epsilon}$-holomorphic sphere, $\Sigma_{\epsilon}^{-1}$ with self-intersection number $(-1)$. 

In order to force only positive intersections between the spheres of the $C_n$ configuration and a sphere of self-intersection $(-1)$, $\Sigma_{-1}$ (derived from $\Sigma^{-1}_{\epsilon}$ as a consequence of Proposition~\ref{p:grcomp1}), we need to make the spheres of the $C_n$ configuration pseudo-holomorphic:

\begin{lemma}
\label{l:jholocn}
With $X'$ as above, there exists an $\omega$-compatible almost-complex structure $J$ on $X'$ such that all of the spheres in the $C_n$ configuration are $J$-holomorphic.
\end{lemma}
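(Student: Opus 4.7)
The plan is to construct $J$ in stages, exploiting the fact that the $C_n$ configuration produced by Symington's symplectic rational blow-up (like any symplectic plumbing) can be arranged so that the spheres $S_j$ meet transversally and $\omega'$-orthogonally at each plumbing point. This $\omega'$-orthogonality is the key property that allows a single compatible $J$ to preserve the tangent planes of both spheres at an intersection simultaneously.

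First, I would specify $J$ locally near each intersection point $p = S_j \cap S_{j+1}$. The symplectic neighborhood theorem, combined with the $\omega'$-orthogonality of $T_p S_j$ and $T_p S_{j+1}$, yields a Darboux chart in which the two spheres appear as the standard symplectic coordinate $2$-planes in $(\mathbb{C}^2, \omega_0)$. On this chart the standard complex structure is $\omega_0$-compatible and preserves both tangent planes, furnishing a local model for $J$ in a neighborhood of each node.

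Second, I would extend $J$ over each sphere $S_j$ itself. Away from its intersections with its neighbors in the chain, the only constraint is that $T S_j$ be a $J$-invariant subbundle and that $J|_{TS_j}$ be the unique $\omega'|_{S_j}$-compatible complex structure agreeing with the orientation of $S_j$. Fiberwise, the space of $\omega'$-compatible complex structures on $T_xX'$ that preserve the symplectic $2$-plane $T_x S_j$ is non-empty and contractible (it is homotopy equivalent to the space of compatible complex structures on the normal line of $S_j$), so one can interpolate between the local models fixed near the nodes to obtain an $\omega'$-compatible $J$ defined on a closed tubular neighborhood $U$ of $\bigcup_j S_j$ with the property that every $S_j$ is $J$-holomorphic.

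Finally, I would extend $J$ from $U$ to all of $X'$ by the standard soft argument: the space of $\omega'$-compatible almost-complex structures on any symplectic vector bundle is non-empty and contractible (see McDuff--Salamon), so a partition-of-unity convex-combination construction glues $J|_U$ to a globally defined $\omega'$-compatible $J$ on $X'$ without disturbing $J$ on $U$. The main obstacle is really only the first step: for two arbitrary transverse symplectic surfaces one cannot in general find a compatible $J$ preserving both tangent planes at the crossing, but the $\omega'$-orthogonality built into the symplectic plumbing $C_n$ is exactly what guarantees this, after which everything that follows is routine extension.
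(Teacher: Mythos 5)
Your proposal is correct and follows essentially the same route as the paper: reduce to $\omega$-orthogonal intersections, use the linear local model at each node where the standard complex structure preserves both coordinate planes, and extend by the standard contractibility argument. The only cosmetic difference is that you treat the orthogonality as built into the plumbing, whereas the paper obtains it by isotoping the transverse positive intersections to orthogonal ones (citing McDuff--Polterovich) before invoking the local lemma.
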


\begin{proof}
First, we label the spheres of $C_n$ with $S_1, S_2, S_3, \ldots , S_{n-1}$, as before in Figure~\ref{f:cn}. Let the points $a_i = S_i \cap S_{i+1}$ be the points in the intersection of the spheres of $C_n$. Let $N_{a_i}$ be small Darboux neighborhoods around those points, such that 
\begin{equation}
E = \bigcup_{i=1}^{n-1} S_i - \bigcup_{i=1}^{n-2}(N_{a_i} \cap (S_i \cup S_{i+1}))
\end{equation}

\noindent is a symplectic submanifold consisting of $(n-1)$ connected components. Then, we can choose an $\omega$-compatible almost-complex structure $J$ on $X'$ such that all the connected components of the submanifold $E$ are $J$-holomor\-phic submanifolds. 

We can extend this almost-complex structure $J$ across the neighborhoods of the intersection points $N_{a_i}$ as follows: First, the results of \cite{McP} imply that for the symplectic spheres in $C_n$ configuration, which intersect transversally and positively, can always be isotoped in such a way that they intersect orthogonally (with respect to the symplectic structure) while remaining symplectic. Second, we use the following technical local result, which is a version of McDuff's result in \cite{McD1}:

\begin{lemma}
\label{l:localortho}
Let $\pi_1$ and $\pi_2$ be two orthogonal planes through $\left\{0\right\}$ in $\R^4$ which intersect with positive orientation and are symplectic with respect to the standard linear symplectic form $\omega_0$. Then there is a linear $\omega_0$-compatible $J$ which preserves these planes.
\end{lemma}

\begin{proof}
We can choose a basis $(e_1,e_2)$ for $\pi_1 \subset \R^4$ and a basis $(e_3,e_4)$ for $\pi_2 \subset \R^4$, such that $\omega_0(e_1,e_2)=1$ and $\omega_0(e_3,e_4)=1$ and $\pi_1^{\bot} = \pi_2$ (with respect to $\omega_0$). Then we simply choose $J$ to be such that $J(e_1)=e_2$ and $J(e_3)=(e_4)$. \end{proof}

\noindent Since after (possibly) isotoping the symplectic spheres of $C_n$, the intersections of the spheres are orthogonal, in a local Darboux neighborhood, $N_{a_i}$, they can be modeled by two orthogonal planes through $\left\{0\right\}$ in $\R^4$. Therefore, Lemma~\ref{l:localortho} implies that we can choose an $\omega$-compatible almost-complex structure $J$ on $X'$ such that the symplectic spheres of $C_n$ are also $J$-holomorphic spheres. \end{proof}

\begin{rmk}
McDuff's result \cite{McD1}, says that if the planes $\pi_1$ and $\pi_2$ intersect positively and transversally then there exists an $\omega$-tame almost-complex structure $J$ preserving the planes. This is not enough for our purposes, since in the next step, using Gromov compactness we will consider a sequence of almost-complex structures from $J_{\epsilon} \rightarrow J$, and since $J_{\epsilon}$ is required to be $\omega$-compatible by Taubes' theorem, we need $J$ to be $\omega$-compatible as well.
\end{rmk}

\begin{prop}
\label{p:jholosphere}
Let $X'$ be the rational blow-up of $X$, as above, then there exists a $J$-holomorphic sphere of self-intersection $(-1)$ in $X'$, $\Sigma_{-1}$, with $J$ the almost-complex structure from Lemma~\ref{l:jholocn}.
\end{prop}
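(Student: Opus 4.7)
The plan is to obtain $\Sigma_{-1}$ as a Gromov limit of embedded $J_\epsilon$-holomorphic $(-1)$-spheres, where $\{J_\epsilon\}$ is a sequence of generic $\omega'$-compatible almost-complex structures converging in $C^\infty$ to the specific (non-generic) $J$ from Lemma~\ref{l:jholocn}, all spheres lying in one fixed homology class $e \in H_2(X';\Z)$.

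First, I would pin down the class $e$. By Corollary~\ref{cor:taubes} applied to $(X',\omega')$ (which satisfies $b_2^+(X') = b_2^+(X) > 1$ and $c_1^2(X',\omega') \leq -1$), there is an embedded symplectic sphere $\Sigma \subset X'$ with $[\Sigma]^2 = -1$; set $e := [\Sigma]$, so $c_1(X',\omega') \cdot e = 1$ by the adjunction equality. Following the proof of Corollary~\ref{cor:taubes}, the class $K := c_1(X',\omega') + 2PD(e)$ lies in $\mathcal{B}as_{X'}$. For every \emph{generic} $\omega'$-compatible almost-complex structure $\tilde J$, Theorem~\ref{thm:Tmain} applied to $K$ produces a $\tilde J$-holomorphic embedded submanifold in class $e$; using the identities $e^2=-1$ and $c_1(X',\omega')\cdot e = 1$, the adjunction formula on each connected (embedded) component, and the vanishing of pairwise intersection numbers for disjoint components, one extracts a connected embedded $\tilde J$-holomorphic sphere of self-intersection $(-1)$ in class $e$.

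Now I would approximate the specific $J$ of Lemma~\ref{l:jholocn} by a sequence of generic $J_\epsilon \to J$, and for each $\epsilon$ let $\Sigma_\epsilon$ denote the embedded $J_\epsilon$-holomorphic $(-1)$-sphere in class $e$ produced above. Since $\omega' \cdot e$ is independent of $\epsilon$, Gromov's compactness theorem yields, after passing to a subsequence, convergence of $\{\Sigma_\epsilon\}$ to a $J$-holomorphic cusp curve $\mathcal{C} = \sum_i n_i C_i$ in which the $C_i$ are simple (possibly immersed) $J$-holomorphic spheres, $n_i \geq 1$, and $\sum_i n_i [C_i] = e$. The closing step is to exhibit among the components of $\mathcal{C}$ an embedded $J$-holomorphic sphere of self-intersection $(-1)$, which will be the desired $\Sigma_{-1}$. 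The available tools are: the adjunction identity $c_1(X',\omega') \cdot [C_i] = [C_i]^2 + 2 + 2\delta_i$ for each simple sphere component ($\delta_i \geq 0$ counting singularities), positivity of intersections $[C_i]\cdot[C_j] \geq 0$ for $i\neq j$, symplectic positivity $\omega'\cdot[C_i] > 0$, and the two numerical identities $\sum_i n_i\,(c_1(X',\omega')\cdot[C_i]) = 1$ and $\sum_i n_i^2 [C_i]^2 + 2 \sum_{i<j} n_i n_j\,[C_i]\cdot[C_j] = -1$.

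The principal difficulty is this last extraction step. Note that $e^2 = -1$ forces $e$ to be primitive, so in the single-component case $\mathcal{C} = n_1 C_1$ one must have $n_1 = 1$, $[C_1]=e$, and then adjunction gives $\delta_1 = 0$: precisely an embedded $(-1)$-sphere. For multi-component bubble trees the numerical constraints above in principle admit decompositions with components of various self-intersections and positive arithmetic genus; the expected strategy is to isolate the component $C_{i_0}$ maximizing $c_1(X',\omega')\cdot[C_{i_0}]$ (which is $\geq 1$ since the weighted total is $1$ and every $n_i \geq 1$) and use positivity of intersections of $C_{i_0}$ with the residual effective class $e - n_{i_0}[C_{i_0}]$, combined with the adjunction bound, to force $c_1\cdot[C_{i_0}] = 1$, $\delta_{i_0}=0$, and $[C_{i_0}]^2 = -1$. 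This is the standard McDuff-type argument underlying the fact that the class of any symplectic $(-1)$-sphere is represented by an embedded $J$-holomorphic sphere for every $\omega$-compatible $J$, and setting $\Sigma_{-1} := C_{i_0}$ completes the proof.
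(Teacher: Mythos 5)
Your overall architecture is the same as the paper's: Taubes (Corollary~\ref{cor:taubes}) gives embedded $J_\epsilon$-holomorphic $(-1)$-spheres for generic $J_\epsilon$, you let $J_\epsilon \to J$, invoke Gromov compactness, and try to extract an embedded $(-1)$-sphere from the limiting cusp curve. Your preliminary step of pinning down a single class $e$ for all $\epsilon$ (so that the energy $\omega'\cdot e$ is uniformly bounded) is a reasonable tightening of a point the paper leaves implicit. The gap is in the extraction step, which is exactly where the paper does its real work (Proposition~\ref{p:grcomp1}). The tools you allow yourself --- the adjunction identity for simple sphere components (which, incidentally, should read $c_1(X')\cdot[C_i] \leq [C_i]^2 + 2 - 2\delta_i$, not $+2\delta_i$), positivity of intersections, $\omega'$-positivity, and the two numerical identities --- do not suffice. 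Concretely, take $N=2$, $n_1=n_2=1$, with $C_1$ an embedded sphere of square $0$ (so $c_1\cdot[C_1]=2$), $C_2$ an embedded sphere of square $-3$ (so $c_1\cdot[C_2]=-1$), and $[C_1]\cdot[C_2]=1$: then $e^2=-1$, $c_1\cdot e=1$, all pairwise intersections are nonnegative, the component maximizing the $c_1$-degree has degree $2$ and square $0$, no component is a $(-1)$-sphere, and pairing $C_1$ with the residual class $e-[C_1]$ gives $1\geq 0$, which contradicts nothing. So the ``standard McDuff-type'' closing step, as you describe it, does not close; indeed the underlying ``fact'' you cite (every exceptional class is embeddedly represented for \emph{every} compatible $J$) is not true in the generality you need.

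What kills such configurations is Seiberg--Witten input, and this is the content of the paper's argument. By Lemma~\ref{l:ss} each simple component $v^i$ is homologous to an embedded \emph{symplectic} surface, and the generalized adjunction formula (Theorem~\ref{thm:genadjfor}), applied with the basic class $\pm c_1(X')$ (here $b_2^+(X')>1$ is essential), forces $c_1(X')\cdot[v^i]\leq 0$ whenever $[v^i]^2\geq 0$, and similarly constrains the components of negative square. The paper then runs an induction on the number of components using only the relation $\sum_i m_i\, c_1(X')\cdot[v^i] = m \geq 1$: at each stage either the distinguished component is an embedded $(-1)$-sphere, or its $c_1$-degree is nonpositive and the remaining components still carry total weighted $c_1$-degree at least $1$; the terminal case where every component has square $\leq -3$ is impossible because then every $c_1$-degree is $\leq -1$. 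You need to supply this (or an equivalent SW-based) argument; without it the proposition is not proved.
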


\begin{proof}
To show the existence of this $J$-holomorphic sphere, $\Sigma_{-1}$ we will use Gromov compactness to find a sequence of almost-complex structures, under which the $J_{\epsilon}$-holomorphic sphere $\Sigma_{\epsilon}^{-1}$ will converge to a multicurve, (or a cusp-curve) with (potentially) some ``bubbles". One of the components of the multicurve will be a $J$-holomorphic sphere of self-intersection $(-1)$, $\Sigma_{-1}$. First, we state the definition and properties of a multicurve, convergence of almost-complex structures and Gromov compactness. Let $(M,\omega)$ be a compact symplectic manifold:

\begin{defn}
\cite{McS2} A \textit{multicurve} (or \textit{cusp-curve}) $C$ is a connected union
\begin{equation}
C=C^1 \cup C^2 \cup \cdots \cup C^N
\end{equation}

\noindent of $J$-holomorphic spheres $C^j$, which are called components. Each component is parameterized by a smooth nonconstant $J$-holomorphic map $u^j: \C P^1 \rightarrow M$, which is not required to be simple. The multicurve is denoted by $u = (u^1, \ldots , u^N)$.
\end{defn}

\begin{defn}
\cite{McS2} A sequence of $J$-holomorphic curves $u_{\nu}:\C P^1$ is said to \textbf{converge weakly} to a multicurve $u = (u^1, \ldots , u^N)$ if the following holds:

\begin{enumerate}
\item
For every $j \le N$, there exists a sequence $\phi^j_{\nu}: \C P^1 \rightarrow \C P^1$ of fractional linear transformations and a finite set $X^j \subset \C P^1$ such that $u_{\nu} \circ \phi^j_{\nu}$ converges to $u^j$ uniformly with all derivates on compact subsets of $\C P^1 - X^j$.

\item
There exists a sequence of orientation preserving (but not holomorphic) diffeomorphisms $f_{\nu}: \C P^1 \rightarrow \C P^1$ such that $u_{\nu} \circ f_{\nu}$ converges in the $C^0$-topology to a parametrization $v:\C P^1 \rightarrow M$ of the multicurve $u = (u^1, \ldots , u^N)$.
\end{enumerate}
\end{defn}

It follows \cite{McS2}, that for $\nu$ sufficiently large, that the map $u_{\nu}: \C P^1 \rightarrow M$ is homotopic to:
\begin{equation}
u^1 \# u^2 \# \cdots \# u^N : \C P^1 \rightarrow M \, .
\end{equation}

\noindent In particular if $A_{\nu},A^j \in H_2(M,\Z)$ are the homology classes of $u_{\nu}$ and $u^j$ respectively, then we have:
\begin{equation}
c_1(M) \cdot A_{\nu} = \sum_{j=1}^N c_1(M) \cdot A^j \, .
\end{equation}

Finally, we can state Gromov's compactness theorem \cite{Gr}, as it appears in \cite{McS2}:

\begin{thm}
\label{thm:gromov}
\textbf{(Gromov's compactness)} Assume $M$ is compact, and let $J_{\nu} \in \J_{\tau}(M,\omega)$ be a sequence of $\omega$-tame almost complex structures which converge to $J$ in the $C^{\oo}$-topology. Then any sequence $u_{\nu}: \C P^1 \rightarrow M$ of $J_{\nu}$-holomorphic spheres with $\text{sup}_{\nu} E(u_{\nu}) < \oo$ has a subsequence which converges weakly to a (possible reducible) $J$-holomorphic multicurve $u = (u^1, \ldots , u^N)$.
\end{thm}

Additionally, specifically for symplectic manifolds of dimension $4$, we have the following adjunction formula:

\begin{thm}
\label{thm:adjformula}
Adjunction Formula (\cite{McS3} App. E). Let $(M,J)$ be an almost-complex 4-manifold, $(\Sigma, J)$ be a closed Riemann surface, not necessarily connected, and $u: \Sigma \rightarrow M$ be a simple $J$-holomorphic curve. Denote $A \in H_2(M;\Z)$ the homology represented by $u$. Then
\begin{equation}
2\delta(u) \leq A \cdot A - c_1(M) \cdot A + \chi(\Sigma)
\end{equation}

\noindent with equality if and only if $u$ is an immersion and all self-intersections are transverse.
\end{thm}

\noindent In the above, ``simple" means not multiply covered and $\delta(u)$ is the number of self-intersections of $u$:
\begin{equation}
\label{eq:deltau}
\delta(u) := \frac{1}{2} \# \left\{(z_0,z_1) \in \Sigma \times \Sigma | u(z_0)=u(z_1), z_0 \neq z_1 \right\}
\end{equation}

\noindent
Additionally, McDuff also proved the following corollary to the theorem above:

\begin{cor}
\label{cor:adjformula}
(\cite{McS3} App. E). Let $M$, $\Sigma$, $u$ and $A$ be as in Theorem~\ref{thm:adjformula}. Then 
\begin{equation}
\label{eq:jhadj}
A \cdot A - c_1(M) \cdot A + \chi(\Sigma) \geq 0
\end{equation}
\noindent with equality if and only if $u$ is an embedding.
\end{cor}

In \cite{McS3}, McDuff proves Theorem~\ref{thm:adjformula} by showing that in dimension $4$, a homology class $A \in H_2(M,\Z)$ which is represented by a simple $J$-holomorphic curve, $u: \Sigma \rightarrow M$, can always be represented by an immersed $J'$-holomorphic curve $v: \Sigma \rightarrow M$, with transverse self-intersections. The curves $u$ and $v$ are $C^1$-close, and the almost-complex structures $J$ and $J'$ are $C^1$-close as well. This is shown using a strong theorem of Micallef-White \cite{MW} which states that a singularity of a $J$-holomorphic curve is equivalent to a singularity of a holomorphic curve, up to a $C^1$-diffeomorphism. As a result, Li in \cite{Li} made the following observation:

\begin{lemma}
\label{l:ss}
If a homology class $A \in H_2(M;\Z)$, for a $4$-dimensional symplectic manifold $M$, is represented by a simple $J$-holomorphic curve $u: \Sigma \rightarrow M$ for some $\omega$-tamed almost-complex structure $J$, then $A$ is represented by an embedded symplectic surface.
\end{lemma}

In our case, the spheres of the $C_n$ configuration are $J$-holomorphic, where\-as the sphere with self-intersection $(-1)$, $\Sigma_{\epsilon}^{-1}$, is $J_{\epsilon}$-holomorphic. So by Gromov's compactness theorem, we can take a sequence of almost-complex structures $J_{\epsilon} \rightarrow J$, such that there will exist a subsequence under which the $J_{\epsilon}$-holomorphic sphere $\Sigma_{\epsilon}^{-1}$ will converge to some multicurve $u = (u^1, \ldots ,$ $u^N)$. Since the $u^i$'s can be multiply covered (multiplicity $m_i$), we will write $v^i$ for the underlying simple $J$-holomorphic curve, giving us $[u^i] = m_i[v^i]$ as homology classes in $H_2(X';\Z)$. Also, we have:
\begin{equation}
[\Sigma^{-1}_{\epsilon}] = m_1[v^1] + m_2[v^2] + \cdots + m_N[v^N]
\end{equation}

\noindent in $H_2(X';\Z)$. Next, in Proposition~\ref{p:grcomp1}, our goal is to show that one of the $v^i$'s is indeed an embedded $J$-holomorphic sphere of self-intersection $(-1)$ in $X'$.

\begin{prop}
\label{p:grcomp1}
Let $\Sigma_{\epsilon}^{-1}$ and $v^i$, $i \in \left\{1, \ldots, N \right\}$, as in the above paragraph. Then for at least one $i$, the simple $J$-holomorphic curve $v^i$ is an embedded sphere with self-intersection $(-1)$.
\end{prop}

\begin{proof}
If $N=1$, then $m_1=1$ and $c_1(X') \cdot [v^1] = 1$, applying the inequality (\ref{eq:jhadj}) for $v^1$, we have that $[v^1]^2 \geq -1$. If $[v^1]^2 = -1$, then by Corollary~\ref{cor:adjformula} it must be an embedding. If $[v^1]^2 = k \geq 0$, then by Lemma~\ref{l:ss}, there exists an embedded symplectic surface $v^1_S$, with $[u^1] = [v^1_S]$, for which we have $-\chi(v^1_S) = [v^1_S]^2 - c_1(X') \cdot [v^1_S] = k-1$. However, if this is the case then this violates the generalized adjunction formula, since we would then have $k-1 \geq k + |c_1(X') \cdot [v^1_S]|$, which cannot occur. 

We will prove this proposition for general $N$ with an inductive combinatorial argument using Corollary~\ref{cor:adjformula}, Lemma~\ref{l:ss}, the adjunction formula for embedded symplectic surfaces, as well as the generalized adjunction formula (Theorem~\ref{thm:genadjfor}). First, (although not strictly necessary for the proof), we will prove the proposition for $N=2$, and make a slightly stronger assumption for the initial inductive case, in order to go to the general inductive step in a less cumbersome manner. If $N=2$, then we have:
\begin{equation}
\label{eq:m1m2}
[\Sigma_{\epsilon}^{-1}] = m_1[v^1] + m_2[v^2] \,\,\,\,\,\,\,\,\,\,\,\,\,\,\, m_1 c_1(X') \cdot [v^1] + m_2 c_1(X') \cdot [v^2] = 1
\end{equation}

\textit{Case 1:} Assume $[v^1]^2 = 2k  \geq 0$, then by inequality (\ref{eq:jhadj}), we have: $c_1(X') \cdot [v^1] \leq 2 + 2k$, therefore:
\begin{eqnarray*}
\text{If} \, c_1(X') \cdot [v^1] = 2 + 2k &\Rightarrow& v^1 \,\,\text{must be embedded}   \\
\text{If} \, c_1(X') \cdot [v^1] = 2k &\Rightarrow& \exists v^1_S \,\,\text{s.t.}\,\, -\chi(v^1_S) = 0   \\
\text{If} \, c_1(X') \cdot [v^1] = 2k -2 &\Rightarrow& \exists v^1_S \,\,\text{s.t.}\,\, -\chi(v^1_S) = 2   \\
\,\,\,\,\,\,\,\,\vdots \,\,\,\,\,\,\,\, &\Rightarrow& \,\,\,\,\,\,\,\, \vdots \,\,\,\,\,\,\,\,   \\
\text{If} \, c_1(X') \cdot [v^1] = 2 &\Rightarrow& \exists v^1_S \,\,\text{s.t.}\,\, -\chi(v^1_S) = 2k-2  
\end{eqnarray*}

\noindent where $v^1_S$ is an embedded symplectic surface such that $[v^1]=[v^1_S]$. This forces $c_1(X') \cdot [v^1] \leq 0$, since if $ 2 \leq c_1(X') \cdot [v^1] \leq 2+2k$, then the embedded surface $v^1_S$ fails to satisfy the generalized adjunction formula (Theorem~\ref{thm:genadjfor}). Also, note that $c_1(X') \cdot [v^1]$ must be an even integer. Next, (\ref{eq:m1m2}) together with $c_1(X') \cdot [v^1] \leq 0$ imply that $c_1(X') \cdot [v^2] \geq 1$. If we apply (\ref{eq:jhadj}) to $v^2$, we get: $[v^2]^2 \geq -1$. Thus, if $[v^2]^2 = -1$, then $c_1(X') \cdot [v^2] = 1$ and by Corollary~\ref{cor:adjformula} $v^2$ is an embedding, if not then $[v^2]^2 = l \geq 0$ and by (\ref{eq:jhadj}) we get $1 \leq c_1(X') \cdot [v^2] \leq l+2$, so:
\begin{eqnarray*}
\text{If} \, c_1(X') \cdot [v^2] = l + 2 &\Rightarrow& \exists v^2_S \,\,\text{s.t.}\,\, -\chi(v^2_S)=-2   \\
\text{If} \, c_1(X') \cdot [v^2] = l &\Rightarrow& \exists v^2_S \,\,\text{s.t.}\,\, -\chi(v^2_S)=0   \\
\text{If} \, c_1(X') \cdot [v^2] = l - 2 &\Rightarrow& \exists v^2_S \,\,\text{s.t.}\,\, -\chi(v^2_S)=2  \\
\,\,\,\,\,\,\,\,\vdots \,\,\,\,\,\,\,\, &\Rightarrow& \,\,\,\,\,\,\,\, \vdots \,\,\,\,\,\,\,\,   \\
\text{If} \, c_1(X') \cdot [v^2] = 1 &\Rightarrow& \exists v^2_S \,\,\text{s.t.}\,\, -\chi(v^2_S)=l-1 (\text{if}\,l\,\text{is even})  
\end{eqnarray*}

\noindent where $v^2_S$ is an embedded symplectic surface such that $[v^2]=[v^2_S]$. Here, we must have $[v^2]^2 = -1$, since all the cases where $[v^2]^2 = l \geq 0$ and $1 \leq c_1(X') \cdot [v^2] \leq l+2$, cannot occur because applying the generalized adjunction formula (Theorem~\ref{thm:genadjfor}) would result in a contradiction. Consequently, if $[v^1]^2 = 2k \geq 0$, then we must have $[v^2]^2= -1$ and $v^2$ must be an embedded sphere. 

\textit{Case 2:} Assume $[v^1]^2 = 2k+1  \geq 0$. If we apply the inequality (\ref{eq:jhadj}) to $v^1$, then we have $c_1(X') \cdot [v^1] \leq 3+2k$. However, just as in \textit{Case 2}, if $1 \leq c_1(X') \cdot [v^1] \leq 3+2k$, then there would exist an embedded symplectic surface $v^1_S$, with $[v^1]=[v^1_S]$, such that applying the generalized adjunction formula (Theorem~\ref{thm:genadjfor}) for $v^1_S$ would result in a contradiction. Also, as before, we again observe that the integer $[v^1]^2 - c_1(X') \cdot [v^1]$ must be even, thus we have $c_1(X') \cdot [v^1] \leq -1$.

We proceed as before in \textit{Case 1}, and $c_1(X') \cdot [v^1] \leq -1$ together with equation (\ref{eq:m1m2}), imply that $1 \leq c_1(X') \cdot [v^2]$. Therefore, by the same steps as in \textit{Case 1}, if $[v^1]^2 = 2k+1$, then we must have $[v^2]^2 = -1$, and $v^2$ must be an embedded sphere.

We can switch the roles of $v^1$ and $v^2$, in the above cases, which implies that if $[v^2]^2 = k \geq 0$ then $[v^1]^2 = -1$ and $v^1$ must be an embedded sphere. Therefore, we are left with case:

\textit{Case 3:} Assume both $[v^1]^2 \leq -1$ and $[v^2]^2 \leq -1$. We can again apply inequalities (\ref{eq:jhadj}) to $v^1$ and $v^2$, multiplying the first by $m_1$ and the second one by $m_2$,  adding them together, and using (\ref{eq:m1m2}), we get:
\begin{equation*}
\label{eq:m1m2ineq}
1 - 2m_1 -2m_2 \leq m_1[v^1]^2+m_2[v^2]^2 \, ,
\end{equation*}

\noindent implying that both $[v^1]^2$ and $[v^2]^2$ can't be $\leq 2$. Therefore, we are left with a finite number of possibilities: Either $[v^1]^2 = -1$ and $[v^2]^2 = -k \leq -1$ (satisfying inequality (\ref{eq:m1m2ineq})) or the same with roles of $v^1$ and $v^2$ switched. In this case we have the following:
 
\[
\left.\begin{array}{l}
    [v^1]^2 = -1 \\ \left[v^2\right]^2 = -k \leq -1 
\end{array}\right\} 
\implies 
\left.\begin{array}{l}
    c_1(X') \cdot [v^1] \leq 1  \\ c_1(X') \cdot [v^2] \leq 2-k .
\end{array}\right.
\]

\noindent If $k=1$ then, (\ref{eq:m1m2}) implies that at least one of $c_1(X') \cdot [v^i]$ must be $1$, in turn implying that either $v^1$ or $v^2$ is an embedded sphere with self-intersection $(-1)$. If $k > 1$, then again because of $(\ref{eq:m1m2})$, we must have $c_1(X') \cdot [v^1]=1$ implying that $v^1$ is an embedded sphere with self-intersection $(-1)$. If roles of $v^1$ and $v^2$ are switched, with $[v^1]^2 = -k \leq -2$ and $[v^2]^2 = -1$, we would have $v^2$ be an embedded sphere with self-intersection $(-1)$.

This covers all the possibilities of the values for $[v^1]^2$ and $[v^2]^2$ with $N=2$, and in each case at least one of $v^1$ or $v^2$ is an embedded sphere with self-intersection $(-1)$. We observe that if we replace the heavily used equation (\ref{eq:m1m2}), by:
\begin{equation}
\label{eq:m1m2m}
m_1 c_1(X') \cdot [v^1] + m_2 c_1(X') \cdot [v^2] = m \geq 1
\end{equation}

\noindent then everything in the \textit{Cases 1-3} would proceed in the same way. In \textit{Case 1}, whenever we have $c_1(X') \cdot [v^1] \leq 0$, we can still use equation (\ref{eq:m1m2m}) to conclude that $c_1(X') \cdot [v^2] \geq 1$, and everything would proceed in the same way. In \textit{Case 2}, whenever we have $c_1(X') \cdot [v^1] \leq -1$, again we can still use equation (\ref{eq:m1m2m}) to conclude that $c_1(X') \cdot [v^2] \geq 1$. Likewise in \textit{Case 3}, $m \geq 1$ in (\ref{eq:m1m2m}) is all that is needed to reach the desired conclusion. 

Consequently, for a configuration of $J$-holomorphic curves $m_1[v^1]+m_2[v^2]$, with the condition (\ref{eq:m1m2m}), at least one of the curves $v^1$ and $v^2$ must be an embedded sphere with self-intersection $(-1)$. We make an induction assumption, that if we have a configuration of $J$-holomorphic curves $m_1[v^1]+m_2[v^2]+ \cdots + m_{N-1}[v^{N-1}]$, with the condition:
\begin{equation}
\label{eq:m1mn1m}
m_1 c_1(X') \cdot [v^1] + m_2 c_1(X') \cdot [v^2] + \cdots + m_{N-1} c_1(X') \cdot [v^{N-1}] = m \geq 1
\end{equation}

\noindent then one of the $v^i$s, $1 \leq i \leq N-1$, is an embedded sphere with self-intersection $(-1)$. We will show that if we have a configuration of $J$-holomorphic curves $m_1[v^1]+m_2[v^2]+ \cdots + m_{N}[v^{N}]$, with the condition:
\begin{equation}
\label{eq:m1mnm}
m_1 c_1(X') \cdot [v^1] + m_2 c_1(X') \cdot [v^2] + \cdots + m_{N} c_1(X') \cdot [v^{N}] = m' \geq 1
\end{equation}

\noindent then one of the $v^i$s, $1 \leq i \leq N$, is an embedded sphere with self-intersection $(-1)$.

\textit{Case 1':} Assume $[v^N]^2 =2k \geq 0$. Then by (\ref{eq:jhadj}), we have $c_1(X') \cdot [v^N] \leq 2k+2$. However, as in \textit{Case 1} from $N=2$, by Lemma~\ref{l:ss} the existence of a smooth symplectic surface $v^N_S$, with $[v^N]=[v^N_S]$, together with the generalized adjunction formula (Theorem~\ref{thm:genadjfor}) imply that in fact $c_1(X') \cdot [v^N] \leq 0$. Combining this with (\ref{eq:m1mnm}), we get:
\begin{equation}
\begin{split}
m' - m_1 c_1(X') \cdot [v^1] - \cdots - m_{N-1} c_1(X') \cdot [v^{N-1}] = m_N c_1(X') \cdot [v^N] \leq 0 \nonumber
\end{split}
\end{equation}
\vspace{-.2in}
\begin{equation}
\Rightarrow \,\,\,\, m_1 c_1(X') \cdot [v^1] + \cdots + m_{N-1} c_1(X') \cdot [v^{N-1}] \geq m' \geq 1 \nonumber 
\end{equation}

\noindent which according to the induction hypothesis implies that at least one $v^i$s, $1 \leq i \leq N-1$, is an embedded sphere with self-intersection $(-1)$.

\textit{Case 2':} Assume $[v^N]^2 =2k+1 \geq 1$. Again, by (\ref{eq:jhadj}), we have $c_1(X') \cdot [v^N] \leq 2k+3$. However, as in \textit{Case 2} from $N=2$, we have $c_1(X') \cdot [v^N] \leq -1$, and combining this with (\ref{eq:m1mnm}), we get:
\begin{equation}
\begin{split}
& m' - m_1 c_1(X') \cdot [v^1] - \cdots - m_{N-1} c_1(X') \cdot [v^{N-1}] \\
& = m_N c_1(X') \cdot [v^N] \leq -m_N \nonumber
\end{split}
\end{equation}
 
\begin{equation}
\Rightarrow \,\,\, m_1 c_1(X') \cdot [v^1] + \cdots + m_{N-1} c_1(X') \cdot [v^{N-1}] \geq m' + m_N \geq 1  \nonumber 
\end{equation}


\noindent which again according to the induction hypothesis implies that at least one $v^i$s, $1 \leq i \leq N-1$, is an embedded sphere with self-intersection $(-1)$.

\textit{Case 3':} Assume $[v_N]^2=-1$. Applying (\ref{eq:jhadj}) to $v^N$, we get $c_1(X') \cdot v_N \leq 1$. If $c_1(X') \cdot v_N = 1$, then $v_N$ is an embedded sphere. Otherwise, $c_1(X') \cdot v_N \leq -1$, and as in \textit{Case 2'}, we have:
\begin{equation}
m_1 c_1(X') \cdot [v^1] + \cdots + m_{N-1} c_1(X') \cdot [v^{N-1}] \geq m' + m_N \geq 1
\end{equation}

\noindent which by the induction hypothesis would imply that at least one of the $v^i$s, for $1 \leq i \leq N-1$, is an embedded sphere with self-intersection $(-1)$.

\textit{Case 4'} Assume $[v_N]^2=-2$. Again, applying (\ref{eq:jhadj}) to $v^N$, we get $c_1(X') \cdot v_N \leq 0$, meaning we have:
\begin{equation}
m_1 c_1(X') \cdot [v^1] + \cdots + m_{N-1} c_1(X') \cdot [v^{N-1}] \geq m' \geq 1
\end{equation}

\noindent which by the induction hypothesis again would imply that at least one of the $v^i$s, for $1 \leq i \leq N-1$, is an embedded sphere with self-intersection $(-1)$.

Applying \textit{Cases 1'-4'} to every $v^i$, $1 \leq i \leq N-1$, and applying the induction hypothesis each time, gives us that for all the instances where $[v^i]^2 \geq -2$, for any $1 \leq i \leq N$, we will have a $v^j$, for at least one $1 \leq j \leq N$, that is an embedded sphere with self-intersection $(-1)$. Therefore, the only remaining cases is when $[v^i]^2 = -k_i \leq -3$ for all $1 \leq i \leq N$. In which case, we would have $c_1(X') \cdot [v^i] \leq 2-k_i \leq -1$ for all $1 \leq i \leq N$, which would violate the assumption (\ref{eq:m1mnm}). This concludes the induction argument. As a result, when $m'=1$, this is the case of the Proposition~\ref{p:grcomp1}. \end{proof}

As a result of Propostion~\ref{p:grcomp1}, we now have a $J$-holomorphic embedded sphere of self-intersection $(-1)$ in $(X',\omega')$, which we will name $\Sigma_{-1}$, along with a $C_n$ configuration of $J$-holomorphic spheres. This proves Proposition~\ref{p:jholosphere} \end{proof}

An important feature of $J$-holomorphic curves, proven by McDuff \cite{McS3}, is that their intersections are always positive. In fact, we can always perturb a set of $J$-holomorphic curves and obtain embedded symplectic surfaces intersecting positively and transversally. Li-Usher in \cite{LiUs}, develop McDuff's techniques further, in order to perturb several $J$-holomorphic curves at once, and obtain the following result:

\begin{lemma}
\label{l:lius}
\cite{LiUs} Any set of distinct $J$-holomorphic curves $C_0, \ldots , C_m$ can be perturbed to symplectic surfaces $C_0', \ldots , C_m'$ whose intersections are all transverse and positive, with $C_i' \cap C_j' \cap C_k' = \varnothing$ when $i,j,k$ are all distinct. Furthermore, there is an almost-complex structure $J'$ arbitrarily $C^1$-close to $J$ such that the $C_i'$ are $J'$-holomorphic.
\end{lemma}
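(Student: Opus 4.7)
The plan is to reduce everything to a local problem at each singular or multiple-intersection point, solve it in a local holomorphic model, and then patch the local perturbations into a single global $J'$. First I would enumerate the finite set $P \subset X'$ of ``bad'' points: the self-intersection points of each $C_i$ (whether transverse or not), the points of $C_i \cap C_j$ for $i \neq j$ that are non-transverse, and the triple-or-higher intersections $C_i \cap C_j \cap C_k$ for distinct $i,j,k$. Finiteness follows from the fact that two distinct simple $J$-holomorphic curves in a $4$-manifold have isolated intersections and each $C_i$ has only finitely many singular points.

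At each $p \in P$, I would invoke the Micallef--White theorem to obtain a $C^1$-coordinate chart near $p$ in which every local branch of every $C_i$ through $p$ is an honest complex-analytic curve for the standard complex structure $J_0$ on $\C^2$. In this local holomorphic model the desired perturbation is classical algebraic geometry: separate coincident branches so all intersections become ordinary nodes, split any triple (or higher) intersection into distinct double points by an arbitrarily small translation of one branch, and replace any non-transverse double point by a number of transverse positive ones. Each such perturbation can be taken $C^1$-small and supported in a neighborhood of $p$ that avoids every other point of $P$, so the local perturbations have pairwise disjoint supports and define a global collection of smooth surfaces $C_0', \ldots , C_m'$ with only transverse positive double-point intersections and no triple points.

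Next I would promote these local perturbations to a perturbation of the almost-complex structure itself. Outside a neighborhood of $P$ the curves $C_i$ are already $J$-holomorphic embedded surfaces meeting transversally, so on that region we set $J' = J$. After the perturbation, at each common point $q \in C_i' \cap C_j'$ the tangent $2$-planes $T_q C_i'$ and $T_q C_j'$ are $\omega$-symplectic, transverse, and positively intersecting. Lemma~\ref{l:localortho} then produces an $\omega$-compatible linear complex structure preserving both planes at $q$; the space of such choices is convex, so a partition-of-unity argument over a neighborhood of $P$ yields a global $\omega$-compatible $J'$, $C^1$-close to $J$, for which every $C_i'$ is pointwise $J'$-invariant along its tangent bundle. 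A standard argument (e.g.\ using the implicit function theorem in the space of almost-complex structures, exactly as in McDuff's original single-curve proof) then perturbs the $C_i'$ once more, by a $C^1$-small amount, to be genuinely $J'$-holomorphic; since $J'$ tames $\omega$, these curves are automatically symplectic.

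The main obstacle is the simultaneous tangent-plane compatibility step at pairwise intersections, which is precisely why the condition $C_i' \cap C_j' \cap C_k' = \varnothing$ must be arranged before $J'$ is built: at a triple intersection one would need a single $\omega$-compatible $J_q$ preserving three distinct symplectic $2$-planes in $\R^4$, which generically does not exist. Eliminating triple points in advance is therefore not cosmetic but essential. A secondary subtlety is that the Micallef--White chart is only $C^1$, so one must check that the $J'$ produced inside the chart can be taken smooth on $X'$ while remaining $C^1$-close to $J$; this is where Li--Usher's refinement over McDuff's single-curve argument does the real work, and it is the step I expect to require the most care in turning this sketch into a complete proof.
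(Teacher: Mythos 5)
Your proposal follows essentially the same route as the paper, which does not prove this lemma but cites it to Li--Usher and summarizes the argument in one sentence: model a neighborhood of each intersection point or singularity in holomorphic coordinates (via Micallef--White) and slightly perturb each branch there. Your expanded sketch, including the elimination of triple points before constructing $J'$ and the use of a local linear model as in Lemma~\ref{l:localortho}, is consistent with that strategy and with the actual Li--Usher proof.
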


\noindent This is shown by modeling a neighborhood around each intersection point or singularity with holomorphic coordinates, and then slightly perturbing each branch.

Proposition~\ref{p:sympemb} is now a direct consequence of Lemma~\ref{l:lius}, Lemma~\ref{l:jholocn} and Proposition~\ref{p:jholosphere}.


\subsection{Step 2}
\label{sec:step2}
In this next step of our proof of Theorem~\ref{thm:sympemb}, we use $\Sigma_{-1}$ to construct a homology class $\gamma$ and compute $c_1(X) \cdot \gamma$ in terms of the intersection numbers of $\Sigma_{-1}$ with the spheres of the $C_n$ configuration.

We begin by rationally blowing down the $C_n$ configuration in $(X',\omega')$ symplectically. We can do so by the definition of the symplectic rational blow-down of Symington in \cite{Sym3}. We choose a neighborhood $(N(C_n),\omega'|_{N(C_n)})$ of the spheres in $C_n$, such that $\partial(N(C_n)) \cap \Sigma_{-1} \cong S^1$, $N(C_n) \cap \Sigma_{-1} \cong D^2$ and $(X' \backslash N(C_n)) \cap \Sigma_{-1} \cong D^2$. We denote this rational blow-down of $(X',\omega')$ as $(\tilde{X}',\tilde{\omega}')$. We observe that the symplectic manifolds $(X,\omega)$ and $(\tilde{X}',\tilde{\omega}')$ differ only by the volume of the rational homology ball $B_n$. This is due to the non-uniqueness of the symplectic rational blow-up operation, in terms of the symplectic volume of the $B_n$s. This also implies that the symplectic rational blow-down and the symplectic rational blow-up are not strictly inverse operations. However, $(\tilde{X}',\tilde{\omega}')$ still has the properties that $(X,\omega)$ does: $[c_1(\tilde{X}',\tilde{\omega}')] = -\left[\tilde{\omega}' \right]$, $b_2^+(\tilde{X}') > 1$, $\mathcal{B}as_X = \left\{\pm c_1(\tilde{X}',\tilde{\omega}')\right\}$ and $n \geq c_1^2(\tilde{X}',\tilde{\omega}') + 2$. Therefore, for the remainder of the proof, we will abuse notation and write $(X,\omega)$ for $(\tilde{X}',\tilde{\omega}')$.

Back up in $X'$, we can split up rational homology classes as follows:
 
$$
\begin{array}{rclll}
H_2(X';\Q) & = & H_2(C_n;\Q) & \oplus & H_2(X' \backslash C_n;\Q) \\
\Sigma_{-1} & = & \,\,\,\,\,\,\,\,\,\,a & + & \,\,\,\,\,\,\,\,\,\,b \\
PD(c_1(X',\omega')) & = & \,\,\,\,\,\,\,\,\,\,c & + & \,\,\,\,\,\,\,\,\,\,d \\
\end{array}
$$

\noindent Since we have $c_1(X',\omega') \cdot [\Sigma_{-1}] = 1$, then we have $1=a \cdot c + b \cdot d$. 

Let $D$ be a $2$-disk defined by:
\begin{equation}
D = (X'\backslash N(C_n)) \cap \Sigma_{-1} \subset X.
\end{equation}

\noindent Observe that $D \subset X$, since by definition $X \cong (X' \backslash N(C_n)) \cup B_n$. Also, since $\partial D \subset \partial B_n$ and $H_1(B_n;\Z) \cong \Z/n\Z$, then $n \partial D \cong 0 \in  H_1(B_n;\Q)$. Back down in $X$, we can now define the class $\gamma \in H_2(X;\Q)$ by:
\begin{equation}
\gamma = nD + e^2
\end{equation}

\noindent where $e^2$ is just a $2$-cell in $B_n \subset X$ for which $\partial (e^2) = n \partial D$. Since, $c_1(X',\omega') \cdot [\Sigma_{-1}] = a \cdot c + b \cdot d$ and $H_2(B_n;\Q)$ is trivial, we have:
\begin{equation}
c_1(X,\omega) \cdot \gamma = n b \cdot d \, .
\end{equation}

Our goal is to compute $c_1(X,\omega) \cdot \gamma $ explicitly in terms of the intersections of the sphere $\Sigma_{-1}$ with the spheres of $C_n$. Next, in \textit{\textbf{Step 3}} we will show that whenever $c_1(X,\omega) \cdot \gamma > 0$ then we also have $\omega \cdot \gamma > 0$, thus contradicting the condition $[c_1(X,\omega)] = -\left[\omega \right]$. In \textit{\textbf{Step 4}} we will show that the intersection configurations of $\Sigma_{-1}$ with $C_n$ yielding $c_1(X,\omega) \cdot \gamma \leq 0$ will also produce a contradiction.

In order to compute $c_1(X,\omega) \cdot \gamma $, all we need to compute is $a \cdot c$, since $n b \cdot d = n(1 - a \cdot c)$, which is fairly standard. Recall, we denote the spheres of the $C_n$ configuration by $S_1,S_2,S_3, \ldots, S_{n-1}$, with $[S_1]^2 = -n-2$ and $[S_i]^2 = -2$ for $2 \leq i \leq n-1$. Thus, we may denote the basis of $H_2(C_n;\Q)$ by $[S_1], [S_2], [S_3], \ldots, [S_{n-1}]$. As a result ``$a$", the homology class of $\Sigma_{-1}$ lying in $H_2(C_n;\Q)$, may be expressed as:
\begin{equation}
a = a_1[S_1] + a_2[S_2] + a_3[S_3] + \cdots + a_{n-1}[S_{n-1}]
\end{equation}

\noindent where $a_i \in \Q$. Next, let $I_j$ be the intersection numbers of $[\Sigma_{-1}]$ and $ [S_j]$:
\begin{eqnarray*}
[\Sigma_{-1}] \cdot [S_1] & = & I_1   \\
\left[\Sigma_{-1}\right] \cdot [S_2] & = & I_2   \\
\left[\Sigma_{-1}\right] \cdot [S_3]& = & I_3   \\
\vdots & = & \vdots   \\
\left[\Sigma_{-1}\right] \cdot [S_{n-1}]& = & I_{n-1} \, .
\end{eqnarray*}

\noindent (Note, we have $\alpha_j = I_j$ (see Definition~\ref{d:bntype}), since the intersections of the sphere $\Sigma_{-1}$ with the spheres $S_j$ are positive and transverse.) In order to express the $a_i$ in terms of the intersection numbers $I_j$, we need to solve the following linear system:
\begin{eqnarray*}
(a_1[S_1] + a_2[S_2] + a_3[S_3] + \cdots + a_{n-1}[S_{n-1}]) \cdot [S_1] & = & I_1   \\
(a_1[S_1] + a_2[S_2] + a_3[S_3] + \cdots + a_{n-1}[S_{n-1}]) \cdot [S_2] & = & I_2  \\
(a_1[S_1] + a_2[S_2] + a_3[S_3] + \cdots + a_{n-1}[S_{n-1}]) \cdot [S_3] & = & I_3   \\
\vdots & = & \vdots   \\
(a_1[S_1] + a_2[S_2] + a_3[S_3] + \cdots + a_{n-1}[S_{n-1}]) \cdot [S_{n-1}]& = & I_{n-1} \, .
\end{eqnarray*}

Next, we can express ``$c$", the homology class of $PD(c_1(X',\omega'))$ lying in $H_2(C_n;\Q)$, in terms of the basis $[S_1],[S_2],[S_3],\ldots,[S_{n-1}]$:
\begin{equation}
c = c_1[S_1] + c_2[S_2] + c_3[S_3] + \cdots + c_{n-1}[S_{n-1}]
\end{equation}

\noindent where the $c_i \in \Q$. Since the $S_i$ are symplectic spheres, we have the following:
\begin{eqnarray*}
c_1(X') \cdot [S_1] & = & -n   \\
c_1(X') \cdot [S_2] & = & 0   \\
c_1(X') \cdot [S_3]& = & 0   \\
\vdots & = & \vdots   \\
c_1(X') \cdot [S_{n-1}]& = & 0 \, .
\end{eqnarray*}

As a result, the quantity $a \cdot c$ is the dot product of the following two vectors in $H_2(C_n;\Q)$:
\begin{equation}
[a_1, a_2, a_3, \ldots, a_{n-1}]
\end{equation}

\noindent and
\begin{equation}
[-n, 0, 0, \ldots, 0] \, .
\end{equation}

\noindent Consequently, we only have to compute $a_1$ in terms of the intersection numbers $I_j$, which corresponds to the first row of the inverse of the $H_2(C_n;\Z)$ intersection matrix, giving us:
\begin{equation}
a_1 = \frac{-n+1}{n^2}I_1 + \frac{-n+2}{n^2}I_2 + \cdots + \frac{-2}{n^2}I_{n-2} + \frac{-1}{n^2}I_{n-1} \, .
\end{equation}

\noindent Since $a \cdot c = a_1 \cdot n$ and $c_1(X,\omega) \cdot \gamma = n(1- a \cdot c)$, we finally get:
\begin{equation}
\label{eq:c1gamma}
c_1(X,\omega) \cdot \gamma = n - I_{n-1} -2I_{n-2} -3I_{n-3} - \cdots -(n-2)I_2 - (n-1)I_1 \, .
\end{equation}

Note, that since $\alpha_j = I_j$, then we have shown that if the symplectic embedding of $B_n \hookrightarrow X$ is of type $\langle \alpha_1, \alpha_2, \alpha_3, \ldots , \alpha_{n-1} \rangle$, then there is a class $\gamma$, such that $c_1(X) \cdot \gamma$ is given by (\ref{eq:c1gamma}).

\vspace{.2in}

\subsection{Step 3} 
\label{sec:step3}
In this step we will show that if $c_1(X,\omega) \cdot \gamma > 0$, then we must also have $\omega \cdot \gamma > 0$, thus violating the $[c_1(X,\omega)] = -[\omega]$ condition of $(X,\omega)$. This will eliminate the possibility of embeddings $B_n \hookrightarrow X$  of type $\A_1 \subset \A$, where $\A_1$ is the set of $(n-1)-$tuples $\langle \alpha_1, \alpha_2, \alpha_3, \ldots , \alpha_{n-1} \rangle$ satisfying the inequality (\ref{eq:c1g0}) (with $\alpha_j = I_j$).
 
If $c_1(X) \cdot \gamma > 0$, we have:
\begin{equation}
\label{eq:c1g0}
n - I_{n-1} -2I_{n-2} -3I_{n-3} - \cdots -(n-2)I_2 - (n-1)I_1 > 0 .
\end{equation}

\noindent First, we will use the following lemma to rule out some cases.

\begin{lemma}
\label{l:int12}
Let $\Sigma$ and $S$ be embedded spheres in a smooth $4$-manifold $M$ with $b_2^+(M) > 1$, such that $[\Sigma]^2 = -1$ and $[S]^2 = -2$. Assume $[\Sigma] \cdot [S] = k \geq 1$, then we must have $k=1$.
\end{lemma}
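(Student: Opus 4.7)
My plan is to argue by contradiction, leveraging the symplectic/$J$-holomorphic set-up already established in Step~1. In the proof of Theorem~\ref{thm:sympemb}, both $\Sigma$ and $S$ arise as $J$-holomorphic curves in the symplectic rational blow-up $(X',\omega')$ (the sphere $\Sigma=\Sigma_{-1}$ is $J$-holomorphic by Proposition~\ref{p:jholosphere}, and $S$ is one of the $C_n$-spheres made $J$-holomorphic by Lemma~\ref{l:jholocn}). Invoking Lemma~\ref{l:lius}, I may first perturb $J$ slightly so that $\Sigma$ and $S$ become embedded symplectic spheres meeting in exactly $k$ points, all of which are transverse and positive.

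Assume $k \geq 2$. I would then smooth the $k$ positive transverse intersection points in the standard orientation-preserving fashion, producing a single embedded oriented connected surface $\Sigma' \subset M$ representing the class $[\Sigma]+[S] \in H_2(M;\mathbb{Z})$. A short Euler-characteristic computation,
\[
\chi(\Sigma') = \chi(\Sigma) + \chi(S) - 2k = 4 - 2k,
\]
shows $\Sigma'$ has genus $g = k-1$, and its self-intersection is
\[
[\Sigma']^2 = [\Sigma]^2 + 2\,[\Sigma]\cdot[S] + [S]^2 = -1 - 2 + 2k = 2k-3 \geq 1.
\]

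The final step is to apply the generalized adjunction formula (Theorem~\ref{thm:genadjfor}) to $\Sigma'$ together with the Seiberg-Witten basic class $K := c_1(M,\omega')$ (which is basic by Taubes' theorem, since $M = X'$ is symplectic with $b_2^+(M) > 1$). The inequality
\[
2g - 2 \;\geq\; [\Sigma']^2 + |K \cdot [\Sigma']|
\]
then reads $2k - 4 \geq (2k - 3) + |K \cdot ([\Sigma]+[S])|$, which forces $|K \cdot ([\Sigma]+[S])| \leq -1$ — impossible. Hence $k = 1$.

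The main obstacle is not any single computation but rather ensuring the two ingredients that make adjunction bite: (i) arranging the geometric intersections to be simultaneously positive and transverse (supplied by the $J$-holomorphic set-up of Step~1 and the perturbation Lemma~\ref{l:lius}), and (ii) having a Seiberg-Witten basic class of $M$ at hand, which is guaranteed by Taubes' theorem in our symplectic context. Once these are in place, the genus and self-intersection bookkeeping at the smoothing step is routine.
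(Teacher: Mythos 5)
Your argument is correct in the setting where the lemma is actually used, but it takes a genuinely different route from the paper. The paper's proof is purely homological: it invokes the Friedman--Morgan reflection automorphisms $R_{\Sigma}$ and $R_{S}$ (Proposition~\ref{p:reflection}), realized by self-diffeomorphisms of $M$, and shows that for $k \geq 2$ alternately applying them produces infinitely many distinct classes $s_1[\Sigma]+s_2[S]$ of square $-1$ represented by embedded spheres; this would force infinitely many Seiberg--Witten basic classes, contradicting finiteness when $b_2^+(M)>1$. Your proof instead smooths the $k$ intersection points to get an embedded genus-$(k-1)$ surface in the class $[\Sigma]+[S]$ of square $2k-3 \geq 1$ and kills it with a single application of the generalized adjunction formula (Theorem~\ref{thm:genadjfor}); the bookkeeping ($\chi = 4-2k$, $g=k-1$, $2k-4 \geq 2k-3 + |K\cdot([\Sigma]+[S])|$) is all correct. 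What your approach buys is brevity and directness; what it costs is generality. The lemma as stated concerns arbitrary embedded spheres in a smooth $4$-manifold, where $[\Sigma]\cdot[S]=k$ is only an algebraic count: if the geometric intersection points are not all positive, your smoothing produces extra genus and the adjunction inequality no longer closes, whereas the reflection argument sees only homology classes and is insensitive to the signs of geometric intersections. You correctly patch this by restricting to the $J$-holomorphic/symplectic context of Step~1 (where positivity of intersections and Lemma~\ref{l:lius} are available), and since every application of the lemma in the paper (Corollaries~\ref{cor:I2In1}, \ref{cor:I1In1}, and Proposition~\ref{p:bnembedek}) takes place in exactly that context, your weaker version suffices. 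Note also that both proofs silently rely on $\mathcal{B}as_M \neq \varnothing$ --- yours explicitly via Taubes, the paper's implicitly in the step passing from infinitely many $(-1)$-sphere classes to infinitely many basic classes --- so you are not assuming anything the paper does not.
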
 

\begin{proof}
The proof will follow from the following proposition:

\begin{prop}
\label{p:reflection}
\cite{FrMo} Let $M$ be an oriented $4$-manifold and $S^2 \subset M$ be an embedded sphere with $\alpha \in H^2(M;\Z)$ the cohomology class dual to $S^2$. If $\alpha^2 = -1$ or $-2$, there is an orientation preserving self-diffeomorphism $\varphi$ of $M$ such that $\varphi^{\ast} = R_{\alpha}$, where:
\begin{equation}
R_{\alpha}(x) = x + 2(x \cdot \alpha)\alpha
\end{equation}

\noindent if $\alpha^2 = -1$ and
\begin{equation}
R_{\alpha}(x) = x + (x \cdot \alpha)\alpha
\end{equation}

\noindent if $\alpha^2 = -2$. (Note, in both cases $R_{\alpha} = -\alpha$ and $R_{\alpha}^2 =$ Id, hence often referred to as the reflection automorphism.)

\end{prop}

As a result of this proposition, the spheres $\Sigma$ and $S$ will induce orientation preserving diffeomorphisms on $M$, corresponding to the following reflection automorphisms on $H_2(M;\Z)$:
\begin{eqnarray}
R_{\Sigma}(x) &=& x + 2(x \cdot [\Sigma])[\Sigma] \\
R_{S}(x) &=& x + (x \cdot [S])[S] \, .
\end{eqnarray}

\noindent We begin with applying $R_{S}$ to $x=[\Sigma]$:
\begin{equation}
R_{S}([\Sigma]) = [\Sigma] + k[S] \, .
\end{equation}

\noindent Next, we apply $R_{\Sigma}$ to $x=[\Sigma] + k[S]$:
\begin{equation}
R_{\Sigma}([\Sigma] + k[S]) = (2k^2-1)[\Sigma] + k[S] \, .
\end{equation}

\noindent In this manner, we can continue to alternately apply $R_{S}$ and $R_{\Sigma}$, and get:
\begin{eqnarray*}
R_{S}((2k^2-1)[\Sigma] + k[S]) &=& (2k^2-1)[\Sigma] + (2k^3 - 2k)[S] : = [A_S]  \\
R_{\Sigma}([A_S]) &=& (4k^4-6k+1)[\Sigma] + (2k^3 - 2k)[S] := [A_{\Sigma S}]  \\
R_{S}([A_{\Sigma S}]) &=& (4k^4-6k+1)[\Sigma] + (4k^5-8k^3+3k)[S]   \\
\vdots &=& \vdots   \, .
\end{eqnarray*} 

\noindent We observe that as long as $k \geq 2$, the polynomials above keep growing, thus implying that there is an infinite number of spheres with homology classes of the form $x=s_1[\Sigma] + s_2[S]$ with $x^2=-1$. This cannot occur, since if it did, it would imply that there is an infinite number of Seiberg-Witten basic classes of the manifold $M$, which cannot happen if $b_2^+(M) > 1$. \end{proof}

Lemma~\ref{l:int12} immediately implies the following Corollary:   

\begin{cor}
\label{cor:I2In1}
With the intersection numbers $I_j = [\Sigma_{-1}] \cdot [S_j]$, as in section~\ref{sec:step2}, we must have $I_2 + I_3 + I_4 + \cdots + I_{n-1} \leq 1$. 
\end{cor}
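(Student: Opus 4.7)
The strategy is to upgrade Lemma~\ref{l:int12} from a bound on an individual $I_j$ to a bound on the sum $I_2 + \cdots + I_{n-1}$, by producing a single embedded $(-2)$-sphere $S_{[2,n-1]} \subset X'$ whose homology class is $[S_2] + [S_3] + \cdots + [S_{n-1}]$. Once this sphere is in hand, applying Lemma~\ref{l:int12} to the pair $(\Sigma_{-1}, S_{[2,n-1]})$ will immediately yield the corollary.

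To construct $S_{[2,n-1]}$, I use that $S_2, S_3, \ldots, S_{n-1}$ is a linear chain of $(-2)$-spheres in which consecutive pairs meet transversely and positively in one point, and non-consecutive pairs are disjoint. I smooth each of the $n-3$ intersection points $S_i \cap S_{i+1}$ inside a small Darboux ball around that point, using the local model $\{z_1 z_2 = 0\} \leadsto \{z_1 z_2 = \epsilon\}$ in $\mathbb{C}^2$. Since the balls can be chosen disjoint from each other, from the remaining sphere $S_1$, and from $\Sigma_{-1}$ (which meets each $S_i$ only at finitely many points away from $S_i \cap S_{i+1}$), the output is a connected embedded surface $S_{[2,n-1]}$ whose homology class is $[S_2] + \cdots + [S_{n-1}]$. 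The pairing computation
\begin{equation*}
[S_{[2,n-1]}]^2 \;=\; \sum_{i=2}^{n-1}[S_i]^2 + 2\sum_{i=2}^{n-2}[S_i]\cdot [S_{i+1}] \;=\; -2(n-2) + 2(n-3) \;=\; -2
\end{equation*}
gives the self-intersection, and an Euler-characteristic count $\chi(S_{[2,n-1]}) = 2(n-2) - 2(n-3) = 2$ (each of the $n-3$ local smoothings drops $\chi$ by $1$, on top of the $(n-3)$ already subtracted from the naive sum $\sum \chi(S_i)$ by inclusion--exclusion) confirms that $S_{[2,n-1]}$ is an embedded $(-2)$-sphere.

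With this sphere in hand, the algebraic intersection number
\begin{equation*}
[\Sigma_{-1}] \cdot [S_{[2,n-1]}] \;=\; I_2 + I_3 + \cdots + I_{n-1}
\end{equation*}
is non-negative since each $I_j \geq 0$. Because $b_2^+(X') = b_2^+(X) > 1$, and $\Sigma_{-1}$, $S_{[2,n-1]}$ are embedded spheres of self-intersections $-1$ and $-2$ respectively, Lemma~\ref{l:int12} forces this sum to be at most $1$, which is the corollary. The only genuinely delicate step is the smoothing construction itself: one must check that iterating the local model produces an honestly embedded (rather than merely immersed) sphere, and that the smoothings can be localized to arbitrarily small neighborhoods of each $S_i \cap S_{i+1}$ so that no new intersections are introduced. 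This is standard for transverse positive intersections in a smooth $4$-manifold, so no further geometric input is required beyond what the paper already uses.
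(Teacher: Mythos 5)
Your proposal is correct and follows essentially the same route as the paper: the paper likewise forms the sphere $S_2^{n-1}$ by smoothing the transverse intersection points of the chain $S_2,\ldots,S_{n-1}$, notes that its class is $[S_2]+\cdots+[S_{n-1}]$ with square $(-2)$, and applies Lemma~\ref{l:int12} to the pair $(\Sigma_{-1}, S_2^{n-1})$. Your additional verification of the Euler characteristic and the local smoothing model only makes explicit details the paper leaves implicit.
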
 

\begin{proof}
The spheres $S_j$ with $2 \leq j \leq n-1$ intersect transversally with the neighboring spheres in the plumbing configuration $C_n$. Therefore, we can construct the sphere $S_2^{n-1}$, which is the union of the spheres $S_j, 2 \leq j \leq n-1$, with all the transverse intersection points smoothed out. The sphere $S_2^{n-1}$ has self-intersection $(-2)$, since its homology class is:
\begin{equation}
[S_2^{n-1}] = [S_2] + [S_3] + [S_4] + \cdots + [S_{n-1}] \, .
\end{equation}

\noindent Now we can apply Lemma~\ref{l:int12} with $\Sigma = \Sigma_{-1}$ and $S = S_2^{n-1}$, and conclude that $[\Sigma_{-1}] \cdot [S_2^{n-1}]$ is at most $1$, implying:
\begin{equation}
[\Sigma_{-1}] \cdot [S_2^{n-1}] = I_2 + I_3 + I_4 + \cdots + I_{n-1} \leq 1 \, .
\end{equation}   \end{proof}                                                                                

As a direct consequence of Corollary~\ref{cor:I2In1} and (\ref{eq:c1gamma}), we have the following:
 
\begin{cor}
\label{cor:c1pos}
If $c_1(X,\omega) \cdot \gamma > 0$, with $\gamma = nD + e^2$ as defined in section~\ref{sec:step2}, then there is only one $j$, $1 \leq j \leq n-1$, for which $I_j = 1$ and $I_k = 0$ if $j \neq k$.
\end{cor}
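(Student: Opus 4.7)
The plan is to combine the explicit formula \eqref{eq:c1gamma} for $c_1(X,\omega) \cdot \gamma$ with the intersection bound from Corollary~\ref{cor:I2In1}, and exploit the fact that each $I_j$ is a non-negative integer (by the positivity and transversality of intersections in Proposition~\ref{p:sympemb}). Rewriting \eqref{eq:c1gamma} as
\begin{equation*}
c_1(X,\omega) \cdot \gamma \;=\; n - (n-1)I_1 - \sum_{j=2}^{n-1} (n-j)\, I_j,
\end{equation*}
the hypothesis $c_1(X,\omega) \cdot \gamma > 0$ becomes a tight linear inequality, and the bound $I_2 + I_3 + \cdots + I_{n-1} \leq 1$ from Corollary~\ref{cor:I2In1} leaves only two possibilities for the tail: either all of $I_2, \ldots, I_{n-1}$ vanish, or exactly one $I_{j_0} = 1$ with $2 \leq j_0 \leq n-1$ and the remaining $I_j$ ($j \geq 2$) vanish.

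In the second possibility, the formula reduces to $c_1(X,\omega) \cdot \gamma = j_0 - (n-1)I_1$. Since $j_0 \leq n-1$ and $I_1 \in \Z_{\geq 0}$, positivity forces $I_1 = 0$, and we land precisely in the conclusion of the corollary (exactly $I_{j_0} = 1$). In the first possibility, the formula becomes $c_1(X,\omega) \cdot \gamma = n - (n-1)I_1$, which is positive only if $I_1 \in \{0,1\}$. The case $I_1 = 1$ again produces the desired conclusion.

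The only remaining scenario is $I_1 = I_2 = \cdots = I_{n-1} = 0$, where the corollary's conclusion fails but the numerical inequality $c_1(X,\omega) \cdot \gamma = n > 0$ is satisfied. This is the main obstacle, and it is ruled out geometrically rather than combinatorially: if $\Sigma_{-1}$ has empty intersection with every sphere of the $C_n$ configuration, then it is supported entirely in $X' \setminus N(C_n)$, and hence descends to a symplectic $(-1)$-sphere inside $\tilde{X}' \cong X$. This contradicts the standing assumption $[c_1(X,\omega)] = -[\omega]$, which prohibits symplectic spheres of square $-1$ or $-2$ (as noted in the remark following Theorem~\ref{thm:sympemb}). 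With this degenerate case excluded, the case analysis above gives exactly one $I_j = 1$ and all the others zero, completing the proof.
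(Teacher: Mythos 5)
Your proof is correct and follows essentially the same route as the paper: the paper presents Corollary~\ref{cor:c1pos} as ``a direct consequence of Corollary~\ref{cor:I2In1} and (\ref{eq:c1gamma})'', which is precisely the case analysis you carry out. The one place you go beyond the paper is the degenerate case $I_1 = I_2 = \cdots = I_{n-1} = 0$, which satisfies $c_1(X,\omega)\cdot\gamma = n > 0$ but not the stated conclusion; the paper passes over this silently (it is implicitly excluded earlier, where the paper asserts that every intersection tuple lies in $\A$ or some $\E_k$, a claim that already fails for the all-zero tuple), whereas you rule it out correctly by observing that a $\Sigma_{-1}$ disjoint from the $C_n$ configuration would survive the rational blow-down and give a symplectic $(-1)$-sphere in $X$, contradicting $[c_1(X,\omega)] = -[\omega]$. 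That is a small but genuine gap in the paper's exposition which your argument fills.
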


Next, we will use toric and almost-toric fibrations, introduced in section~\ref{sec:toric}, to show that for those cases where $c_1(X,\omega) \cdot \gamma > 0$, we have $\omega \cdot \gamma > 0$.

\begin{prop}
\label{p:omegapos}
If there is only one $j$, $1 \leq j \leq n-1$, for which $I_j = 1$ and $I_k = 0$ if $j \neq k$, then $\omega \cdot \gamma > 0$.
\end{prop}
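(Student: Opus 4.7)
The plan is to use the almost-toric structure on $B_n$ from Section~\ref{sec:toric} to construct $e^2$ as an explicit visible disk and then exploit the non-uniqueness of the symplectic rational blow-down to make $|\int_{e^2}\omega|$ small compared to $n\,\mathrm{Area}(D)$.

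Because $B_n$ is a rational homology ball, $H^1(B_n;\R) = H^2(B_n;\R) = 0$, so $\omega|_{B_n} = d\alpha$ globally, with $\alpha$ unique up to exact 1-forms, and the integral $\int_{e^2}\omega = \int_{n\,\partial D}\alpha$ depends only on the 1-cycle $n\,\partial D\subset\partial B_n$, not on the chosen filling $e^2$. First I would pin down $\partial D \subset \partial B_n \cong L(n^2,n-1)$: since $I_j = 1$ for exactly one $j$ and $I_k = 0$ for $k\neq j$, the intersection $\Sigma_{-1}\cap N(C_n)$ is a small Darboux disk transverse to $S_j$ at a single point, so $\partial D$ is a meridian of $S_j$. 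In the toric diagram of $C_n$ (Example~\ref{ex:toriccn}), this meridian corresponds to the collapsing class of the $j$-th edge, which one transports to the top 1-stratum of the almost-toric base of $B_n$ via the gluing of Figure~\ref{f:toricrbd}.

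Next I would take $e^2 = \Sigma_\nu$ to be a visible disk over a straight segment $\nu$ in the almost-toric base of $B_n$ (Figure~\ref{f:toricbn}), running from an appropriate point on the top 1-stratum down toward the node $s$, with compatible covector chosen so that $[\partial\Sigma_\nu] = n[\partial D]$ in $H_1(\partial B_n)$. Proposition~\ref{p:vissurfarea} then provides
\begin{equation*}
  \int_{e^2}\omega \;=\; 2\pi\int_0^1 \nu'(t)\cdot v(t)\,dt,
\end{equation*}
which scales linearly with the linear dimensions of the almost-toric diagram of $B_n$.

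Finally, as noted at the start of the proof of Theorem~\ref{thm:sympemb}, we are free to replace $(X,\omega)$ by $(\tilde{X}',\tilde{\omega}')$, and the symplectic volume of $B_n \subset \tilde{X}'$ is controlled by the choice of neighborhood $N(C_n)$ of $C_n \subset X'$. By shrinking $N(C_n)$ transversally (holding the spheres of $C_n$ and their fixed symplectic areas in place), both the volume of $B_n$ and the area of the cap $\Sigma_{-1}\cap N(C_n)$ tend to zero, so $\mathrm{Area}(D) \to \mathrm{Area}(\Sigma_{-1}) > 0$ stays bounded below while $|\int_{e^2}\omega| \to 0$. For sufficiently thin $N(C_n)$ this gives $\omega\cdot\gamma = n\,\mathrm{Area}(D) + \int_{e^2}\omega > 0$. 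The main obstacle is the lattice-level matching in the middle paragraph: for each $j\in\{1,\dots,n-1\}$ one must verify that the meridian of $S_j$, multiplied by $n$, is indeed the boundary class of a visible disk in the almost-toric model of $B_n$, using the monodromy around the node and the collapsing covectors of the $C_n$-diagram; once that matching is in place, Proposition~\ref{p:vissurfarea} together with the shrinking argument closes the proof.
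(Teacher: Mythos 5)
Your overall framework is the same as the paper's (represent $e^2$ as a visible disk in the almost-toric base of $B_n$ and invoke Proposition~\ref{p:vissurfarea}), but you explicitly defer the one step that actually carries the content of the proposition. The paper's proof consists essentially of the ``lattice-level matching'' you label as the main obstacle: the part of $\Sigma_{-1}$ meeting $S_j$ is a visible surface over a curve $\mu_j^1$ with compatible covector $u_j = \left[\begin{smallmatrix}-1\\ n+1\end{smallmatrix}\right]$, so the boundary class $n\,\partial D$ corresponds to $n\left[\begin{smallmatrix}-1\\ n+1\end{smallmatrix}\right] = \left[\begin{smallmatrix}-n\\ n^2+n\end{smallmatrix}\right]$, and one checks that this differs from $\left[\begin{smallmatrix}-1\\ n\end{smallmatrix}\right]$ by exactly the collapsing covector $\left[\begin{smallmatrix}1-n\\ n^2\end{smallmatrix}\right]$ of the top edge, hence is homologous to it in $L(n^2,n-1)$; since $\left[\begin{smallmatrix}-1\\ n\end{smallmatrix}\right]$ is precisely the vanishing covector of the node $s$, the extending curve $\mu_j^2$ does define a visible disk. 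Without this computation your proposal does not establish that a visible disk with the required boundary exists, so the proof is incomplete; naming the verification and gesturing at ``monodromy and collapsing covectors'' is not a substitute for doing it.

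Your closing step also diverges from the paper in a way that introduces a second unproven claim. The paper simply arranges (via Proposition~\ref{p:vissurfarea}) that the visible disk $e^2$ is symplectic, so $\int_{e^2}\omega > 0$ and positivity of $\omega\cdot\gamma$ is immediate; no limiting argument is needed. You instead propose to shrink $N(C_n)$ so that $\bigl|\int_{e^2}\omega\bigr|\to 0$ while $\mathrm{Area}(D)$ stays bounded below. Your observation that $H_2(B_n;\R)=0$ makes $\int_{e^2}\omega$ independent of the filling is correct and is a nice simplification, but the shrinking argument requires knowing that the symplectic rational blow-down can be performed with the glued-in $B_n$ of arbitrarily small symplectic size while preserving $[c_1]=-[\omega]$, $b_2^+>1$ and the basic-class hypothesis, and that a filling of $n\,\partial D$ with area tending to zero actually exists in each such $B_n$ (small volume of a $4$-manifold does not by itself bound the area of a $2$-chain). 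Neither point is addressed. If you carry out the covector matching, the paper's direct route gives $\int_{e^2}\omega>0$ outright and the entire limiting apparatus becomes unnecessary.
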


\begin{figure}[ht!]
\labellist
\small\hair 2pt
\pinlabel $S_1$ at 25 0
\pinlabel $S_2$ at 58 4
\pinlabel $S_3$ at 80 8
\pinlabel $S_{n-1}$ at 103 21
\pinlabel $L(n^2,n-1)$ at 48 65
\pinlabel $\mu^1_1$ at 30 35
\pinlabel $S_1$ at 142 0
\pinlabel $S_2$ at 175 4
\pinlabel $S_3$ at 197 8
\pinlabel $S_{n-1}$ at 220 21
\pinlabel $L(n^2,n-1)$ at 165 65
\pinlabel $\mu^1_2$ at 155 35
\pinlabel $S_1$ at 287 0
\pinlabel $S_2$ at 320 4
\pinlabel $S_3$ at 342 8
\pinlabel $S_{n-1}$ at 365 21
\pinlabel $L(n^2,n-1)$ at 310 65
\pinlabel $\mu^1_{n-1}$ at 300 30
\endlabellist
\centering
\includegraphics[height = 35mm, width = 130mm]{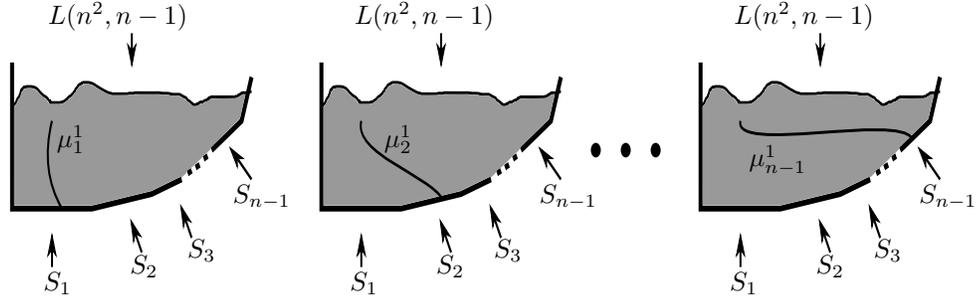}
\caption{Visible surfaces represented by curves $\mu_j^1$ in a toric model for $C_n$}
\label{f:toriccnD}
\end{figure}

\begin{proof}
If there is only one $j$, $1 \leq j \leq n-1$, for which $I_j = 1$ and $I_k = 0$ if $j \neq k$, then by definition, the sphere $\Sigma_{-1}$ only intersects the sphere $S_j$ of the $C_n$ configuration once at a point $a_j$. We can present part of $\Sigma_{-1}$ as it intersects $S_j$, by a \textit{visible surface} (see Definition~\ref{def:vissurf}), with the curve $\mu_j^1$ and a compatible covector $u_j = \left[\begin{array}{c}-1\\n+1\end{array}\right]$, for all $1 \leq j \leq n-1$, (see Figure~\ref{f:toriccnD}). We have $[\Sigma_{-1}] \cdot [S_j] = 1$, since $|u_j \times v_j | = 1$ for all $j$, where the $v_j = \left[\begin{array}{c}1-j\\(j-1)n +j\end{array}\right]$ are the \textit{collapsing covectors} corresponding to the part of the $1$-stratum that represents the spheres $S_j$, thus satisfying item (2) in the definition of visible surfaces (Definition~\ref{def:vissurf}). 

After we perform the rational blow-down, as we do in the beginning of \textit{Step 2}, we obtain the almost-toric base, as seen in Figure~\ref{f:toricbnD} (also see Figure~\ref{f:toricrbd}). We recall here that the class $\gamma = nD + e^2$, where $D$ is the ``remains" of $\Sigma_{-1}$ in $X$: $D = (X'/N(C_n)) \cap \Sigma_{-1}$. Since $\Sigma_{-1}$ is a symplectic sphere, we have $\omega \cdot nD > 0 $. In order to show that $\omega \cdot \gamma > 0$, we need to show that $\omega$ is positive on the $2$-cell, $e^2$, which ``closes up" $nD$ i.e. $\partial e^2 = \partial nD$. We will do this by exhibiting the disk $e^2 \in B_n$ as a visible surface in the almost-toric fibration of $B_n$.

\begin{figure}[ht!]
\labellist
\small\hair 2pt
\pinlabel $L(n^2,n-1)$ at 48 100
\pinlabel $L(n^2,n-1)$ at 173 100
\pinlabel $L(n^2,n-1)$ at 348 100
\pinlabel $s$ at 32 52
\pinlabel $s$ at 157 52
\pinlabel $s$ at 332 52
\pinlabel $b_1$ at 23 78
\pinlabel $b_2$ at 170 78
\pinlabel $b_3$ at 373 72
\pinlabel $\upsilon$ at 17 47
\pinlabel $\upsilon$ at 142 47
\pinlabel $\upsilon$ at 317 47
\pinlabel $\mu_1^2$ at 28 63
\pinlabel $\mu_2^2$ at 170 63
\pinlabel $\mu_{n-1}^2$ at 335 65
\endlabellist
\centering
\includegraphics[height = 40mm, width = 130mm]{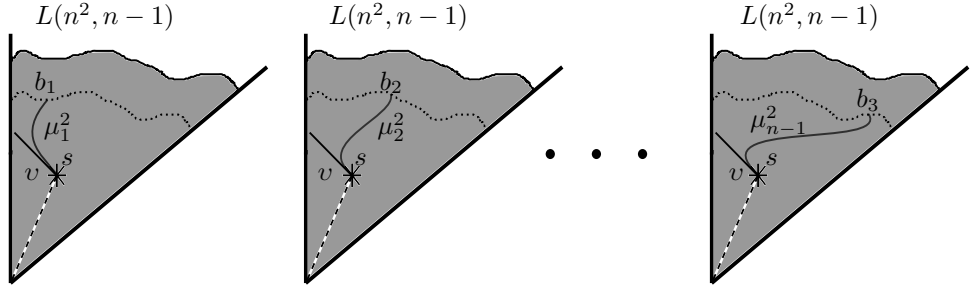}
\caption{Visible surfaces represented by curves $\mu_j^2$ in almost-toric model for $B_n$}
\label{f:toricbnD}
\end{figure}

In order to represent $e^2 \subset \gamma$ as a visible surface in the almost-toric base in Figure~\ref{f:toricbnD}, we need to choose a curve $\mu_j^2$, such that it extends the curve $\mu_j^1$ and whose collection of compatible classes in $H_2(F_b,\Z)$, for all $b \in \mu_j^2$, forms a disk. We can arrange the visible surface represented by $\mu_j^2$ to be symplectic, because of Proposition~\ref{p:vissurfarea}. Also, we can arrange $e^2$ such that it hits the Lagrangian core $\L_n$, represented by the straight line $\upsilon$, thus the curve $\mu_j^2$ hits $\upsilon$ and then the node $s$. 

On one hand, the compatible class of $\mu_j^2$ must be the same as the vanishing class of the node $s$, in order for $\mu_j^2$ to represent a visible surface (and be a disk). On the other hand, the curve $\mu_j^2$ is a continuation of the curve $\mu_j^1$. However, the curve $\mu_j^1$ represents the visible surface for $D \in C_n$. At the point $b_j$ in the toric fibration, which lies on the curve representing the boundary $\partial C_n = L(n^2,n-1)$, the compatible covector is $u_j = \left[\begin{array}{c}-1\\n+1\end{array}\right]$, corresponding to the class $\partial D \in L(n^2,n-1)$. When we begin the curve $\mu_j^2$, at the point $b_j$, the compatible class should correspond to $n \partial D \in L(n^2,n-1)$, making the compatible covector $n \left[\begin{array}{c}-1\\n+1\end{array}\right] = \left[\begin{array}{c}-n\\n^2+n\end{array}\right]$. In $\partial C_n = \partial B_n = L(n^2,n-1)$, we have the compatible class with the covector $\left[\begin{array}{c}-n\\n^2+n\end{array}\right]$ homologous to the compatible class with the covector $\left[\begin{array}{c}-1\\n\end{array}\right]$. As a result, the curve $\mu_j^2$ will have a compatible covector $u'_j = \left[\begin{array}{c}-1\\n\end{array}\right]$ for all $1 \leq j \leq n-1$, exactly the same as the vanishing covector of the node $s$. Consequently, the curves $\mu_j^2$ do indeed represent visible surfaces, the $2$-cells $e^2$ in the construction of $\gamma$.

As a result, we have explicitly exhibited that the class $\gamma = nD + e^2$ is such that $\omega \cdot \gamma > 0$, by representing $D$ and $e^2$ as visible surfaces in the almost-toric fibrations of $C_n$ and $B_n$, with positive symplectic area. \end{proof}

\begin{cor}
\label{cor:c1omega}
Let $\gamma$ be as above. If $c_1(X,\omega) \cdot \gamma  > 0$, then $\omega \cdot \gamma >0$.
\end{cor}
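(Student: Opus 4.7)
The plan is to observe that this corollary is an immediate consequence of chaining together the two results just proved in this step: Corollary~\ref{cor:c1pos} and Proposition~\ref{p:omegapos}. Concretely, I would assume $c_1(X,\omega)\cdot \gamma >0$ and apply Corollary~\ref{cor:c1pos} to conclude that the intersection pattern of $\Sigma_{-1}$ with the spheres $S_j$ of the $C_n$ configuration is of the very rigid form where exactly one $I_j$ equals $1$ and all the other $I_k$ vanish. That rigidity is precisely the hypothesis of Proposition~\ref{p:omegapos}, whose conclusion is $\omega \cdot \gamma >0$, which is what we want.

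In a little more detail, the formula (\ref{eq:c1gamma}) together with the constraint $I_2+I_3+\cdots+I_{n-1}\le 1$ supplied by Lemma~\ref{l:int12} (in the form of Corollary~\ref{cor:I2In1}) leaves almost no room: the only way for $c_1(X,\omega)\cdot \gamma = n - I_{n-1} - 2I_{n-2} - \cdots - (n-1)I_1$ to be strictly positive, given that no $I_j$ can be negative, is the configuration singled out in Corollary~\ref{cor:c1pos}. Feeding this back into the explicit visible-surface construction of Proposition~\ref{p:omegapos}, where both the piece $nD$ and the capping $2$-cell $e^2$ are realized as symplectic visible surfaces (of positive symplectic area by Proposition~\ref{p:vissurfarea}) in the toric model of $C_n$ and the almost-toric model of $B_n$, respectively, then yields $\omega\cdot \gamma >0$.

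There is no real obstacle to overcome here, since all the heavy lifting (the intersection-number bound via the reflection argument, and the visible-surface identification of the compatible covector $\bigl[\begin{smallmatrix}-1\\n\end{smallmatrix}\bigr]$ with the vanishing covector of the node $s$) was already carried out in the proofs of Lemma~\ref{l:int12} and Proposition~\ref{p:omegapos}. The proof of the corollary should therefore be a one- or two-sentence combination. The content worth flagging is the consequence: once we have $c_1(X,\omega)\cdot \gamma >0 \Rightarrow \omega\cdot \gamma >0$, the hypothesis $[c_1(X,\omega)] = -[\omega]$ is directly contradicted, so no symplectic embedding $B_n\hookrightarrow (X,\omega)$ of type $\A_1$ (the set of $(n-1)$-tuples producing positive $c_1\cdot \gamma$) can exist. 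This completes Step 3 of the argument, leaving only the regime $c_1(X,\omega)\cdot \gamma \le 0$, which will be handled in Step 4.
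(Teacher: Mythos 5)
Your proposal matches the paper's proof exactly: the corollary is stated there as a direct consequence of Corollary~\ref{cor:c1pos} and Proposition~\ref{p:omegapos}, which is precisely the two-step chaining you describe. The additional commentary you give (on why the rigidity forces the hypothesis of Proposition~\ref{p:omegapos} and how this rules out type $\A_1$) is accurate and consistent with the surrounding text of Step 3.
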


\begin{proof}
This is a direct consequence of Corollary~\ref{cor:c1pos} and Proposition~\ref{p:omegapos}. \end{proof}

As a result of Corollary~\ref{cor:c1omega}, we have proved that embeddings of $B_n \hookrightarrow X$ of type $\A_1 \subset \A$ cannot occur, where $\A_1$ is the set of $(n-1)$-tuples $\left\langle \alpha_1, \alpha_2, \alpha_3, \ldots, \alpha_{n-1} \right\rangle$, such that $c_1(X) \cdot \gamma  > 0$ in terms of the intersection numbers $I_j = \alpha_j$.

\vspace{.2in}

\subsection{Step 4} 
\label{sec:step4}
In this final step, we will show that symplectic embeddings of $B_n \hookrightarrow X$ of type $(\A - \A_1)$ (those of type $\A$ and not $\A_1$) and type $\E_k$, $k \geq c_1^2(X,\omega) + 2$ cannot occur. These sets of $(n-1)$-tuples precisely correspond with $c_1(X,\omega) \cdot \gamma \leq 0$, i.e. the cases where:
\begin{equation}
n - I_{n-1} -2I_{n-2} -3I_{n-3} - \cdots -(n-2)I_2 - (n-1)I_1 \leq 0 .
\end{equation}

\begin{lemma}
\label{l:I1}
Let $I_1 = [\Sigma_{-1}] \cdot [S_1]$ be as above, then $I_1 \leq n$.
\end{lemma}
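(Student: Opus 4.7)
The plan is to argue by contradiction, assuming $I_1 \geq n+1$, and derive a violation of the simple type property of an auxiliary symplectic manifold via the generalized adjunction formula for immersed spheres (Theorem~\ref{thm:genadjimm}). The key idea is to symplectically blow down $\Sigma_{-1}$ so that $S_1$ becomes an immersed sphere whose combinatorics are forced by $I_1$, and then extract a ``new'' basic class whose self-intersection is too large.

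First I would symplectically blow down the embedded symplectic $(-1)$ sphere $\Sigma_{-1}$ supplied by Proposition~\ref{p:sympemb}, producing a symplectic $4$-manifold $(X'',\omega'')$ with $X' \cong X'' \# \overline{\C P^2}$. Since $b_2^+(X'') = b_2^+(X') = b_2^+(X) > 1$, the manifold $X''$ has simple type and $SW_{X''}(c_1(X'',\omega'')) = \pm 1$ by Taubes. Let $\pi: X' \to X''$ denote the blowdown. Because $S_1$ meets $\Sigma_{-1}$ positively and transversally in $I_1$ points, the image $\pi(S_1) \subset X''$ is an immersed symplectic sphere whose $I_1$ branches come together at a single point; after a small generic perturbation it has exactly $p = \binom{I_1}{2}$ positive transverse double points. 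Using the splitting $H_2(X';\Z) = H_2(X'';\Z) \oplus \Z\langle [\Sigma_{-1}]\rangle$, one has $[\pi(S_1)] = [S_1] + I_1[\Sigma_{-1}]$, and a direct calculation using $c_1(X'',\omega'') = c_1(X',\omega') + [\Sigma_{-1}]$, together with $c_1(X',\omega')\cdot[S_1] = -n$ and $c_1(X',\omega')\cdot[\Sigma_{-1}] = 1$ from the sphere adjunction, gives
\begin{equation*}
[\pi(S_1)]^2 = I_1^2 - (n+2), \qquad c_1(X'',\omega'') \cdot [\pi(S_1)] = I_1 - n.
\end{equation*}

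Next I would apply Theorem~\ref{thm:genadjimm} with $x = [\pi(S_1)]$ and $K = c_1(X'',\omega'')$ (which is basic and has $\dim \M_{X''}(K) = 0$ by simple type). The inequality $2p - 2 \geq x^2 + |x \cdot K|$ reads
\begin{equation*}
I_1^2 - I_1 - 2 \;\geq\; I_1^2 - n - 2 + |I_1 - n|,
\end{equation*}
equivalently $n - I_1 \geq |I_1 - n|$, which fails precisely when $I_1 > n$. Since $x \cdot K = I_1 - n > 0$ by our standing assumption, Theorem~\ref{thm:genadjimm} forces $SW_{X''}(K + 2x) = SW_{X''}(K) \neq 0$, so $K' := c_1(X'',\omega'') + 2[\pi(S_1)]$ is a Seiberg-Witten basic class of $X''$.

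Finally I would derive the contradiction from simple type. A short computation yields
\begin{equation*}
(K')^2 - c_1(X'',\omega'')^2 \;=\; 4(I_1 - n) + 4(I_1^2 - n - 2) \;=\; 4\bigl(I_1^2 + I_1 - 2n - 2\bigr).
\end{equation*}
For $I_1 \geq n + 1 \geq 2$ one has $I_1^2 + I_1 - 2n - 2 \geq (n+1)^2 + (n+1) - 2n - 2 = n^2 + n > 0$, so $(K')^2 > c_1(X'',\omega'')^2$. This contradicts the fact that every basic class of a simple type symplectic $4$-manifold has square equal to $c_1^2$. Hence $I_1 \leq n$. The main technical point I would need to be careful about is the identification of $\pi(S_1)$ as an immersed symplectic sphere with exactly $\binom{I_1}{2}$ positive double points; this follows from a standard local model of the symplectic blow-down near $\Sigma_{-1}$ together with positivity of intersections, but it is the one place where the geometric input is essential rather than purely algebraic.
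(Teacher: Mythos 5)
Your proof is correct, but it takes a genuinely different route from the paper's. The paper stays upstairs in $(X',\omega')$ and argues in two lines: Theorem~\ref{thm:genadjimm} applied to $\Sigma_{-1}$ shows $L = c_1(X',\omega') + 2[\Sigma_{-1}]$ is a basic class, and then the adjunction inequality for the \emph{embedded} sphere $S_1$ against $L$ gives $|{-n} + 2I_1| \leq n$, hence $I_1 \leq n$ directly. You instead blow down $\Sigma_{-1}$, turn $S_1$ into an immersed sphere with $\binom{I_1}{2}$ positive double points, and run the immersed adjunction inequality downstairs against $c_1(X'',\omega'')$, extracting a basic class of the wrong square when $I_1 > n$. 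What your version costs is the geometric input you correctly flag — that the image of $S_1$ is an immersed symplectic sphere with exactly $\binom{I_1}{2}$ positive transverse double points after perturbation — but this is a step the paper itself uses verbatim later (in Proposition~\ref{p:bnembedek} and Lemma~\ref{l:crn}), so it is squarely within the toolkit. What it buys is that you explicitly engage both alternatives of Theorem~\ref{thm:genadjimm} and close the argument via simple type; the paper's proof quietly uses only the inequality branch of that theorem, and if one insists on treating the dichotomy honestly there, the case $I_1 = n+1$ requires an extra word (the class $L + 2[S_1]$ has the same square as $L$ precisely then, so the simple-type objection does not immediately apply), whereas in your computation $(K')^2 - c_1^2(X'') = 4(n^2+n) > 0$ at $I_1 = n+1$ and the contradiction is uniform for all $I_1 \geq n+1$. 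In short: the paper's argument is shorter and more elementary; yours is longer but arguably more airtight and anticipates the blow-down technique used in Step 4.
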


\begin{proof}
First, we apply the generalized adjunction formula (Theorem~\ref{thm:genadjimm}) to the sphere $\Sigma_{-1}$, which gives us that $c_1(X',\omega') + 2[\Sigma_{-1}]$ is a SW basic class of $X'$. Second, we apply Theorem~\ref{thm:genadjimm} to the sphere $S_1$, since $[S_1]^2 = -n-2$ for a SW basic class $L$ we have: 
\begin{equation}
|L \cdot [S_1] | \leq n
\end{equation}

\noindent if we let $L = c_1(X',\omega') + 2[\Sigma_{-1}]$, then we have:
\begin{equation}
|(c_1(X',\omega') + 2[\Sigma_{-1}]) \cdot [S_1]| = |c_1(X',\omega') \cdot [S_1] + 2[\Sigma_{-1}] \cdot [S_1] | \leq n \, .
\end{equation}

\noindent Since $S_1$ is a symplectic sphere, we have $c_1(X',\omega') \cdot [S_1] = -n$, therefore, we must have: $I_1 = [\Sigma_{-1}] \cdot [S_1] \leq n$.    \end{proof}

\begin{cor}
\label{cor:I1In1}
Let $I_i = [\Sigma_{-1}] \cdot [S_i]$, $1 \leq i \leq n-1$, be as above, then $I_1 + I_2 + I_3 + \cdots + I_{n-1} \leq n$.
\end{cor}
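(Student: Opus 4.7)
The plan is to run the argument of Lemma \ref{l:I1} with $S_1$ replaced by a single embedded sphere $T \subset X'$ that realizes the full class $[S_1] + [S_2] + \cdots + [S_{n-1}]$. Because the spheres $S_1, \ldots, S_{n-1}$ are symplectic and, after a preliminary isotopy, meet orthogonally in the $C_n$-configuration (see Lemmas \ref{l:jholocn}--\ref{l:localortho}), the $n-2$ transverse intersection points along the chain may all be symplectically smoothed simultaneously, producing an embedded symplectic sphere $T$ in the class $[T] = [S_1] + [S_2] + \cdots + [S_{n-1}]$. An Euler-characteristic count confirms that the result is in fact a sphere: the chain $\bigcup_i S_i$ has $\chi = 2(n-1) - (n-2) = n$, and each of the $n-2$ smoothings drops $\chi$ by $1$, leaving $\chi(T) = 2$.

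Using $[S_1]^2 = -(n+2)$, $[S_i]^2 = -2$ for $2 \leq i \leq n-1$, and $[S_i]\cdot[S_{i+1}] = 1$, a short bookkeeping gives
\begin{equation*}
[T]^2 \;=\; \sum_{i=1}^{n-1}[S_i]^2 \;+\; 2\!\!\sum_{1\leq i<j\leq n-1}\!\![S_i]\cdot[S_j] \;=\; -(n+2) - 2(n-2) + 2(n-2) \;=\; -(n+2),
\end{equation*}
together with $c_1(X',\omega')\cdot[T] = c_1(X',\omega')\cdot[S_1] + 0 + \cdots + 0 = -n$ (by the adjunction formula applied to each symplectic sphere) and $[T]\cdot[\Sigma_{-1}] = I_1 + I_2 + \cdots + I_{n-1}$ directly from the definition of the $I_j$. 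The key point is that $T$ has exactly the same self-intersection and canonical pairing as the single sphere $S_1$ treated in Lemma \ref{l:I1}.

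Now proceed verbatim as in Lemma \ref{l:I1}, but with $T$ in place of $S_1$. The first half of that proof already establishes that $L := c_1(X',\omega') + 2[\Sigma_{-1}]$ is a Seiberg-Witten basic class of $X'$. Applying Theorem \ref{thm:genadjimm} to the embedded sphere $T$ (so $p = 0$) with this basic class $L$ then yields
\begin{equation*}
-2 \;\geq\; [T]^2 + |L\cdot[T]|, \qquad \text{i.e.,} \qquad |L\cdot[T]| \;\leq\; n.
\end{equation*}
Substituting $L\cdot[T] = c_1(X',\omega')\cdot[T] + 2[\Sigma_{-1}]\cdot[T] = -n + 2(I_1 + I_2 + \cdots + I_{n-1})$ rearranges at once to
\begin{equation*}
0 \;\leq\; I_1 + I_2 + \cdots + I_{n-1} \;\leq\; n,
\end{equation*}
which is the desired inequality.

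The only mildly delicate point is the alternative conclusion $SW_{X'}(L) = SW_{X'}(L \pm 2[T])$ appearing in Theorem \ref{thm:genadjimm}; this is excluded in exactly the same (implicit) manner as the corresponding step in the proof of Lemma \ref{l:I1} for $S_1$, so no new input is required beyond what is already used there.
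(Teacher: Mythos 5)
Your proposal is correct and follows essentially the same route as the paper: the paper likewise smooths the chain $S_1 \cup \cdots \cup S_{n-1}$ into a single embedded symplectic sphere in the class $[S_1]+\cdots+[S_{n-1}]$ with square $-(n+2)$ and then applies Lemma~\ref{l:I1} to it. Your version merely makes explicit the Euler-characteristic and intersection-number bookkeeping that the paper leaves implicit.
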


\begin{proof}
The spheres $S_i$ intersect each other transversally in the $C_n$ configuration. We can (as done in Corollary~\ref{cor:I2In1}) construct the sphere $S^{n-1}_1$, which is the union of the spheres $S_i$, $1 \leq i \leq n-1$, with all the intersection points smoothed out (this can be done symplectically). The self-intersection number of the sphere $S^{n-1}_1$ is $(-n-2)$, since its homology class is:
\begin{equation}
[S^{n-1}_1] = [S_1] + [S_2] + [S_3] + \cdots + [S_{n-1}] .
\end{equation}

\noindent Consequently, by applying Lemma~\ref{l:I1}, since $I_1 \leq n$ then so is $I_1 + I_2 + I_3 + \cdots + I_{n-1} \leq n$. \end{proof}

In light of Corollaries~\ref{cor:I2In1}, \ref{cor:I1In1} and Lemma~\ref{l:I1}, the intersection patterns of $\Sigma_{-1}$ with the spheres of the $C_n$ configuration, giving us $c_1(X,\omega) \cdot \gamma \leq 0$, which we still have to rule out are: 

\begin{enumerate}
\item
$2 \leq I_1 \leq n$ and $I_j = 0$ for all $2 \leq j \leq n-1$

\item
$1 \leq I_1 \leq n-1$ and $I_j = 1$ for one $2 \leq j \leq n-1$ .
\end{enumerate}

\begin{lemma}
\label{l:I1n}
The following intersection configurations:
\begin{enumerate}[label=\emph{(\alph*})]
\item
$I_1 = n$ and $I_j = 0$ for all $2 \leq j \leq n-1$

\item
$I_1 = 1$ and $I_{n-1} = 1$ ($I_j = 0$ for all $2 \leq j \leq n-2$)
\end{enumerate}

\noindent will force the $4$-manifold $(X,\omega)$ to have basic classes in addition to $\pm K = \mp c_1(X,\omega)$, thus contradicting the hypothesis in Theorem~\ref{thm:sympemb}.
\end{lemma}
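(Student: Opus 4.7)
The plan is to produce a Seiberg--Witten basic class of $X$ distinct from $\pm c_1(X,\omega)$ in each of Cases (a) and (b), directly contradicting $\mathcal{B}as_X = \{\pm c_1(X,\omega)\}$. The construction begins in the rational blow-up $(X',\omega')$: applying Theorem~\ref{thm:genadjimm} to the embedded symplectic sphere $\Sigma_{-1}$ with $p=0$ and $K=c_1(X',\omega')$, the inequality $-2 \geq -1+1$ fails, forcing the alternative $K_1 := c_1(X',\omega') + 2\,PD[\Sigma_{-1}] \in \mathcal{B}as_{X'}$. Since $K_1^2 = c_1^2(X',\omega') = 3\sigma(X')+2\chi(X')$, the dimension hypothesis of Theorem~\ref{thm:FSrbdsw} is satisfied.

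Next I would show that $K_1$ descends to a basic class $\bar K_1 \in \mathcal{B}as_X$. This requires $K_1|_L$ to lie in the order-$n$ subgroup of $H^2(L(n^2,n-1);\Z) = \Z/n^2\Z$ (the image of $H^2(B_n) \to H^2(L)$). The meridian $\mu_1$ of $S_1$ generates $H_1(L)$, with $\mu_k = ((k-1)n + k)\mu_1$ by the plumbing relations, so $PD[\Sigma_{-1}]|_L$ corresponds under Poincar\'e--Lefschetz duality to $\partial[\Sigma_{-1}\cap X^\circ]$, which equals $n\mu_1$ in Case (a) and $\mu_1+\mu_{n-1} = (n^2-n)\mu_1 \equiv -n\mu_1$ in Case (b); both lie in the order-$n$ subgroup. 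Theorem~\ref{thm:FSrbdsw} then yields $SW_X(\bar K_1) = SW_{X'}(K_1) \neq 0$. To exclude $\bar K_1 = c_1(X,\omega)$, I observe that equality would force $2\,PD[\Sigma_{-1}]$ to restrict trivially to $H^2(X^\circ;\Z)$, but its image in $H^2(L;\Z)$ is $\pm 2n\mu_1^* \neq 0$ whenever $n \geq 3$, a contradiction.

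Excluding $\bar K_1 = -c_1(X,\omega)$ is the genuine obstacle. Decomposing $[\Sigma_{-1}] = a+b$ and $PD(c_1(X',\omega')) = c+d$ under the rational Mayer--Vietoris splitting $H_2(X';\Q) = H_2(C_n;\Q) \oplus H_2(X^\circ;\Q)$, the first-row inverse-matrix computation of Step~2 gives $a = -c$ in both cases, so forcing $\bar K_1 = -c_1$ is equivalent to $b+d = 0$; intersection-form arithmetic then pins this to the marginal situation $c_1^2(X,\omega) = n-2$ with $K_1 = -c_1(X',\omega')$ in $X'$. To close this sub-case I would invoke a second basic class obtained by reflection---e.g.\ in Case (b) the symplectic $(-1)$-sphere $\tilde S$ obtained by smoothing the intersection $\Sigma_{-1} \cap S_{n-1}$ yields $K_2 := c_1(X',\omega')+2\,PD[\tilde S]$, differing from $K_1$ by $2\,PD[S_{n-1}]$---and then apply Theorem~\ref{thm:Tmain} to produce a pseudo-holomorphic submanifold $\Sigma'$ whose homology class satisfies $[\Sigma']\cdot[S_j] = 0$ for every $j$. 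Making $J$ simultaneously holomorphic with the $C_n$ configuration via Lemma~\ref{l:jholocn}, positivity of intersection forces $\Sigma'$ to be disjoint from $C_n$, so $\Sigma' \subset X^\circ \subset X$. Then $\omega\cdot[\Sigma'] > 0$ while $c_1(X,\omega)\cdot[\Sigma'] = 0$, contradicting $[c_1(X,\omega)] = -[\omega]$ and completing the argument.
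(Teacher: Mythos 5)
Your first two-thirds reproduce the paper's own argument. The paper likewise extracts the basic class $K_1 = c_1(X',\omega') + 2PD[\Sigma_{-1}]$ from Theorem~\ref{thm:genadjimm} and verifies the order-$n$ condition needed to push it down through the rational blow-down via Theorem~\ref{thm:FSrbdsw}; it does so by exhibiting $-n\delta$ (where $\delta$ is the relative class of the disks $\Sigma_{-1}\cap N(C_n)$) as an explicit integral combination of the $[S_j]$, which is equivalent to your meridian computation $\partial[\Sigma_{-1}\cap X^{\circ}] = n\mu_1$ in case (a) and $-n\mu_1$ in case (b). Your exclusion of $\bar K_1 = c_1(X,\omega)$ via the nonvanishing of $2n\mu_1$ in $\Z/n^2\Z$ is a sharper version of the paper's remark that otherwise $[\Sigma_{-1}]=0$. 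At that point the paper simply concludes that $X$ has at least four basic classes and stops.

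The last third of your proposal, however, does not go through as written. First, the claim that $a=-c$ holds ``in both cases'' fails in case (b): there $a\cdot[S_1]=I_1=1$ while $-c\cdot[S_1]=n\geq 3$, and the intersection form on $H_2(C_n;\Q)$ is nondegenerate, so $a\neq -c$. Second, the reflection step is unsound: with $[\tilde S]=[\Sigma_{-1}]+[S_{n-1}]$ one has $\tfrac{1}{2}(K_2-c_1(X',\omega'))=PD[\tilde S]$ and $[\tilde S]\cdot[S_1]=1$, $[\tilde S]\cdot[S_{n-2}]=1$, $[\tilde S]\cdot[S_{n-1}]=-1$, so the pseudo-holomorphic representative furnished by Theorem~\ref{thm:Tmain} pairs nontrivially with the $C_n$ spheres and cannot be forced off $C_n$ by positivity of intersections; no class $\Sigma'$ with $[\Sigma']\cdot[S_j]=0$ for all $j$ is actually produced, and the final assertion $c_1(X,\omega)\cdot[\Sigma']=0$ is never justified. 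You have correctly isolated a point the paper treats very lightly --- ruling out $\bar K_1=-c_1(X,\omega)$, which your own intersection-form arithmetic shows is only delicate in the borderline case $n=c_1^2(X,\omega)+2$ --- but your argument for that sub-case is broken, so the proposal has a genuine gap precisely where it departs from the paper.
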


\begin{proof}
We begin with looking at the piece of the relative homology long exact sequence for the pair $(C_n,\partial C_n)$:
\begin{equation}
0 \rightarrow H_2(C_n;\Z) \stackrel{i}{\rightarrow} H_2(C_n,\partial C_n ; \Z) \stackrel{\partial}{\rightarrow} H_1(\partial C_n ; \Z) \rightarrow 0
\end{equation}

Let $\delta \in H_2(C_n , \partial C_n ;\Z)$ be a relative class that is a union of the disks $\Sigma_{-1} \cap N(C_n)$, where $N(C_n)$ is a neighborhood of the spheres of the $C_n$. For all $1 \leq j \leq n-1$, we have:
\begin{equation}
\delta \cdot [S_j] = \Sigma_{-1} \cdot [S_j] = I_j \, .
\end{equation}

In case $(a)$, for the class $-n\delta$ we have:
\begin{eqnarray*}
-n\delta \cdot [S_1] & = & -n^2  \\
-n\delta \cdot [S_2] & = & 0   \\
\vdots               & = & \vdots   \\
-n\delta \cdot [S_{n-1}] & = & 0 \, .  
\end{eqnarray*}

\noindent The relative class $-n\delta$ can be supported in the interior by the following homology class:
\begin{equation}
-n\delta = (n-1)[S_1] + (n-2)[S_2] + \cdots + [S_{n-1}]
\end{equation}

\noindent since,
\begin{eqnarray*}
((n-1)[S_1] + (n-2)[S_2] + \cdots + [S_{n-1}]) \cdot [S_1] & = & -n^2  \\
((n-1)[S_1] + (n-2)[S_2] + \cdots + [S_{n-1}]) \cdot [S_2] & = & 0  \\
\vdots     & = & \vdots   \\
((n-1)[S_1] + (n-2)[S_2] + \cdots + [S_{n-1}]) \cdot [S_{n-1}] & = & 0 \, . 
\end{eqnarray*}

In case $(b)$,  for the class $-n\delta$ we have:
\begin{eqnarray*}
-n\delta \cdot [S_1] & = & -n   \\
-n\delta \cdot [S_2] & = & 0   \\
\vdots               & = & \vdots   \\
-n\delta \cdot [S_{n-2}] & = & 0   \\
-n\delta \cdot [S_{n-1}] & = & -n \, . 
\end{eqnarray*}

\noindent In this case, the relative class $-n\delta$ can be supported in the interior by the following homology class:
\begin{equation}
-n\delta = [S_1] + 2[S_2] + \cdots + (n-1)[S_{n-1}]
\end{equation}

\noindent since,
\begin{eqnarray*}
([S_1] + 2[S_2] + \cdots + (n-2)[S_{n-2}] + (n-1)[S_{n-1}]) \cdot [S_1] & = & -n  \\
([S_1] + 2[S_2] + \cdots + (n-2)[S_{n-2}] + (n-1)[S_{n-1}]) \cdot [S_2] & = & 0  \\
\vdots     & = & \vdots  \\
([S_1] + 2[S_2] + \cdots + (n-2)[S_{n-2}] + (n-1)[S_{n-1}]) \cdot [S_{n-2}] & = & 0  \\
([S_1] + 2[S_2] + \cdots + (n-2)[S_{n-2}] + (n-1)[S_{n-1}]) \cdot [S_{n-1}] & = & -n \, . 
\end{eqnarray*}

In both cases $(a)$ and $(b)$ we have the relative class $-n\delta \in im(i) = ker(\partial)$, implying that $\partial (\delta) \in H_1(\partial C_n;\Z) \cong H_1(L(n^2,n-1) ; \Z) \cong \Z / n^2\Z$ is an element of order $n$. According to Theorem~\ref{thm:FSrbdsw} and \cite{Park}), this implies that the basic classes $\pm(c_1(X',\omega') +2[\Sigma_{-1}])$ of $(X',\omega')$ extend to basic classes of the rational blow-down $(X,\omega)$. Note, the classes $\pm c_1(X',\omega')$ must extend to the basic classes of $(X,\omega)$ since they are the $\pm$ the canonical class. Moreover, $c_1(X',\omega') + 2[\Sigma_{-1}]$ and $c_1(X',\omega')$ must extend to different basic classes on $(X,\omega)$, otherwise we would have $[\Sigma_{-1}] = 0$. Therefore, the $4$-manifold $X$ will have at least four basic classes, which is a contradiction.   \end{proof}

\begin{lemma}
\label{l:Ij1}
The following intersection configurations:
\begin{enumerate}[label=\emph{\roman*})]
\item
$I_1 = 1$ and $I_j=1$ for one $j$ such that $2 \leq j \leq n-2$

\item
$I_1 = k$ with $2 \leq k \leq n-1$ and $I_j=1$ for one $j$ such that $2 \leq j \leq n-1$

\end{enumerate}

cannot occur in $(X',\omega')$, since it is a symplectic $4$-manifold with $b^+_2(X') > 1$.
\end{lemma}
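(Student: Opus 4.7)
The plan is to adapt the strategy of Lemma~\ref{l:int12}, producing in each configuration an infinite family of distinct $(-1)$-sphere classes in $X'$. Each such class yields a distinct Seiberg--Witten basic class $c_1(X',\omega')+2[\Sigma]$ (via Theorem~\ref{thm:genadjimm} applied to $[\Sigma]$ with $p=0$, or equivalently via the blow-up formula), and since $b_2^+(X')>1$ forces $\mathcal{B}as_{X'}$ to be finite, this yields the desired contradiction.

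The engine is Proposition~\ref{p:reflection}: given an embedded $(-1)$-sphere $\Sigma$ and an embedded $(-2)$-sphere $S$ with $[\Sigma]\cdot[S]=k\geq 2$, the induced map $R_S\circ R_\Sigma$ acting on the $\mathbb{Z}$-span of $[\Sigma]$ and $[S]$ has trace $2k^2-2>2$ and determinant $1$, hence infinite order; iterating it produces infinitely many distinct $(-1)$-classes, each represented by an embedded $(-1)$-sphere via the composition of the associated reflection diffeomorphisms. The task in each case therefore reduces to exhibiting such an embedded pair of intersection $\geq 2$ in $X'$.

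For configuration (ii), $\Sigma_{-1}$ already meets the $(-n-2)$-sphere $S_1$ in $k\geq 2$ points, but $S_1^2=-(n+2)$ is neither $-1$ nor $-2$, so Proposition~\ref{p:reflection} does not apply to $S_1$ directly. The extra datum $I_{j_0}=1$ is what saves the argument: smoothing the unique intersection of $\Sigma_{-1}$ with $S_{j_0}$ produces an embedded $(-1)$-sphere $\Sigma':=\Sigma_{-1}\# S_{j_0}$ representing $[\Sigma_{-1}]+[S_{j_0}]$, and iterated smoothing along the chain $S_2,\dots,S_{n-1}$ produces further $(-1)$-spheres. Applying the reflection $R_{\Sigma'}$ (now valid) to $[S_1]$ yields a new $(-n-2)$-sphere class with altered intersections along the chain, and a subsequent reflection through one of the $(-2)$-spheres $S_i$ produces a $(-2)$-class $[T]$ for which one can arrange $[T]\cdot[\Sigma']\geq 2$; Lemma~\ref{l:lius} is then invoked to realize $[T]$ by an embedded symplectic $(-2)$-sphere, placing us in the hypothesis of Lemma~\ref{l:int12}. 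Configuration (i) is treated the same way, with the hypothesis $j_0\leq n-2$ ensuring a nontrivial chain on both sides of $S_{j_0}$ so that the iterated smoothing/reflection construction produces the required pair.

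The principal obstacle is the explicit production of the $(-2)$-class $[T]$ with $[T]\cdot[\Sigma']\geq 2$: a direct calculation shows that the $(-2)$-classes naively available from the $C_n$ configuration (the $[S_i]$ for $i\geq 2$ and their chain-sums) all pair with $\Sigma_{-1}$ and with its chain-smoothings $\Sigma'$ to give intersection at most one. The required $[T]$ must therefore be generated by a reflection step that involves $[S_1]$ essentially, and one must then verify via Lemma~\ref{l:lius} (and the Li--Usher perturbation) that the resulting $(-2)$-class is still realized by an embedded symplectic sphere. Once $T$ is in hand, the iterated-reflection bookkeeping concluding the proof is essentially identical to that in Lemma~\ref{l:int12}.
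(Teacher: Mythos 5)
There is a genuine gap: your argument is a plan whose decisive step is never carried out, and that step is exactly where the difficulty of the lemma lies. To run the Lemma~\ref{l:int12} engine you must exhibit an embedded $(-1)$-sphere $\Sigma'$ and an embedded $(-2)$-sphere $T$ with $|[T]\cdot[\Sigma']|\geq 2$. As you yourself compute, every $(-2)$-class available from the $C_n$ chain (the $[S_i]$, $i\geq 2$, and their consecutive sums) meets $\Sigma_{-1}$ and each of its chain-smoothings in at most one point. Your proposed repair --- generating $[T]$ ``by a reflection step that involves $[S_1]$ essentially'' --- is not available: Proposition~\ref{p:reflection} only supplies reflection diffeomorphisms for spheres of square $-1$ or $-2$, and $[S_1]^2=-n-2$, so there is no reflection through $S_1$; moreover, reflections preserve the intersection form, so for instance $R_{\Sigma'}(x)\cdot[\Sigma']=-x\cdot[\Sigma']$, and the classes you obtain by reflecting the $S_i$ through $\Sigma'$ still pair with $\Sigma'$ with absolute value at most one. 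Invoking Lemma~\ref{l:lius} does not close the gap either: that lemma perturbs given $J$-holomorphic curves, it does not realize an abstract homology class of square $-2$ by an embedded symplectic sphere. So the required pair is never produced (and in case (i), where $I_1=1$, it is unclear what would even seed the iteration), hence no contradiction is reached.

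The paper's proof takes an entirely different route that sidesteps this. Smoothing the single intersection of $\Sigma_{-1}$ with $S_j$ and then successively with the remaining spheres of the $(-2)$-chain yields $n-1$ embedded $(-1)$-spheres, and the generalized adjunction formula for immersed spheres (Theorem~\ref{thm:genadjimm}) is applied once for each: for a $(-1)$-sphere $x$ the inequality $-2\geq -1+|x\cdot L|$ is impossible, so each application forces a new basic class $L+2x$, culminating in the explicit basic class $K'=K+2(n-1)[\Sigma_{-1}]+2(n-2)[S_j]+\cdots+2[S_{n-1}]$. A final application of Theorem~\ref{thm:genadjimm} to $S_1$ shows $|K'\cdot[S_1]|=|(2k+1)n-2k-2j|>n$ in the stated ranges of $j$ and $k$, which would force $K'\pm2[S_1]$ to be a basic class; this is ruled out because $(K'\pm2[S_1])^2\neq(K')^2$ and $X'$ is of simple type. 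The contradiction thus comes from simple type applied to one explicitly computed class, not from an infinite family of $(-1)$-classes; to salvage your approach you would have to supply the missing pair $(\Sigma',T)$, and the computations above indicate it does not exist.
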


\begin{proof}

Assume $I_1 = k$ for $1 \leq k \leq n-1$ and let $K = c_1(X',\omega')$ be the (negative of the) canonical class of $(X',\omega')$. Since $S_1$ is a symplectic sphere with self-intersection $(-n-2)$, we have that $K \cdot [S_1] = -n$. Let $L$ be any SW basic class of $X'$, then according to the generalized adjunction formula for immersed spheres, Theorem~\ref{thm:genadjimm}, we have that
\begin{equation}
\label{eq:LS1n}
|L \cdot [S_1]| \leq n
\end{equation}

\noindent or $SW_{X'}(L + 2[S_1]) = SW_{X'}(L)$ if $L \cdot S_1 \geq 0$, ($SW_{X'}(L - 2[S_1]) = SW_{X'}(L)$ if $L \cdot [S_1] \leq 0$). We will produce a specific SW basic class $L$ which will fail to satisfy (\ref{eq:LS1n}) and for which $L \pm 2S_1$ cannot be a SW basic class since the symplectic $4$-manifold $X'$ is of simple type.

First, we observe, that by smoothing out the transverse intersections of the spheres in the $C_n$ configuration and the sphere $\Sigma_{-1}$, we have the following spheres in $X'$, each with self-intersection $(-1)$:
\begin{eqnarray}
\label{eq:1spheres}
&\Sigma_{-1}&  \nonumber \\
&\Sigma_{-1}&\hspace{-.12in}+ S_j \nonumber \\
&\Sigma_{-1}&\hspace{-.12in}+ S_j + S_{j-1} \nonumber \\
&\vdots& \nonumber \\
&\Sigma_{-1}&\hspace{-.12in}+ S_j + S_{j-1} + \cdots + S_2 \nonumber \\
&\Sigma_{-1}&\hspace{-.12in}+ S_j + S_{j-1} + \cdots + S_2 + S_{j+1} \nonumber \\
&\Sigma_{-1}&\hspace{-.12in}+ S_j + S_{j-1} + \cdots + S_2 + S_{j+1} + S_{j+2} \nonumber \\
&\vdots& \nonumber \\
&\Sigma_{-1}&\hspace{-.12in}+ S_j + S_{j-1} + \cdots + S_2 + S_{j+1} + S_{j+2} + \cdots + S_{n-1} .  
\end{eqnarray}
\vspace{.1in}

\noindent Second, using these spheres, we can construct several SW basic classes using Theorem~\ref{thm:genadjimm} as follows: We start off by letting $L=K$ and $x = \Sigma_{-1}$ as in Theorem~\ref{thm:genadjimm}, since $|K \cdot [\Sigma_{-1}]| \leq -1$ cannot happen,   $K+2[\Sigma_{-1}]$ must be a SW basic class. Note, that $K^2 = (K+2[\Sigma_{-1}])^2$, as required for $4$-manifolds of simple type. Next, we let $L=K+2[\Sigma_{-1}]$ and $x = \Sigma_{-1} + S_j$, and after applying Theorem~\ref{thm:genadjimm} again, we get that since $|(K+2[\Sigma_{-1}]) \cdot ([\Sigma_{-1}] + [S_j])| \leq -1$ cannot happen, then $(K + 2[\Sigma_{-1}]) + 2([\Sigma_{-1}] + [S_j])$ is a SW basic class. Proceeding in this manner, with all the spheres of (\ref{eq:1spheres}), we get that $K'$ is a SW basic class of $X'$, where $K'$ is:
\begin{equation}
\begin{split}
K'=K &+ 2(n-1)[\Sigma_{-1}] + 2(n-2)[S_j] + \cdots \\
&+ 2(n-j)[S_2] + 2(n-(j+1))[S_{j+1}] + \cdots + 2[S_{n-1}] .
\end{split}
\end{equation}

Next, we again apply Theorem~\ref{thm:genadjimm} with $L=K'$ and $x=S_1$, and as in (\ref{eq:LS1n}), we get:
\begin{eqnarray}
\label{eq:basicclass}
|K' \cdot [S_1]| & = & |K \cdot [S_1] + 2(n-1)[\Sigma{-1}] \cdot [S_1] + 2(n-j)[S_2] \cdot [S_1]| \nonumber \\
                 & = & |(2k+1)n - 2k -2j| \leq n .
\end{eqnarray}

\noindent If $k=1$, then (\ref{eq:basicclass}) becomes:
\begin{equation}
|2n - 2 - 2j| \leq n \, ,
\end{equation}

\noindent which for $n \geq 4$ and $2 \leq j \leq n-2$ cannot occur. Therefore, $K' + 2[S_1]$ is forced to be a SW basic class, however, this is impossible since $X'$ is of simple type and $(K' + 2[S_1])^2 \neq (K')^2$. Consequently, the configurations with intersection numbers $I_1 = 1$ and $I_j = 1$ for one $j$ for which $2 \leq j \leq n-2$ cannot occur.

If $2 \leq k \leq n-1$, then the inequality (\ref{eq:basicclass}) cannot hold if $2 \leq j \leq n-1$. Therefore, again $K' + 2[S_1]$ must be a SW basic class, but this cannot happen either since $X'$ is of simple type. Consequently, the configurations with the intersection numbers $I_1 = k$ with $2 \leq k \leq n-1$ and $I_j = 1$ for one $j$ for which $2 \leq j \leq n-1$ cannot occur.     \end{proof}

The results in section~\ref{sec:step2} as well as Lemmas~\ref{l:I1n} and \ref{l:Ij1} imply that if $n \geq c_1^2(X,\omega) + 2$, then there cannot be symplectic embeddings of $B_n \hookrightarrow (X,\omega)$ of type $\A$. The only configurations which remain are those with $I_1 = k$ where $2 \leq k \leq n-1$ and $I_j = 0$ for $j$ with $2 \leq j \leq n-1$, which correspond to symplectic embeddings of $B_n \hookrightarrow X$ of type $\E_k$ for $2 \leq k \leq n-1$. Next, we will show that symplectic embeddings $B_n \hookrightarrow (X,\omega)$ of type $\E_k$, $k \geq c_1^2(X,\omega)+2$, cannot occur.

\begin{rmk}
The key difference between symplectic embeddings of $B_n \hookrightarrow X$ of type $\A$ and $\E_k$ is that in the embeddings of type $\E_k$, the sphere $\Sigma_{-1}$ does not intersect any sphere with self-intersection $(-2)$, which as seen in Lemma~\ref{l:Ij1}, creates quite a few Seiberg-Witten basic classes leading to contradictions because of adjunction formulas. Therefore, in order to prevent embeddings of type $\E_k$, $k \geq c_1^2(X,\omega)+2$, we need $c_1^2(X,\omega)$ to be low enough to guarantee the existence of several spheres with self-intersection $(-1)$, in addition to $\Sigma_{-1}$.
\end{rmk}

The next Lemma will be instrumental in showing this last part of Theorem~\ref{thm:sympemb}.

\begin{lemma}
\label{l:crn}
Let $S_r^d \subset (M,\omega)$ be an immersed symplectic sphere with self-inter\-sec\-tion $r$ and $d$ double points, where $(M,\omega)$ is a symplectic $4$-manifold with $c_1^2(M,\omega) \leq -1$ and $b_2^+(M)> 1$. Let $C_n^r \subset (M,\omega)$ be the linear plumbing of symplectic spheres $S_d^r, S_2, S_3, \ldots, S_{n-1}$, where the $S_j$ are embedded symplectic spheres with $[S_j]^2 = -2$ for $2 \leq j \leq n-1$. Then there exists an embedded symplectic sphere $\hat{\Sigma'}_{-1} \subset (M, \omega)$ with $[\hat{\Sigma'}_{-1}]^2 = -1$, and $C_n^{'r} \subset (M,\omega)$, a linear plumbing configuration of symplectic spheres $S_r^d, S'_2, S'_3, \ldots, S'_{n-1}$ (each $S'_i$ is a perturbation of $S_i$), such that if $\hat{\Sigma'}_{-1}$ intersects any spheres in the $C_n^{'r}$ configuration it must do so positively and transversally.
\end{lemma}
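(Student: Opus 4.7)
The plan is to mirror the argument used in the proof of Proposition~\ref{p:sympemb} (Step~1), adapted to a configuration whose ``first'' sphere $S_r^d$ is only immersed rather than embedded. Since $c_1^2(M,\omega) \leq -1$ and $b_2^+(M) > 1$, Corollary~\ref{cor:taubes} supplies, for a generic $\omega$-compatible almost-complex structure $J_\epsilon$, an embedded $J_\epsilon$-holomorphic sphere $\Sigma_{-1}^\epsilon \subset (M,\omega)$ with $[\Sigma_{-1}^\epsilon]^2 = -1$. The goal is to degenerate $J_\epsilon$ to a structure $J$ that makes every sphere of $C_n^r$ pseudo-holomorphic, apply Gromov compactness to $\Sigma_{-1}^\epsilon$, and then straighten everything out with Li--Usher.

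First I would construct an $\omega$-compatible almost-complex structure $J$ on $M$ making all of $S_r^d, S_2, \ldots, S_{n-1}$ $J$-holomorphic. For the embedded $(-2)$-spheres $S_2,\ldots,S_{n-1}$ the argument of Lemma~\ref{l:jholocn} carries over verbatim: shrink Darboux neighborhoods around each intersection point, use the result of \cite{McP} to isotope the spheres so that intersections become symplectically orthogonal, and extend $J$ across each intersection point via Lemma~\ref{l:localortho}. The new feature is the immersed sphere $S_r^d$. Away from its $d$ double points $S_r^d$ is a genuine embedded symplectic surface, so the same construction applies along it. At each double point, the two local sheets of $S_r^d$ are symplectic planes meeting positively and transversally, and the same McDuff-type isotopy makes these two sheets symplectically orthogonal in a Darboux ball about the double point, at which point Lemma~\ref{l:localortho} provides a linear $\omega_0$-compatible $J$ preserving both sheets. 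Patching these local choices with the chosen $J$ on the complement of the intersection and double points yields the desired $\omega$-compatible $J$ for which every component of $C_n^r$ is $J$-holomorphic.

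Next I would take a sequence $J_\epsilon \to J$ of generic $\omega$-compatible almost-complex structures and apply Gromov compactness (Theorem~\ref{thm:gromov}) to the sequence of $J_\epsilon$-holomorphic $(-1)$-spheres $\Sigma_{-1}^\epsilon$. After passing to a subsequence, they converge weakly to a $J$-holomorphic multicurve $u = (u^1, \ldots, u^N)$ with underlying simple components $v^1, \ldots, v^N$ satisfying $\sum_i m_i\, c_1(M) \cdot [v^i] = c_1(M)\cdot [\Sigma_{-1}^\epsilon] = 1$. The combinatorial induction of Proposition~\ref{p:grcomp1} uses only $b_2^+(M) > 1$, the adjunction inequality (\ref{eq:jhadj}), Lemma~\ref{l:ss}, and the generalized adjunction formula, and therefore applies without change, producing one component $\hat\Sigma_{-1} := v^{i_0}$ that is an embedded $J$-holomorphic sphere of self-intersection $(-1)$.

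Finally I would apply the Li--Usher perturbation (Lemma~\ref{l:lius}) to the distinct $J$-holomorphic curves $\{S_r^d, S_2, \ldots, S_{n-1}, \hat\Sigma_{-1}\}$. This yields symplectic surfaces $S_r^d{}', S'_2, \ldots, S'_{n-1}, \hat\Sigma'_{-1}$, $J'$-holomorphic for some $J'$ arbitrarily $C^1$-close to $J$, all of whose pairwise intersections are transverse and positive and with no triple intersections. The plumbing of $S_r^d{}', S'_2, \ldots, S'_{n-1}$ is the required $C_n^{'r}$, and $\hat\Sigma'_{-1}$ is the desired embedded symplectic $(-1)$-sphere. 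The principal technical obstacle is the first step, handling the immersed sphere $S_r^d$: one must check that an $\omega$-compatible (not merely $\omega$-tame) $J$ can be built preserving both local sheets at each double point of $S_r^d$ simultaneously with making the remainder of $C_n^r$ pseudo-holomorphic. Once this is established, Gromov compactness and Li--Usher produce the conclusion as above.
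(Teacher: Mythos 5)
Your proposal follows the paper's proof essentially verbatim: extend Lemma~\ref{l:jholocn} to make $C_n^r$ pseudo-holomorphic (applying Lemma~\ref{l:localortho} also at the double points of $S_r^d$), invoke Corollary~\ref{cor:taubes}, Gromov compactness and Proposition~\ref{p:grcomp1} to extract an embedded $J$-holomorphic $(-1)$-sphere, and finish with Lemma~\ref{l:lius}. The one detail you gloss over is the last step: the Li--Usher perturbation resolves the $d$ double points of $S_r^d$, so your ``$S_r^d{}'$'' is actually an embedded symplectic surface of genus $d$, not the immersed sphere that the statement requires to sit in $C_n^{'r}$ (and that is needed later, when the generalized adjunction formula for immersed \emph{spheres} is applied to it). The paper fixes this by putting the original $S_r^d$ back into the configuration and observing that it differs from its desingularization only in small neighborhoods of the double points, so $\hat{\Sigma'}_{-1}$ (after a possible further perturbation) still meets the configuration positively and transversally; adding that remark closes the only gap in your argument.
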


\begin{proof}
The proof of this lemma mirrors the proof of Proposition~\ref{p:sympemb} in section~\ref{sec:step1}. As in the proof of Proposition~\ref{p:sympemb}, we start by putting an $\omega$-compatible almost-complex structure $J$ on the spheres of the $C_n^r$ configuration. We can do so in the same manner as was done for the $C_n$ configuration in Lemma~\ref{l:jholocn}. The only difference is that we apply Lemma~\ref{l:localortho} to the small Darboux neighborhoods of the double points of the immersed sphere $S_r^d$, as well as to the small Darboux neighborhoods of the intersections between adjacent spheres in the plumbing.

As before, since $c_1^2(M,\omega) \leq -1$, by Corollary~\ref{cor:taubes} of the theorems of Taubes (Theorems~\ref{thm:Tmain} and \ref{thm:Tmin}), there must exist a $J_{\epsilon}$-holomorphic sphere $\hat{\Sigma}_{-1}^{\epsilon}$ in $(M,\omega)$, with $[\hat{\Sigma}_{-1}^{\epsilon}]^2=-1$ for a generic $\omega$-compatible almost-complex structure $J_{\epsilon}$. As in section~\ref{sec:step1}, the spheres of the $C_n^r$ configuration are $J$-holomorphic curves, and the sphere $\hat{\Sigma}_{-1}^{\epsilon}$ is a $J_{\epsilon}$ holomorphic curve. Therefore, we use Gromov Compactness (Theorem~\ref{thm:gromov}), and take a sequence of almost-complex structures $J_{\epsilon} \rightarrow J$ of which there exists a subsequence such that $\hat{\Sigma}_{-1}^{\epsilon}$ converges to a multicurve $\hat{u} = (\hat{u}^1, \ldots , \hat{u}^N)$. We can then apply Proposition~\ref{p:grcomp1}, and conclude that there exists at least one $i$, such that $\hat{u}^i$ is an embedded $J$-holomorphic sphere, which we will label by $\hat{\Sigma}_{-1}$.

Again, as before, we apply Lemma~\ref{l:lius}, to the $J$-holomorphic curves $S_r^d,$ $S_2,$ $S_3,$ $\ldots,$ $S_{n-1},$ $\hat{\Sigma}_{-1}$, and perturb these into symplectic surfaces $\hat{S}_r^d$, $S'_2$, $S'_3$, $\ldots$, $S'_{n-1}$, $\hat{\Sigma'}_{-1}$ which will intersect each other positively and transversally. The symplectic surface $\hat{S}_r^d$ has genus $g(\hat{S}_r^d) = d$, since it was obtained from the immersed sphere $S_r^d$ by smoothing out the double points, see \cite{LiUs}. However, we can replace $\hat{S}_r^d$ back with $S_r^d$, and consider the linear plumbing configuration of spheres $S_r^d, S'_2, S'_3, \ldots, S'_{n-1}$. We can still conclude that the sphere $\hat{\Sigma'}_{-1}$ (after a possible perturbation) intersects positively and transversally with that configuration, since $S_r^d$ differs from $\hat{S}_r^d$ only in small neighborhoods around its double points.  \end{proof}

\begin{prop}
\label{p:bnembedek}
Let $B_n \hookrightarrow (W,\omega)$, where $(W,\omega)$ is a symplectic $4$-manifold with $b_2^+(W) >1$, be an embedding of type $\E_k$, i.e. $I_1 = k$ and $I_j=0$ for $2 \leq j \leq n-1$, for $k \geq c_1^2(W,\omega) + 2$, then $(W,\omega)$ must have SW basic classes in addition to $\pm c_1(W,\omega)$.
\end{prop}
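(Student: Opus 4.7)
The plan is to produce a \emph{second} embedded symplectic $(-1)$-sphere in the rational blow-up of $W$, apply Taubes' theorem there to obtain an extra Seiberg-Witten basic class, and descend it across the rational blow-down via Theorem~\ref{thm:FSrbdsw}.

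To begin, I use Proposition~\ref{p:sympemb} to symplectically rationally blow up the embedding $B_n\hookrightarrow (W,\omega)$. This yields a symplectic manifold $(W',\omega')$ containing the plumbing $C_n$ together with an embedded symplectic $(-1)$-sphere $\Sigma_{-1}$ meeting $S_1$ positively and transversally at $k$ points and disjoint from $S_2,\dots,S_{n-1}$. The type-$\E_k$ hypothesis requires $k\le n-1$ together with $k\ge c_1^2(W,\omega)+2$, which forces $n\ge c_1^2(W,\omega)+3$, and hence $c_1^2(W',\omega')=c_1^2(W,\omega)-(n-1)\le -2$. I then symplectically blow down $\Sigma_{-1}$ via $\pi\colon W'\to W_1$ to obtain $(W_1,\omega_1)$. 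Under $\pi$, the sphere $S_1$ is pushed forward to an immersed symplectic sphere $S_1^{(1)}:=\pi(S_1)$ of self-intersection $k^2-n-2$ with $\binom{k}{2}$ transverse double points, while each $\pi(S_j)$, $j\ge 2$, remains an embedded symplectic $(-2)$-sphere; together they form a linear plumbing $C_n^{\,k^2-n-2}\subset W_1$, and $c_1^2(W_1,\omega_1)=c_1^2(W',\omega')+1\le -1$.

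Since $c_1^2(W_1,\omega_1)\le -1$, Lemma~\ref{l:crn} applies to $(W_1,\omega_1)$ and $C_n^{\,k^2-n-2}$ and produces a new embedded symplectic $(-1)$-sphere $\Sigma^*_{-1}\subset W_1$ whose intersections with a perturbed plumbing $C_n^{\prime\,k^2-n-2}$ are all positive and transverse (possibly empty). Applying the generalized adjunction formula for immersed spheres (Theorem~\ref{thm:genadjimm}) to $\Sigma^*_{-1}$ with $L=c_1(W_1,\omega_1)$ rules out $|L\cdot[\Sigma^*_{-1}]|\le -1$ and therefore forces $K^*:=c_1(W_1,\omega_1)+2[\Sigma^*_{-1}]$ to be a Seiberg-Witten basic class of $W_1$, distinct from $\pm c_1(W_1,\omega_1)$ because $[\Sigma^*_{-1}]^2=-1\ne 0$. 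The blow-up formula, with $W'=W_1\#\overline{\C P^2}$ and exceptional class $E=[\Sigma_{-1}]$, promotes $K^*$ to basic classes $K^*\pm E\in\mathcal{B}as_{W'}$, distinct from $\pm c_1(W',\omega')=\pm(c_1(W_1,\omega_1)-E)$ and from the Taubes basic class $c_1(W',\omega')+2E$ associated to $\Sigma_{-1}$.

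Finally, I descend one of these new basic classes to $W$ via Theorem~\ref{thm:FSrbdsw}: a basic class of $W'$ lifts from a basic class of $W$ precisely when its restriction to $\partial W^\circ\cong L(n^2,n-1)$ lies in the order-$n$ subgroup of $H^2(L(n^2,n-1);\Z)\cong\Z/n^2\Z$. In the clean case where $\Sigma^*_{-1}$ can be taken disjoint from $C_n^{\prime\,k^2-n-2}$ in $W_1$, its generic lift to $W'$ also avoids $\Sigma_{-1}$, so $[\Sigma^*_{-1}]$ is represented in $W^\circ\subset W$ and contributes trivially to $\partial W^\circ$; then $K^*-E=c_1(W',\omega')+2[\Sigma^*_{-1}]$ descends to $c_1(W,\omega)+2[\Sigma^*_{-1}]\in\mathcal{B}as_W$, which is distinct from $\pm c_1(W,\omega)$ because $[\Sigma^*_{-1}]^2=-1$. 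The main obstacle will be the remaining case, where $\Sigma^*_{-1}$ genuinely meets $C_n^{\prime\,k^2-n-2}$: here one must carry out a relative-homology computation on $C_n\subset W'$ analogous to that in the proof of Lemma~\ref{l:I1n} to exhibit a relative class whose boundary has order exactly $n$ in $H_1(L(n^2,n-1);\Z)$, or iterate the adjunction construction of Lemma~\ref{l:Ij1} by applying Theorem~\ref{thm:genadjimm} successively to $\Sigma^*_{-1}$ smoothed with the spheres $S_j^{(1)}$, thereby forcing enough additional basic classes of $W_1$ (and hence of $W'$) that at least one must descend to $W$.
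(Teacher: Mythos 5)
Your opening moves coincide with the paper's: rationally blow up, blow down $\Sigma_{-1}$ to turn $S_1$ into an immersed symplectic sphere of square $k^2-n-2$ with $\binom{k}{2}$ double points, note $c_1^2\leq -1$, and invoke Lemma~\ref{l:crn} to produce a second $(-1)$-sphere. But the proof stops exactly where the real work begins, and the gap you flag yourself (``the remaining case, where $\Sigma^*_{-1}$ genuinely meets'' the configuration) is not a corner case --- it is the generic and essential situation, and neither of the two fixes you gesture at will close it as stated. The key arithmetic you are missing is this: a class of the form $c_1(W',\omega')+2\sum_i PD[\Sigma^{(i)}]$ restricts to an element of order $n$ in $H^2(L(n^2,n-1);\Z)\cong\Z/n^2\Z$ (the condition needed to descend through the rational blow-down, per the remark after Theorem~\ref{thm:FSrbdsw} and the relative-homology computation of Lemma~\ref{l:I1n}) only when the \emph{total} intersection number $\sum_i [\Sigma^{(i)}]\cdot[S_1]$ is divisible by $n$; since the intersection form of $C_n$ has determinant $\pm n^2$ and $\gcd(n,n-1)=1$, a single sphere meeting $S_1$ in $j$ points with $0<j<n$ gives a relative class $\delta$ with $n\delta\notin\operatorname{im}(H_2(C_n))$, so $c_1(W')+2PD[\Sigma^*]$ simply does not descend. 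This is why the paper must \emph{iterate}: it shows (via Lemma~\ref{l:int12} and a blow-down argument) that $\Sigma_{-1}^{(2)}$ meets only the immersed sphere, in $j_2$ points with $1\leq j_2\leq n-k$; if $j_2=n-k$ the two exceptional spheres upstairs meet $S_1$ in exactly $n$ points total and the combined class descends, and otherwise one blows down again, finds $\Sigma_{-1}^{(3)}$, and continues until $k+j_2+\cdots+j_\ell=n$. The hypothesis $k\geq c_1^2(W,\omega)+2$ is calibrated precisely so that $c_1^2$ stays $\leq -1$ through all $\ell-1\leq n-k$ of these blow-downs, keeping Lemma~\ref{l:crn} applicable at every stage; your write-up never uses the hypothesis for this purpose.

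Two further points. First, your proposed alternative of ``iterating the adjunction construction of Lemma~\ref{l:Ij1} by applying Theorem~\ref{thm:genadjimm} successively to $\Sigma^*_{-1}$ smoothed with the spheres $S_j^{(1)}$'' is a dead end: Lemma~\ref{l:int12} forbids $\Sigma^*_{-1}$ from meeting any of the $(-2)$-spheres more than once, and the paper shows it cannot meet them at all (else repeated blow-downs produce a $(-1)$-sphere in a manifold with $c_1^2\geq 1$), so there is nothing to smooth with. Second, your ``clean case'' (where $\Sigma^*_{-1}$ misses the configuration) is shown in the paper to be impossible for the same reason, so while your direct descent argument there is harmless, the entire burden of proof lies in the case you defer. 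As written, the proposal establishes the setup but not the proposition.
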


\begin{proof}
Assume $B_n \hookrightarrow (W,\omega)$ is an embedding of type $\E_k$. This implies that after symplectically rationally blowing up $(W,\omega)$, we obtain $(W',\omega')$ which contains a $C_n$ configuration of symplectic spheres, and a symplectic sphere $\Sigma_{-1}$ which intersects the sphere $S_1$ ($[S_1]^2 = -n-2$) $k$ times positively and transversally. 

We blow down the sphere $\Sigma_{-1}$, and obtain a manifold $(W^{(2)},\omega^{(2)})$, such that $c_1^2(W^{(2)},\omega^{(2)}) = c_1^2(W',\omega') + 1$. The sphere $S_1 \subset W$ descends to an immersed sphere $S_{-n-2+k^2}^{k-tuple}$ which has self-intersection $(-n-2+k^2)$ and a $k$-tuple intersection point. Since the sphere $S_1$ was in fact pseudo-holomorphic, and the blow-down map is holomorphic, the immersed sphere $S_{-n-2+k^2}^{k-tuple}$ is pseudo-holomorphic as well. Therefore, $S_{-n-2+k^2}^{k-tuple}$ can be perturbed to a pseudo-holomorphic sphere with only double point intersections (see \cite{McD1}), of which there will be $\frac{k(k-1)}{2}$ such double points. Consequently, the manifold $(W^{(2)},\omega^{(2)})$ will contain a linear configuration $C_{-n-2+k^2}^{k(k-1)/2}$ of spheres $S_{-n-2+k^2}^{k(k-1)/2},S_2,S_3, \ldots, S_{n-1}$, where $S_{-n-2+k^2}^{k(k-1)/2}$ is an immersed symplectic sphere with self-intersection $r=-n-2+k^2$ and $d=\frac{k(k-1)}{2}$ double points. 

Next, since $k \geq c_1^2(W,\omega) + 2$, we have that $c_1^2(W^{(2)},\omega^{(2)}) \leq -1$, therefore, we can apply Lemma~\ref{l:crn} and obtain an embedded symplectic sphere of self-intersection $(-1)$: $\Sigma_{-1}^{(2)} \subset W^{(2)}$. This sphere $\Sigma_{-1}^{(2)}$ must intersect the configuration $C_{-n-2+k^2}^{k(k-1)/2}$, since if it did not, we could blow up $(W^{(2)},\omega^{(2)})$, obtain $(W',\omega')$ again, rationally blow down and get $(W,\omega)$, which would contain the sphere $\Sigma_{-1}^{(2)}$, a contradiction since $c_1^2(W,\omega) \geq 1$. By Lemma~\ref{l:crn}, $\Sigma_{-1}^{(2)}$ must then intersect the spheres of the $C_{-n-2+k^2}^{k(k-1)/2}$ configuration positively and transversally.

By Lemma~\ref{l:int12}, if $\Sigma_{-1}^{(2)}$ intersects with the spheres $S_j$, $2 \leq j \leq n-1$ and $[S_j]^2 = -2$, then we must have $[\Sigma_{-1}^{(2)}] \cdot [S_j] = 1$. However, if this is the case, then we would be able to blow down repeatedly $(n-2)$ times and end up with a manifold that has a sphere of self-intersection $(-1)$ and $c_1^2 \geq 1$, which is a contradiction. Therefore, $\Sigma_{-1}^{(2)}$ must only intersect with the immersed sphere $S_{-n-2+k^2}^{k(k-1)/2}$.

Since $\Sigma_{-1}^{(2)}$ is a sphere of self-intersection $(-1)$, then $c_1(W^{(2)},\omega^{(2)}) + 2[\Sigma_{-1}^{(2)}]$ is a SW basic class of $W^{(2)}$, by Theorem~\ref{thm:genadjimm}. If we apply Theorem~\ref{thm:genadjimm} to $x = S_{-n-2+k^2}^{k(k-1)/2}$, we obtain:
\begin{equation}
|(c_1(W^{(2)},\omega^{(2)}) + 2[\Sigma_{-1}^{(2)}]) \cdot S_{-n-2+k^2}^{k(k-1)/2}| \leq n-k ,
\end{equation}

\noindent which implies that
\begin{equation}
[S_{-n-2+k^2}^{k(k-1)/2}] \cdot [\Sigma_{-1}^{(2)}] = j_2, \,\,\, 1 \leq j_2 \leq n-k .
\end{equation}

If $j_2 = n-k$, then we could blow up $(W^{(2)},\omega^{(2)})$ and obtain $(W',\omega')$, which would now contain $2$ spheres with self-intersection $(-1)$: $\Sigma_{-1}$ and $\Sigma_{-1}^{(2)}$, where:
\begin{eqnarray*}
\left[\Sigma_{-1}\right] \cdot \left[S_1\right] &=& k  \\
\left[\Sigma_{-1}^{(2)}\right] \cdot \left[S_1\right] &=& n-k .
\end{eqnarray*}

\noindent Since $([\Sigma_{-1}] + [\Sigma_{-1}^{(2)}]) \cdot [S_1] = n$, as in Lemma~\ref{l:I1n}, we can construct a relative class $\delta \in H_2(C_n , \partial C_n ;\Z)$ that is a union of the disks $(\Sigma_{-1} \cup \Sigma_{-1}^{(2)}) \cap N(C_n)$, such that the relative class $-n\delta$ can be supported in the interior by the following homology class:
\begin{equation}
\label{eq:ndelta}
-n\delta = (n-1)[S_1] + (n-2)[S_2] + \cdots + [S_{n-1}].
\end{equation}

\noindent As a result, as in the proof of Lemma~\ref{l:I1n}, the SW basic class $\pm(c_1(W',\omega')$ $ + 2[\Sigma_{-1}] + 2[\Sigma_{-1}^{(2)}])$ will extend to a SW basic class on $(W,\omega)$ after rationally blowing down, forcing $(W,\omega)$ to have basic classes in addition to $\pm c_1(W,\omega)$.

If $j_2 \neq n-k$, then we blow down the sphere $\Sigma_{-1}^{(2)}$ in $(W^{(2)},\omega^{(2)})$, and obtain the manifold $(W^{(3)},\omega^{(3)})$. The sphere $S_{-n-2+k^2}^{k(k-1)/2} \subset (W^{(2)},\omega^{(2)})$ descends to the sphere $S_{-n-2+k^2+j_2^2}^{(k(k-1) + j_2(j_2-1))/2} \subset (W^{(3)},\omega^{(3)})$, (after perturbing the $j_2$-tuple intersection, as done before). Next, since $k \geq c_1^2(W,\omega) + 2$, we have that $c_1^2(W^{(3)},\omega^{(3)}) \leq -1$, therefore, we can apply Lemma~\ref{l:crn} and obtain an embedded symplectic sphere of self-intersection $(-1)$: $\Sigma_{-1}^{(3)} \subset W^{(3)}$. Again, we have that $c_1(W^{(3)},\omega^{(3)})+2[\Sigma_{-1}^{(3)}]$ is a SW basic class of $(W^{(3)},\omega^{(3)})$, thus by Theorem~\ref{thm:genadjimm} we have that:
\begin{equation}
|(c_1(W^{(2)},\omega^{(2)}) + 2[\Sigma_{-1}^{(2)}]) \cdot S_{-n-2+k^2}^{k(k-1)/2}| \leq n-k ,
\end{equation}

\noindent which implies that 
\begin{equation}
[S_{-n-2+k^2+j_2^2}^{(k(k-1) + j_2(j_2-1))/2}] \cdot [\Sigma_{-1}^{(3)}] = j_3, \,\,\, 1 \leq j_2 \leq n-k-j_2 .
\end{equation}

If $j_3 = n-k-j_2$, then we could blow up $(W^{(3)},\omega^{(3)})$ twice and obtain $(W',\omega')$, which would now contain $3$ spheres with self-intersection $(-1)$: $\Sigma_{-1}, \Sigma_{-1}^{(2)}$ and $\Sigma_{-1}^{(3)}$, where:
\begin{eqnarray*}
\left[\Sigma_{-1}\right] \cdot \left[S_1\right] &=& k  \\
\left[\Sigma_{-1}^{(2)}\right] \cdot \left[S_1\right] &=& j_2  \\
\left[\Sigma_{-1}^{(3)}\right] \cdot \left[S_1\right] &=& n-k-j_2   .
\end{eqnarray*}

\noindent Since $([\Sigma_{-1}] + [\Sigma_{-1}^{(2)}]+[\Sigma_{-1}^{(3)}]) \cdot [S_1] = n$, again as in Lemma~\ref{l:I1n}, we can construct a relative class $\delta \in H_2(C_n , \partial C_n ;\Z)$ that is a union of the disks $(\Sigma_{-1} \cup \Sigma_{-1}^{(2)} \cup \Sigma_{-1}^{(3)}) \cap N(C_n)$, such that the relative class $-n\delta$ can be supported in the interior by the same class as before in (\ref{eq:ndelta}). As a result, just as in the proof of Lemma~\ref{l:I1n}, the SW basic class $\pm(c_1(W',\omega') + 2[\Sigma_{-1}] + 2[\Sigma_{-1}^{(2)}]+2[\Sigma_{-1}^{(3)}])$ will extend to a SW basic class on $(W,\omega)$ after rationally blowing down, again forcing $(W,\omega)$ to have basic classes in addition to $\pm c_1(W,\omega)$.

If $j_3 \neq n-k-j_2$, we can repeat the same procedure again, which will again force $(W,\omega)$ to have basic classes in addition to $\pm c_1(W,\omega)$. We can continue this process until it terminates for some ${\ell} \leq n-k$, where we will have a $j_{\ell}$ so that $j_2+j_3+j_4 + \cdots + j_{\ell}=n-k$. As a result, we will obtain the manifold $(W^{(\ell)},\omega^{(\ell)})$, which will have a sphere $S_{-n-2+k^2+j_2^2+\cdots+j_{\ell-1}^2}^{(k(k-1) + j_2(j_2 - 1) + \cdots + j_{\ell - 1}(j_{\ell - 1} - 1))/2}$ that interesects the sphere $\Sigma_{-1}^{(\ell)}$, $(n-k-j_2-j_3-\cdots -j_{\ell-1})$ times. We can then blow up $(W^{(\ell)},\omega^{(\ell)})$ $(\ell-1)$ times, and obtain the manifold $(W',\omega')$ which will have $\ell$ spheres of self-intersection $(-1)$, such that:
\begin{eqnarray*}
\left[\Sigma_{-1}\right] \cdot \left[S_1\right] &=& k  \\
\left[\Sigma_{-1}^{(2)}\right] \cdot \left[S_1\right] &=& j_2  \\
\left[\Sigma_{-1}^{(3)}\right] \cdot \left[S_1\right] &=& j_3  \\
\vdots &=& \vdots  \\
\left[\Sigma_{-1}^{(\ell-1)}\right] \cdot \left[S_1\right] &=& j_{\ell - 1}  \\
\left[\Sigma_{-1}^{(\ell)}\right] \cdot \left[S_1\right] &=& n - k - j_2 - j_3 - \cdots - j_{\ell - 1} = j_{\ell} .
\end{eqnarray*}

\noindent Again, in this case, we will have the SW basic class $\pm(c_1(W',\omega') + 2[\Sigma_{-1}] + 2[\Sigma_{-1}^{(2)}]+2[\Sigma_{-1}^{(3)}] + \cdots + 2[\Sigma_{-1}^{(\ell)}])$ which will extend to a SW basic class on $(W,\omega)$ after rationally blowing down, again forcing $(W,\omega)$ to have basic classes in addition to $\pm c_1(W,\omega)$.

Notice, that we will have to do the greatest number of blow downs if $j_2 = j_3 = \cdots = j_{\ell}=1$, in which case, $\ell = n-k$. Therefore, we require $k \geq c_1^2(W,\omega) + 2$, in order for all the manifolds $(W^{(i)},\omega^{(i)})$ with $1 \leq i \leq \ell$ to have $c_1^2(W^{(i)},\omega^{(i)}) \leq -1$, so that we can apply Lemma~\ref{l:crn} repeatedly.   \end{proof}

From Proposition~\ref{p:bnembedek}, we can see that if $B_n \hookrightarrow (X,\omega)$ is of type $\E_k$, $k \geq c_1(X,\omega)  + 2$, then $(X,\omega)$ must have SW basic classes in addition $\pm c_1(X,\omega)$, which is a contradiction.

\section{Symplectic embeddings of type $\E_2$}
\label{sec:e2}
In this section we will show how to explicitly construct symplectic $4$-manifolds $(X,\omega)$, such that the symplectic embeddings $B_n \hookrightarrow (X,\omega)$ are of type $\E_2$, for $n$ odd. In these constructions $(X,\omega)$ will have $b_2^+(X) > 1$, $n \geq c_1^2(X,\omega) + 2$ and $\mathcal{B}as_X\left\{\pm(c_1(X,\omega)\right\}$. It is not clear however, whether such a construction actually yields a surface of general type or just a symplectic $4$-manifold with said properties. Note, these constructions appear in \cite{Ak}, however, we reinterpret them here for our purposes. First, we introduce the Fintushel and Stern knot surgery construction for $4$-manifolds \cite{FS3,FS4}.

\begin{defn}
\label{def:knotsurg}
Let $T \subset X$ be a homologically non-trivial torus, with self-intersec\-tion $0$, in a $4$-manifold $X$ with $b_2^+(X) > 1$. Let $T \times D^2$ be a tubular neighborhood of $T$ in $X$. Also, let $K \subset S^3$ be a knot, and $N(K)$ be its tubular neighborhood. Then, 
\begin{equation}
X_K = (X \backslash (T \times D^2)) \cup (S^1 \times (S^3 \backslash N(K)))
\end{equation}

\noindent is defined to be the \textit{knot surgery manifold}.
\end{defn}

\noindent Note, the two pieces are attached in such a manner that the homology class $[* \times \partial D^2]$ is identified with $[* \times \lambda]$, where $\lambda$ is the longitude of the knot $K$. In other words, $X_K$ is obtained from $X$ by removing a neighborhood of the torus $T$ and replacing it with $(S^1 \times (S^3 \backslash N(K)))$. The manifold $X_K$ is homotopy equivalent to $X$ (assuming $X$ is simply-connected).

In \cite{FS3}, Fintushel and Stern proved that the Seiberg-Witten invariants of $X_K$ are determined by the Seiberg-Witten invariants of $X$ and the Alexander polynomial of the knot $K$, as long as $T$ has a cusp neighborhood. For the statement of this result, it is convenient to arrange all of the Seiberg-Witten basic classes into a Laurent polynomial as follows:

\begin{defn}
Let $\mathcal{B}as_X = \left\{\pm \beta_1, \ldots, \pm \beta_{m}\right\}$ and $t_{\beta_i} = exp({\beta_i})$ be variables satisfying $t_{\beta_i + \beta_j} = t_{\beta_i} t_{\beta_j}$, then 
\begin{equation}
\mathcal{SW}_X = b_0 + \sum_{i=1}^m b_i(t_{\beta_i} + (-1)^{(\chi(X) + \sigma(X))/4}t_{\beta_i}^{-1})
\end{equation}

\noindent where $b_0 = SW_X(0)$ and $b_i = SW_X(\beta_i)$.
\end{defn}

\begin{example}
Let $X=E(m)$ be the elliptic surface, and $t = exp(T)$, where $T$ is Poincaire dual of the fiber class, then:
\begin{equation}
\mathcal{SW}_{E(m)} = (t - t^{-1})^{m-2} \, .
\end{equation}
\end{example}

\begin{thm}
\label{thm:knotsurg}
Let $T \subset X$ be as above in Definition~\ref{def:knotsurg}. Assume that $T$ lies in a cusp neighborhood in $X$, then:
\begin{equation}
\mathcal{SW}_{X_K} = \mathcal{SW}_X \cdot \Delta_K(t)
\end{equation}

\noindent where $\Delta_K(t)$ is the Alexander polynomial of the knot $K$.
\end{thm}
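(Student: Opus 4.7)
The plan is to follow Fintushel and Stern's original argument, which proceeds by induction on the unknotting number $u(K)$ of $K$ and leverages a skein relation for Seiberg--Witten invariants under torus surgery that matches the Conway skein relation for the Alexander polynomial. For the base case, take $K$ to be the unknot $U$: then $S^3 \setminus N(U) \cong S^1 \times D^2$, so $S^1 \times (S^3 \setminus N(U)) \cong T^2 \times D^2$, and regluing into $X \setminus (T \times D^2)$ in the standard framing gives $X_U = X$. Since $\Delta_U(t) = 1$, the identity $\mathcal{SW}_{X_U} = \mathcal{SW}_X \cdot \Delta_U(t)$ is trivial. To close the induction one works throughout with oriented links and the Conway--Alexander polynomial, specializing to knots at the end.

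For the inductive step, pick a crossing of $K$ whose change decreases $u(K)$ and let $(K_+, K_-, K_0)$ be the Conway triple at that crossing with $K = K_+$. A crossing change corresponds to $\pm 1$-Dehn surgery on a small unknotted circle $c \subset S^3 \setminus N(K)$ encircling the two crossing strands, and the oriented smoothing corresponds to $0$-surgery on the same $c$. Transporting this through the definition of $X_K$, one obtains $X_{K_+}, X_{K_-}, X_{K_0}$ by performing $\pm 1$ or $0$ surgery on the embedded torus $T_c = S^1 \times c \subset X_K$, which has trivial normal bundle.

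The heart of the argument is the torus-surgery formula for Seiberg--Witten invariants (Morgan--Mrowka--Szab\'o), which for $0, +1, -1$ surgery on an embedded torus with trivial normal bundle expresses the three SW polynomials in terms of relative invariants over a $T^3$-neck. The assumption that $T$ lies in a cusp neighborhood is essential: a nodal fiber of the cusp furnishes a vanishing cycle on the bounding $T^3$, which pins down the relative invariants and collapses the general multi-index sum to the single-term identity
\begin{equation}
\mathcal{SW}_{X_{K_+}} - \mathcal{SW}_{X_{K_-}} = (t - t^{-1})\,\mathcal{SW}_{X_{K_0}}
\end{equation}
of Laurent polynomials in $t = t_{[T]}$, which (after the standard symmetrization) is precisely the Conway skein relation $\Delta_{K_+}(t) - \Delta_{K_-}(t) = (t - t^{-1})\Delta_{K_0}(t)$ multiplied by $\mathcal{SW}_X$ once the inductive hypotheses $\mathcal{SW}_{X_{K_-}} = \mathcal{SW}_X \cdot \Delta_{K_-}(t)$ and $\mathcal{SW}_{X_{K_0}} = \mathcal{SW}_X \cdot \Delta_{K_0}(t)$ are substituted. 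This yields $\mathcal{SW}_{X_{K_+}} = \mathcal{SW}_X \cdot \Delta_{K_+}(t)$, completing the induction.

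The main obstacle is establishing the single-term skein identity for SW invariants above. The a priori Morgan--Mrowka--Szab\'o formula for $(0,+1,-1)$ torus surgery involves a sum over all basic classes of $X_{K_0}$ differing by integral multiples of $2[T]$; the real work is to show that the cusp-produced vanishing cycle forces this sum to telescope into multiplication by the single factor $(t - t^{-1})$. This rests on a gauge-theoretic neck-stretching analysis along $T^3$ together with careful control of reducible Seiberg--Witten solutions in the cusp neighborhood; without the cusp hypothesis one would be left with a strictly weaker statement in which the Alexander polynomial would need to be replaced by a correction depending on higher basic classes of $X_{K_0}$.
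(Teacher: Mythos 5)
The paper does not prove this theorem at all: it is quoted verbatim from Fintushel--Stern \cite{FS3} and used as a black box in section~\ref{sec:e2}, so there is no internal proof to compare against. Your outline does track the structure of the original Fintushel--Stern argument — induction on unknotting number, the correspondence between a crossing change (resp.\ oriented smoothing) and $\pm 1$ (resp.\ $0$) surgery on a small circle linking the two strands, and the Morgan--Mrowka--Sz\'ab\'o product formula along $T^3$ feeding a Seiberg--Witten analogue of the Conway skein relation.

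That said, as a proof it has a genuine gap, and it sits exactly where you say the ``heart of the argument'' is. The single-term identity $\mathcal{SW}_{X_{K_+}} - \mathcal{SW}_{X_{K_-}} = (t - t^{-1})\,\mathcal{SW}_{X_{K_0}}$ is the entire analytic content of the theorem: the MMS formula a priori yields an infinite sum of invariants over classes differing by multiples of $2[T]$, and collapsing it requires computing the relative Seiberg--Witten invariants of the pieces along the separating $T^3$, which is precisely where the cusp hypothesis (the existence of a vanishing cycle, hence an essential sphere meeting $T$ once) is used. You name this step but do not carry it out, so the proposal is a correct roadmap rather than a proof. Two further points are glossed: (i) $K_0$ is a two-component link, so $X_{K_0}$ is not a knot-surgery manifold in the sense of Definition~\ref{def:knotsurg}; the induction must be run through the more general link-surgery manifolds and the multivariable Alexander polynomial, and the single-variable skein relation you write (with the factor $t - t^{-1}$ rather than the usual $t^{1/2}-t^{-1/2}$, which reflects a choice of normalization $t = t_{[T]}^{2}$ that should be made explicit) only emerges after specializing variables; and (ii) the $0$-surgered manifold in the MMS triple is not literally $X_{K_0}$ but differs from it by an internal fiber sum that must be accounted for before the inductive hypothesis can be applied to it.
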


\begin{rmk}
If $\Delta_K(t)$ is not monic then $X_K$ cannot admit a symplectic structure, since if $X_K$ is symplectic then we must have $SW_{X_K}(\pm c_1(X_K,\omega)) = \pm 1$. However, if the knot $K$ is fibered, then the \textit{knot surgery manifold} $X_K$ has a symplectic structure \cite{FS3}, since it can be constructed as a symplectic fiber sum \cite{Gompf2}.
\end{rmk}

We will exhibit symplectic $4$-manifolds which have symplectic embeddings $B_n \hookrightarrow X$ of type $\E_2$, by obtaining them from the elliptic surfaces $E(m)$ by knot surgery, blow-ups, and rational blow-down, (these constructions appeared in \cite{Ak}). We will utilize the following Lefschetz fibration of the elliptic surfaces $E(m)$:

\begin{lemma}
\label{l:ellfib}
\cite{Ak} There exists an elliptic Lefschetz fibration on the surface $E(m)$ with a section, a singular fiber F of type $I_{8m}$, $(2m-1)$ singular fibers of type $I_2$ and two additional fishtail fibers.
\end{lemma}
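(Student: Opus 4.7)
The approach will be to construct the desired fibration via a Weierstrass model, choosing polynomials whose discriminant factors in the prescribed pattern, and then verifying that the resulting total space is diffeomorphic to $E(m)$ with the stated singular fibers. Numerically, the configuration is consistent: $8m + 2(2m-1) + 2 = 12m = \chi(E(m))$.

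First I would exhibit polynomials $a(t)$ of degree $4m$ and $b(t)$ of degree $6m$ in $\C[t]$ such that the discriminant
\begin{equation*}
\Delta(t) = -4\, a(t)^3 - 27\, b(t)^2
\end{equation*}
factors as $(t - t_0)^{8m} \prod_{i=1}^{2m-1}(t - s_i)^2 \prod_{j=1}^{2}(t - u_j)$ for distinct points $t_0, s_i, u_j$, and with $j$-invariant non-constant at each root (so that every singular fiber is of multiplicative Kodaira type $I_k$ rather than additive). Existence of such polynomials is a parameter-count: the pair $(a,b)$ lies in an affine space of dimension $10m + 2$, and the prescribed factorization of $\Delta$ cuts out a non-empty locus of the expected dimension. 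For the base case $m=1$, the existence of the configuration $I_8 + I_2 + 2 I_1$ on the rational elliptic surface $E(1) = \C P^2 \# 9 \overline{\C P^2}$ is classical and appears in the Miranda--Persson classification of extremal rational elliptic surfaces; for $m \geq 2$ one can build $(a,b)$ inductively, or exhibit them directly using the extra parameters.

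Next, by Kodaira's classification of singular fibers of elliptic fibrations over $\C P^1$, the minimal resolution of the Weierstrass model is a smooth, simply connected elliptic surface of Euler number $12m$, hence diffeomorphic to $E(m)$. It inherits the zero section of the Weierstrass model. The elliptic fibration can then be promoted to an elliptic Lefschetz fibration with the same singular fiber combinatorics: each $I_k$ fiber is already a cycle of $k$ spheres meeting transversally, and under a small perturbation the $I_k$ singular fiber becomes $k$ nodal (Lefschetz) fibers whose vanishing cycles form a cyclic chain on $T^2$, giving exactly the required Kodaira contribution. By choosing the perturbation so that no additional splittings are introduced at $t_0$, the $s_i$, or the $u_j$, the resulting Lefschetz fibration has precisely one $I_{8m}$ fiber, $(2m-1)$ $I_2$ fibers, and two $I_1$ (fishtail) fibers, with the section persisting under perturbation.

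The main obstacle is producing the Weierstrass data: one must simultaneously control the factorization of $\Delta(t)$ and ensure non-vanishing of the $j$-invariant (equivalently, non-vanishing of $a(t)$) at each singular point, in order to force each singular fiber to be of type $I_k$ rather than of additive type $I_k^*$, $II$, $III$, $IV$, etc. An alternative, purely topological route is to write down a positive Hurwitz-equivalent factorization
\begin{equation*}
(t_{\alpha} t_{\beta})^{6m} \; = \; (t_{\gamma_0})^{8m} \cdot \prod_{i=1}^{2m-1} (t_{\delta_i})^{2} \cdot \prod_{j=1}^{2} t_{\epsilon_{j}}
\end{equation*}
of the standard monodromy of $E(m)$ in the mapping class group $SL(2,\Z)$ of $T^2$, constructed inductively from the $m=1$ case by concatenation and Hurwitz moves; the existence of such a factorization is equivalent to the algebro-geometric existence statement above and is the corresponding combinatorial obstacle in that route. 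The existence of a section is automatic in either approach, being preserved under both small perturbations of holomorphic fibrations and Hurwitz moves.
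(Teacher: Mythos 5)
The paper offers no proof of this lemma: it is imported verbatim from Akhmedov \cite{Ak} (who in turn relies on standard elliptic-fibration technology, cf.\ \cite{HKK}), so there is no in-paper argument to compare yours against. Your consistency check $8m+2(2m-1)+2=12m=\chi(E(m))$ is correct, the two routes you identify (Weierstrass data, or a positive Hurwitz factorization of the global monodromy together with Moishezon's classification of elliptic Lefschetz fibrations to identify the total space as $E(m)$) are the standard ones, and the base case $m=1$ is indeed the Beauville/Miranda--Persson configuration $I_8+I_2+2I_1$ on $E(1)$.

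Two things keep the proposal from being a proof. First, the existence step for $m\geq 2$ --- which you yourself flag as ``the main obstacle'' --- is only asserted. ``Build $(a,b)$ inductively'' hides the entire difficulty: fiber-summing $E(m)$ with $E(1)$ produces disjoint singular fibers $I_{8m}+I_8+\cdots$, not a single $I_{8m+8}$, so one must actually exhibit the Hurwitz moves that merge the two clusters of Dehn twists into a single power $t_\gamma^{8(m+1)}$. On the holomorphic side the situation is even more delicate: by Shioda--Tate the proposed configuration forces $\rho = 2+(8m-1)+(2m-1)=10m=h^{1,1}(E(m))$, so for $m\geq 2$ (where $p_g=m-1>0$) these would be surfaces of maximal Picard number, and a naive parameter count of the locus in the space of Weierstrass data does not establish non-emptiness. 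Second, your perturbation step points in the wrong direction for the way the lemma is used: the paper needs the $I_{8m}$ fiber as an honest singular fiber --- a circular plumbing of $8m$ spheres of square $-2$, which supply the $(-2)$-spheres of the $C_n$ configurations in Proposition~\ref{p:bne2} --- whereas perturbing to a genuine Lefschetz fibration replaces it by $8m$ fishtail fibers and destroys exactly that plumbing. (Also, the monodromy of an $I_k$ fiber is $t_a^k$, a power of a single Dehn twist, so the vanishing cycles of the perturbed fibers are parallel copies of one curve, not a ``cyclic chain'' --- on $T^2$ there is no chain of $k>2$ disjoint nonisotopic essential curves.) The correct reading of the lemma is the unperturbed fibration, with ``Lefschetz'' used loosely to allow $I_k$ fibers.
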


\begin{figure}[ht!]
\labellist
\small\hair 2pt
\pinlabel $\text{fishtail fiber}$ at 40 5
\pinlabel $0$ at 45 25
\pinlabel $I_2\,\text{fiber}$ at 115 5
\pinlabel $-2$ at 100 35
\pinlabel $-2$ at 127 35
\pinlabel $I_{l}\,\text{fiber}$ at 210 5
\pinlabel $-2$ at 192 30
\pinlabel $-2$ at 186 45
\pinlabel $-2$ at 200 60
\pinlabel $-2$ at 236 45
\pinlabel $-2$ at 223 60
\pinlabel $l$ at 230 15
\endlabellist
\centering
\includegraphics[height = 40mm, width = 120mm]{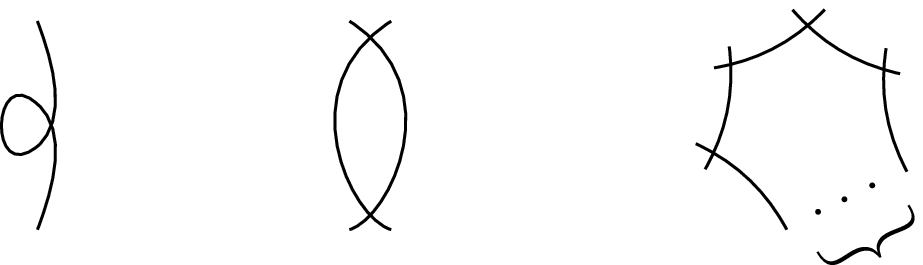}
\caption{Fibers in an elliptic fibration}
\label{f:ellfibers}
\end{figure} 

\noindent Recall, that a singular fiber of type $I_l$ is a plumbing of $l$ spheres of self-intersection $(-2)$ in a circle, and a fishtail fiber is an immersed sphere with one positive double point and self-intersection $0$ (for more on elliptic surfaces and their singular fibers, see \cite{HKK,KiMe}, also see Figure~\ref{f:ellfibers}).

In \cite{FS4}, Fintushel and Stern investigated the consequences of performing the \textit{knot surgery} construction in certain neighborhoods in an elliptic fibration:

\begin{defn}
\cite{FS4} A \textit{double node neighborhood} $D$ is a fibered neighborhood of an elliptic fibration which contains exactly two nodal fibers with the same monodromy.
\end{defn}

\noindent One can perform knot surgery along a regular fiber in such a double node neighborhood, $D$, for example, in a neighborhood of the $I_2$ fiber (see Figure~\ref{f:ellfibers}). The elliptic surface $E(m)$ will have a section $R$, which is a sphere with self-intersection $(-m)$. Fintushel and Stern observed that for a family of knots, the twists knots $T(r)$, if we perform knot surgery in the neighborhood of the $I_2$ singular fiber, then a disk in the section $R$ gets replaced by a Seifert surface of the knot $T(r)$. As a result, the manifold $E(m)_{T(r)}$, will have a ``pseudo-section" $R_s$, which we can think of as an immersed sphere with one double point (since $g(T(r)) = 1$), still having self-intersection $(-m)$ (see \cite{FS4,Ak}). Note, we will use this construction only for the knot $T(1)$, which is the trefoil knot, since we are interested in our $4$-manifolds retaining their symplectic structures. 

Next, we will describe the general construction of a family of such manifolds, similar to the examples above, (again, see \cite{Ak}).

\begin{prop}
\label{p:bne2}
There exists a family of symplectic $4$-manifolds $\X$, with each $(X,\omega) \in \X$ having $b_2^+(X) > 1$, $\mathcal{B}as_X = \left\{\pm c_1(X,\omega)\right\}$ and a symplectic embedding $B_n \hookrightarrow (X,\omega)$ of type $\E_2$, for $n$ odd. Moreover, for all $(X,\omega) \in \X$, the embeddings of $B_n \hookrightarrow (X,\omega)$ are such that $n < 3 + \frac{4}{3}c_1^2(X,\omega)$.
\end{prop}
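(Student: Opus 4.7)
The plan is to construct the family $\X$ by starting from an elliptic surface $E(m)$ with $m \geq 3$ odd, performing Fintushel--Stern knot surgery with the trefoil, blowing up once, and rationally blowing down, following the construction of \cite{Ak}. Set $n := m+2$, which is odd.

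First, take the elliptic fibration on $E(m)$ from Lemma~\ref{l:ellfib}, with section $R$ (a sphere of self-intersection $-m$) meeting a chosen sphere $F_1$ of the $I_{8m}$ fiber transversally at one point. Perform trefoil knot surgery in a double node neighborhood of an $I_2$ fiber; because the trefoil is fibered of genus one, the result $E(m)_{T(1)}$ is symplectic, and as in \cite{FS4, Ak} the section $R$ descends to a pseudo-section $R_s$, which is an immersed symplectic sphere with one positive double point and self-intersection $-m$, still meeting $F_1$ transversally once. Blow up $E(m)_{T(1)}$ at the double point of $R_s$ to obtain $\tilde X$; the proper transform $\tilde{R_s}$ is then an embedded symplectic sphere of self-intersection $-(m+4) = -(n+2)$, while the exceptional $(-1)$-sphere $E_0$ meets $\tilde{R_s}$ transversally at two points and is disjoint from the $I_{8m}$ fiber (the double point lies in the part of $R_s$ affected by the surgery, not at $R \cap F_1$). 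Form a $C_n$ configuration in $\tilde X$ by taking $S_1 := \tilde{R_s}$ together with $S_2, \ldots, S_{n-1}$ a linear subchain of $n-2 = m$ consecutive $(-2)$-spheres from the $I_{8m}$ cycle starting at $F_1$. Symplectically rationally blow down this $C_n$ to produce $(X,\omega)$; then $\tilde X$ is the symplectic rational blow-up of $X$ along $B_n$, and the symplectic $(-1)$-sphere $E_0$ meeting $S_1$ twice (and disjoint from $S_2, \ldots, S_{n-1}$) witnesses that $B_n \hookrightarrow (X,\omega)$ is of type $\E_2$.

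The numerical conditions are then easy. Knot surgery, blow-up, and rational blow-down of a negative-definite configuration each preserve $b_2^+$, so $b_2^+(X) = b_2^+(E(m)) = 2m - 1 > 1$. Moreover $c_1^2(E(m)_{T(1)}) = c_1^2(E(m)) = 0$, the blow-up gives $c_1^2(\tilde X) = -1$, and the rational blow-down of $C_n$ adds $n-1$, yielding $c_1^2(X,\omega) = -1 + (n-1) = m$. Substituting $n = m+2$ and $c_1^2(X,\omega) = m$ into the inequality $n < 3 + \tfrac{4}{3}\, c_1^2(X,\omega)$ gives $m + 2 < 3 + \tfrac{4m}{3}$, i.e.\ $m > -3$, which holds trivially; note also that we have equality $n = c_1^2(X,\omega) + 2$, so Theorem~\ref{thm:sympemb} does not rule out embeddings of type $\E_2$ here.

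The main obstacle is verifying $\mathcal{B}as_X = \{\pm c_1(X,\omega)\}$. By Theorem~\ref{thm:knotsurg}, $\mathcal{SW}_{E(m)_{T(1)}} = (t - t^{-1})^{m-2}(t - 1 + t^{-1})$, where $t = \exp(T)$ and $T$ is the fiber class. The blow-up formula then expresses the basic classes of $\tilde X$ as $\{K_i \pm [E_0]\}$, where $\{K_i\}$ are those of $E(m)_{T(1)}$. To descend to $X$, Theorem~\ref{thm:FSrbdsw} together with the remark following it requires that the restriction of a basic class to $\partial(\tilde X \setminus C_n) \cong L(n^2, n-1)$ lie in the order-$n$ subgroup of $H^2(L(n^2, n-1); \Z) \cong \Z/n^2\Z$. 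The hard step is to pair each candidate basic class with the generators $[S_1], \ldots, [S_{n-1}]$ of $H_2(C_n; \Z)$, compute the induced boundary class, and verify that the only classes passing down are $\pm c_1(X,\omega)$. I would carry this out inductively in $m$, exploiting the fact that $[E_0]$ contributes a boundary term of order divisible by $2$ while the original basic classes $K_i$ of $E(m)_{T(1)}$ restrict trivially to $\partial N(C_n)$ (since they are supported away from $C_n$ up to homology); the order-$n$ constraint then forces the coefficient of $[E_0]$ to take only two admissible values, producing exactly the pair $\pm c_1(X,\omega)$.
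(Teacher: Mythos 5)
Your construction is essentially the paper's own, specialized to a single trefoil knot surgery (the case $s=1$ of the paper's family) and one blow-up, and the topological and numerical bookkeeping is correct: $b_2^+(X)=2m-1$, $c_1^2(X,\omega)=m$, $n=m+2$ odd, and the exceptional sphere meeting the $(-m-4)$-sphere twice does exhibit the embedding as type $\E_2$. Two remarks on scope. First, your family satisfies $n=c_1^2(X,\omega)+2$, so the inequality $n<3+\frac{4}{3}c_1^2(X,\omega)$ holds only vacuously; the paper lets the number $s$ of knot surgeries range up to $2m-1$ and also absorbs one or two blown-up fishtail fibers into the configuration, pushing $n$ up to $8m+1$ against $c_1^2\approx\frac{25}{4}k$, which is where the constant $\frac{4}{3}$ actually comes from. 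Your examples meet the letter of the statement but not the near-sharpness it is meant to document. Second, your stated Seiberg--Witten polynomial $(t-t^{-1})^{m-2}(t-1+t^{-1})$ follows the paper's loosely stated Theorem~\ref{thm:knotsurg} literally; the correct formula involves $\Delta_K(t^2)$, so the top basic class of $E(m)_{T(1)}$ is $mT$ (equal to its canonical class), not $(m-1)T$ --- the latter is not even characteristic, and the parity bookkeeping below depends on getting this right.

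The genuine gap is the verification of $\mathcal{B}as_X=\left\{\pm c_1(X,\omega)\right\}$, which you explicitly defer and whose sketch contains two errors. The basic classes of $E(m)_{T(1)}$ are multiples $aT$ of the fiber class, and these do \emph{not} restrict trivially to $\partial N(C_n)$: the pseudo-section meets a generic fiber once, so $\langle aT,[S_1]\rangle=a\neq 0$. The computation that actually works pairs $aT+bE_0$ (with $b=\pm1$ forced by the blow-up formula) against $[S_1]$ to get $a+2b$, with all other pairings zero; the order-$n$ condition then reads $a+2b\equiv 0\ (\mathrm{mod}\ m+2)$, and since the admissible $a$ are odd (this is where $m$ odd enters) and $|a|\leq m$, only $a=\pm m$, $b=\pm1$ survive, i.e.\ exactly $\pm(mT+E_0)=\mp c_1(\tilde X)$. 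So it is the coefficient of $T$, not of $E_0$, that the constraint pins down --- the opposite of what your sketch asserts. Separately, Theorem~\ref{thm:FSrbdsw} only identifies which basic classes of $\tilde X$ pass down to $X$; as the paper's own remark emphasizes, it does not reconstruct $\mathcal{B}as_X$ from above, so to conclude that $X$ has \emph{no} basic classes other than $\pm c_1(X,\omega)$ one must invoke Park's full determination of the Seiberg--Witten invariants of rational blow-downs, which is precisely what the paper cites at this point. Without that input your argument shows only that $\pm c_1(X,\omega)$ are among the basic classes of $X$, not that they are the only ones.
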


\begin{proof}

\begin{figure}[ht!]
\labellist
\small\hair 2pt
\pinlabel $F_1$ at 35 15
\pinlabel $0$ at 20 5
\pinlabel $R_s$ at 65 60
\pinlabel $-m$ at 80 43
\pinlabel $F_2$ at 242 15
\pinlabel $0$ at 258 5
\pinlabel $I_{8m}$ at 345 30
\pinlabel $-2$ at 282 23
\pinlabel $-2$ at 275 43
\pinlabel $-2$ at 290 70
\pinlabel $-2$ at 332 48
\pinlabel $-2$ at 318 70
\pinlabel $8m$ at 325 0
\endlabellist
\centering
\includegraphics[height = 25mm, width = 120mm]{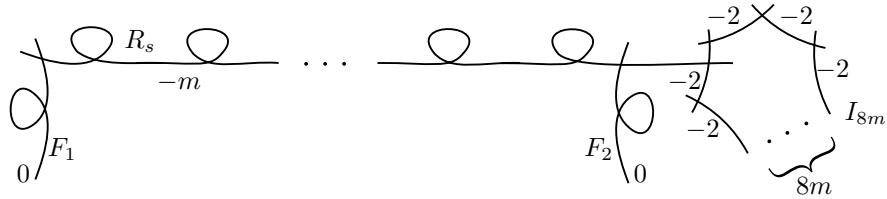}
\caption{Pseudo-section $R_s$ with fibers in $E(m)_{K_1, \ldots, K_s}$}
\label{f:ellex1b}
\end{figure} 

First, we take the elliptic surface $E(m)$, $m > 2$, which has a section $R$, a sphere of self-intersection $(-m)$, and perform knot surgery in the double node neighborhoods of $s$ of the $I_2$ fibers, obtaining the manifold $E(m)_{K_1, \ldots, K_s}$, where $1 \leq s \leq 2m-1$ and $K_i$ are copies of the trefoil knot. We now obtain a ``pseudo-section" $R_s$ (see \cite{FS4,Ak}) of  $E(m)_{K_1, \ldots, K_s}$, which is an immersed sphere with self-intersection $(-m)$ and $s$ double points (see Figure~\ref{f:ellex1b}).

\begin{figure}[ht!]
\labellist
\small\hair 2pt
\pinlabel $F_1$ at 60 15
\pinlabel $0$ at 45 5
\pinlabel $S_{-m-4s}$ at 30 60
\pinlabel $-m-4s$ at 30 45
\pinlabel $F_2$ at 228 15
\pinlabel $0$ at 244 5
\pinlabel $E_1$ at 72 40
\pinlabel $E_2$ at 115 40
\pinlabel $E_{s-1}$ at 170 40
\pinlabel $E_s$ at 220 40
\pinlabel $-1$ at 82 68
\pinlabel $-1$ at 125 68
\pinlabel $-1$ at 180 68
\pinlabel $-1$ at 227 68
\pinlabel $I_{8m}$ at 327 30
\pinlabel $-2$ at 264 23
\pinlabel $-2$ at 257 43
\pinlabel $-2$ at 272 70
\pinlabel $-2$ at 314 48
\pinlabel $-2$ at 300 70
\pinlabel $8m$ at 307 0
\endlabellist
\centering
\includegraphics[height = 25mm, width = 120mm]{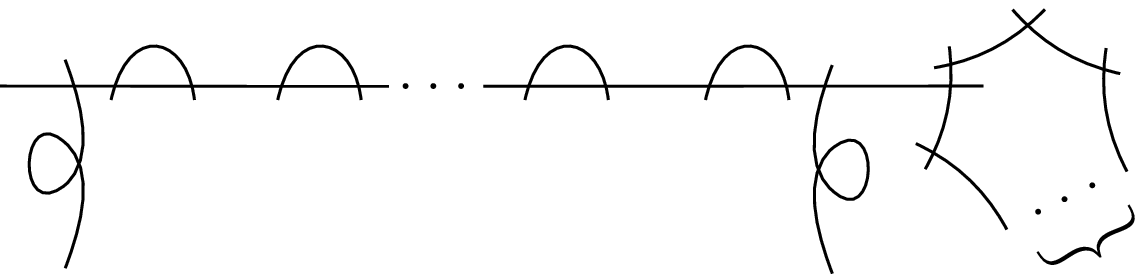}
\caption{ $E(m)_{K_1, \ldots, K_s}\# s\overline{\C P^2}$}
\label{f:ellex2b}
\end{figure} 

\begin{figure}[ht!]
\labellist
\small\hair 2pt
\pinlabel $E_{s+1}$ at 67 30
\pinlabel $-1$ at 38 25
\pinlabel $S_{-4}^{F_1}$ at 67 5
\pinlabel $-4$ at 45 5
\pinlabel $S_{-m-4s-2}$ at 25 60
\pinlabel $-m-4s-2$ at 25 45
\pinlabel $F_2$ at 226 15
\pinlabel $0$ at 242 5
\pinlabel $E_1$ at 75 40
\pinlabel $E_2$ at 118 40
\pinlabel $E_{s-1}$ at 170 40
\pinlabel $E_s$ at 215 40
\pinlabel $-1$ at 82 68
\pinlabel $-1$ at 125 68
\pinlabel $-1$ at 177 68
\pinlabel $-1$ at 227 68
\pinlabel $I_{8m}$ at 322 30
\pinlabel $-2$ at 259 23
\pinlabel $-2$ at 252 43
\pinlabel $-2$ at 267 70
\pinlabel $-2$ at 314 48
\pinlabel $-2$ at 300 70
\pinlabel $8m$ at 307 0
\endlabellist
\centering
\includegraphics[height = 25mm, width = 120mm]{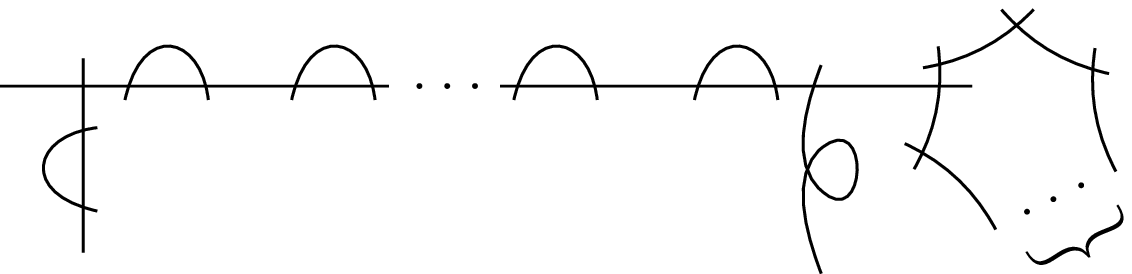}
\caption{ $E(m)_{K_1, \ldots, K_s}\# (s+1)\overline{\C P^2}$}
\label{f:ellex3b}
\end{figure} 

We can blow up $s$ times, so that $R_s$ becomes the embedded sphere $S_{-m-4s}$ (self-intersection $(-m-4s)$) in $E(m)_{K_1, \ldots, K_s} \# s \overline{\C P^2}$. Additionally, in $E(m)_{K_1, \ldots, K_s} \# s \overline{\C P^2}$ we will have $s$ exceptional spheres $E_1, \ldots, E_s$, with $[E_i]^2 = -1$, each of which intersects the sphere $S_{-m-4s}$ twice (see Figure~\ref{f:ellex2b}). In the fibration of $E(m)$, we also have two additional fishtail fibers, $F_1$ and $F_2$ (Lemma~\ref{l:ellfib}), which intersect the ``pseudo-section" $R_s$ once. Therefore, we can blow up $E(m)_{K_1, \ldots, K_s}$ $(s+1)$ times (at the double points of $R_s$ and the fishtail fiber $F_1$), and after smoothing out the transverse intersection, obtain a sphere $S_{-m-4s-2}$ in $E(m)_{K_1, \ldots, K_s} \# (s+1) \overline{\C P^2}$, such that $[S_{-m-4s-2}] = [S_{-4}^{F_1}] + [S_{-m-4s}]$ (see Figure~\ref{f:ellex3b}). Likewise, we can blow up $E(m)_{K_1, \ldots, K_s}$ $(s+2)$ times (at the double points of $R_s$ and the fishtail fibers $F_1$ and $F_2$), and after smoothing out the transverse intersections, obtain a sphere $S_{-m-4s-4}$ in $E(m)_{K_1, \ldots, K_s} \# (s+2) \overline{\C P^2}$, such that $[S_{-m-4s-4}] = [S_{-4}^{F_1}] + [S_{-m-4s}] + [S_{-4}^{F_2}]$. (The spheres $S_{-4}^{F_1}$ and $S_{-4}^{F_2}$ are spheres of self-intersection $(-4)$ obtained from blowing up fibers $F_1$ and $F_2$.)

In these three cases, we obtain configurations of $C_{m+4s-2}$, $C_{m+4s}$ and $C_{m+4s+2}$ in
\begin{eqnarray*}
E(m)_{K_1, \ldots, K_s} \hspace{-.1in} &\#& \hspace{-.1in} s \overline{\C P^2}  \\
E(m)_{K_1, \ldots, K_s} \hspace{-.1in} &\#& \hspace{-.1in} (s+1) \overline{\C P^2}  \\
E(m)_{K_1, \ldots, K_s} \hspace{-.1in} &\#& \hspace{-.1in} (s+2) \overline{\C P^2} \, , 
\end{eqnarray*}

\noindent respectively, by taking the spheres $S_{-m-4s}$, $S_{-m-4s-2}$ and $S_{-m-4s-4}$, also respectively, with the spheres of the $I_{8m}$ fiber. Note, this can be done as long we have enough spheres of self-intersection $(-2)$ in the $I_{8m}$ fiber to complete the $C_{m+4s-2}$, $C_{m+4s}$ and $C_{m+4s+2}$ configurations, so we must have $(8m-1) \geq (m+4s)$, $(8m-1) \geq (m+4s-2)$ or $(8m-1) \geq (m+4s-4)$, respectively. We can then rationally blow down these configurations and obtain manifolds $X_{(m+4s-2)}$, $X_{(m+4s)}$ and $X_{(m+4s+2)}$, such that:
\begin{eqnarray*}
B_{m+4s-2} &\hookrightarrow& X_{(m+4s-2)} \cong RBD(E(m)_{K_1, \ldots, K_s} \# s \overline{\C P^2})  \\
B_{m+4s} &\hookrightarrow& X_{(m+4s)} \cong RBD(E(m)_{K_1, \ldots, K_s} \# (s+1) \overline{\C P^2})  \\
B_{m+4s+2} &\hookrightarrow& X_{(m+4s+2)} \cong RBD(E(m)_{K_1, \ldots, K_s} \# (s+2) \overline{\C P^2}) \, . 
\end{eqnarray*}

In all of these cases, the embeddings of $B_n$ will be symplectic (since we used the trefoil knot in the knot surgery construction) and will be of type $\E_2$ (due to the exceptional spheres $E_i$). Again, if $m$ is odd, then only the top basic classes
\begin{equation*}
\pm(m+2s-2)T + E_1 + E_2 + \cdots + E_{r} 
\end{equation*}

\noindent of $E(m)_{K_1, \ldots, K_s} \# r\overline{\C P^2}$ can extend to the rational blow-down, where $r \in $ $\left\{s, s+1, s+2\right\}$, (this follows from results in \cite{Park}, also see \cite{Ak}). As a result, the manifolds $X_{(m+4s-2)}$, $X_{(m+4s)}$ and $X_{(m+4s+2)}$ will each only have one SW basic class, up to sign.

It is clear from these embeddings of the rational homology balls $B_n$, that if we want higher values of $n$, we are going to have to take higher values of $m$, thus, we need to increase the $b_2^+$. In these constructions, the number $n$ is mainly restricted by the number of spheres of self-intersection $(-2)$ in the $I_{8m}$ fiber which we use to construct the $C_n$ configuration of spheres. Consequently, even if we use all of the $(2m-1)$ of the $I_2$ for our knot surgery construction along with both of the fishtail fibers $F_1$ and $F_2$, and get a sphere $S_{-m-4s-4}$, we may not be able not construct a $C_n$ configuration of spheres with $n=m+4s+2$ if we have $(8m-1) < (n-1)$. For this reason, for each $m$, in order to get the highest possible value for $n$, we may have to use less than the $(2m-1)$ of the $I_2$ fibers in our knot surgery construction. Consequently, the highest $n$ which will work for these constructions is when $n=8m+1$, where we use all the $(8m-1)$ available spheres of the $I_{8m}$ fiber.

If $m = 4k+1$, for $k \geq 1$, then we have:
\begin{equation*}
B_{8m+1} \hookrightarrow X_{(8m+1)} \cong RBD(E(m)_{K_1, \ldots, K_{7k+2}} \# (7k+3) \overline{\C P^2}) \, ,
\end{equation*}

\noindent where $b_2^+(X_{(8m+1)}) = 2m-1$ and $c_1^2(X_{(8m+1)}) = 25k+5$.

If $m = 4k+3$, for $k \geq 1$, then we have:
\begin{equation*}
B_{8m+1} \hookrightarrow X_{(8m+1)} \cong RBD(E(m)_{K_1, \ldots, K_{7k+6}} \# (7k+6) \overline{\C P^2}) \, ,
\end{equation*}

\noindent where $b_2^+(X_{(8m+1)}) = 2m-1$ and $c_1^2(X_{(8m+1)}) = 25k+2$. 

As a result, we can see that as $(\chi_h,c_1^2) \rightarrow \oo$ then $n \rightarrow \oo$ as well. Moreover, in all these examples we have $n < 3 + \frac{4}{3}c_1^2$. If we take $m \geq 5$, we can refine this bound to $n < 3 + \frac{32}{25}c_1^2$. \end{proof}

It is important to note that it is not clear whether the examples in Proposition~\ref{p:bne2} yield surfaces of general type or just symplectic $4$-manifolds. Additionally, one could probably construct embeddings of type $\E_k$ for $k \geq 3$ having the same properties as those of type $\E_2$ in Proposition~\ref{p:bne2}. This might be done by defining the knot surgery construction in double node neighborhoods for fibered knots with higher genus than the trefoil knot.

\end{document}